\title[Non-orthogonal realizations of Coxeter groups]{Non-orthogonal geometric realizations of Coxeter groups}
\author{Fu, Xiang}
\dedicatory{\upshape
School of Mathematics and Statistics\\
University of Sydney, NSW 2006, Australia\\[.5em]
\texttt{xifu9119@mail.usyd.edu.au}\\
\texttt{xiangf@maths.usyd.edu.au}\\[1em]
Preliminary version,
\today
}
\newtheorem{theorem}{Theorem}[section]
\newtheorem{lemma}[theorem]{Lemma}
\newtheorem{proposition}[theorem]{Proposition}
\newtheorem{corollary}[theorem]{Corollary}
\newtheorem{notation}[theorem]{Notation}
\theoremstyle{definition}
\newtheorem{definition}[theorem]{Definition}
\newtheorem{remark}[theorem]{Remark}
\newtheorem{remarks}[theorem]{Remarks}
\numberwithin{equation}{section}
\newcommand{\Z}{\mathbb{Z}}
\newcommand{\N}{\mathbb{N}}
\newcommand{\R}{\mathbb{R}}
\DeclareMathOperator{\PLC}{PLC}
\DeclareMathOperator{\coeff}{coeff}
\DeclareMathOperator{\supp}{supp}
\DeclareMathOperator{\spa}{span}
\DeclareMathOperator{\GL}{GL}
\DeclareMathOperator{\dep}{dp}
\DeclareMathOperator{\ord}{ord}
\DeclareMathOperator{\Sym}{Sym}
\DeclareMathOperator{\hm}{Hom}
\DeclareMathOperator{\Neg}{Neg}
\subjclass[2010]{20F55 (20F10, 20F65)}
\keywords{Coxeter groups, Kac-Moody Lie algebras, root systems, Tits cone}
\begin{document}

\begin{abstract}
We define in an axiomatic fashion a \emph{Coxeter datum} for an arbitrary Coxeter
group $W$. This Coxeter datum will specify a pair of reflection representations
of $W$ in two vector spaces linked only by a bilinear paring without any integrality 
or non-degeneracy requirements. These representations
are not required to be embeddings of $W$ in 
the orthogonal group of any vector space, and they give rise to a pair of inter-related root
systems generalizing the classical root systems of Coxeter groups. We
obtain comparison results between these non-orthogonal
root systems and the classical root systems. Further, we study the equivalent 
of the Tits cone in these non-orthogonal representations, and we show that 
strong results on the geometry in such cones can be obtained.  
\end{abstract}

\maketitle

\section{Introduction}

The notion of \emph{root systems} is an essential tool in the study of Coxeter groups.
For an arbitrary Coxeter system $(W, R)$ in the sense of \cite{BN68} and \cite{HM}, its
root system $\Phi$ is a set of vectors arising from the \emph{Tits representation} of $W$
as a group generated by reflections with respect to some hyperplanes in a real vector space
$V$. Here the representation space $V$ is equipped with a symmetric bilinear form and has 
dimension equal to the cardinality of $R$, and $\Phi$ consists of a set of representative
normal vectors for the reflecting hyperplanes. The elements of $\Phi$ are called \emph{roots}, 
and those roots corresponding to elements of $R$ are known as \emph{simple roots}.  
Following the convention of \cite{DK94} and \cite{DK09}, we also use the 
term \emph{root basis} for the set of simple roots. Under this classical construction, 
the abstract Coxeter group $W$ is embedded in the orthogonal group of the chosen bilinear
form on $V$, and hence the resulting reflection representation may be referred to as 
\emph{orthogonal}. 

In \cite[Ch.V, \S 4]{BN68} and \cite[\S 5.3]{HM} simple roots are required to form a basis for $V$, 
and for each pair of simple roots the value taken by the bilinear form is completely determined by the order
of the product of the corresponding reflections. In particular, if the order is infinite, then
the bilinear form takes the value $-1$.

The orthogonal representations studied in \cite{BN68} and \cite{HM} gave 
a beautiful theory of root systems for finite Weyl groups. However, a certain integrality
condition on the bilinear form required there may not be necessarily satisfied in an arbitrary 
Coxeter groups. Furthermore, since reflection subgroups of 
Coxeter groups are themselves Coxeter groups, it seems desirable that the set of roots corresponding
to a reflection subgroup should itself constitute a root system. However, this requires relaxing
the conditions imposed on the simple roots. Specifically, since a proper reflection subgroup may have 
more Coxeter generators than the over group (as illustrated in \cite[Example 5.1]{CH11}),
linear independence of simple roots is not inherited by reflection subsystems. Moreover, 
the property that the bilinear form takes the value $-1$ whenever the product of the 
corresponding reflections has infinite order is not inherited by reflection subsystems either
(as illustrated in \cite[Example 1.1]{CH11}).  
   
The notion of a \emph{non-orthogonal} reflection representation for an infinite Coxeter group
$W$ was put to use in \cite{MP89} to study the Weyl groups of Kac-Moody Lie algebras. A notable
feature of this approach is the construction of a pair of root systems generalizing the classical
notion in two inter-related vector spaces $V_1$ and $V_2$. The resulting reflection representations
are not embeddings of $W$ in any orthogonal group, thus explaining the term ``non-orthogonal''. 
In \cite{MP89}, the vector spaces $V_1$ and $V_2$ are linked by a non-degenerate bilinear form
satisfying an integrality condition, and the resulting root systems are required to form  
integral lattices in $V_1$ and $V_2$ respectively. This approach gives a well developed 
theory of root systems for the Weyl groups of Kac-Moody Lie algebras, but 
it may not apply to general infinite Coxeter groups, as commented in the remark immediately following 
the definition of root bases in \cite[1.1.1]{PR09}.

In this paper, we give a generalization of the non-orthogonal reflection representations studied
in \cite{MP89}. In particular, we remove the non-degeneracy and integrality condition on the 
bilinear form. In this situation, the root systems need not to form integral lattices. Furthermore, 
in contrast to \cite{MP89}, the removal of the the non-degeneracy 
condition on the bilinear pairing implies that the two representation spaces $V_1$
and $V_2$ may not necessarily be identified as algebraic duals of each other.

This paper also aims to give detailed comparisons between the root systems arising from 
non-orthogonal representations and their classical counterparts. 
For general Coxeter groups, in the literature, it appears that little attention were paid 
to comparing the root systems arising from non-orthogonal representations to
their classical counterparts. This paper presents a collection of basic combinatorial and geometric results
in non-orthogonal root systems for general Coxeter groups.  
Specific to the approach taken in this paper, a number of basic results and their proofs, 
and in some cases even the statements, involve new ideas. For instance, some
of the resulting reflection representations have root systems which inevitably 
involve potentially infinitely many different (positive) roots corresponding to 
the same reflection; these appear in neither the Kac-Moody nor orthogonal Coxeter
settings.   
 
A number of studies on non-orthogonal representations of Coxeter groups exist in 
the literature, for example, \cite{BC01}, \cite{BC06} and \cite{PR09}. Our approach
presented here also generalize those. In particular, the paired representation
spaces in those are all required to be algebraic duals of each other, and in 
\cite{BC01} and \cite{BC06} the root bases are required to be linearly independent.
As observed in the classical orthogonal case, the removal of the linear independence 
condition imposed on root bases is essential in developing 
a unified theory of root systems applicable to reflection subgroups, and consequently, 
in this paper, we do not impose the linear independence condition on root bases.
%
 
This paper begins with a section devoted to proving that after the removal of  
a number of strong requirements of a set of root data in the sense of \cite{MP89} 
we can still construct a faithful reflection representation of an abstract Coxeter group 
(Theorem~\ref{th:isomorphism}). Section~\ref{sec:rts&cc} collects a number of basic facts
about non-orthogonal root systems. In particular, the issue with specifying a preferred 
way of expressing roots as linear combinations of simple roots 
(ambiguity may arise, since simple roots are no longer required to be 
linearly independent), and the issue of the possibility
of many (positive) roots corresponding to the same reflection are addressed. 
More interesting facts are then given in Section ~\ref{sec:cwsgr} and 
Section~\ref{sec:nonpos}. In particular, Section~\ref{sec:cwsgr} provides comparison
results (mostly in the form of inequalities) between constants associated 
to non-orthogonal representations and those for a corresponding orthogonal 
representation. These should be of utility in further study of 
non-orthogonal representations by reduction to known results in the
orthogonal setting. Finally, Section~\ref{sec:nonpos} studies 
the equivalent of the \emph{Tits cone} in non-orthogonal 
representations. In particular, we generalize a well known and useful result
of G.~A.~Maxwell on the dual of the Tits cone from special elements 
in the dual of the Tits cone to arbitrary elements (Theorem~\ref{th:Tits}).

\section{Paired reflection representations}
\label{sec:prr}

Let $S$ be an arbitrary set in which each unordered pair $\{s, t\}$ of
elements is assigned an $m_{st} \in \Z \cup \{\infty\}$, subject to the
conditions that $m_{ss} =1$ (for all $s$ in $S$), and $m_{st}\geq 2$ (for all
distinct $s,t$ in $S$). Suppose that $V_1$ and $V_2$ are vector spaces over the
real field $\R$, and suppose that there exists a bilinear map 
$\langle\ ,\ \rangle :V_1 \times V_2 \to \R$ and sets 
$\Pi_1 =\{\ \alpha_s \ | \ s\in S \ \}\subseteq V_1$ and $\Pi_2 =\{\ \beta_s
\ | \ s\in S \ \}\subseteq V_2 $ such that the following conditions
hold:
\begin{itemize}
\item[(C1)]$\langle \alpha_s, \beta_s\rangle =1$, for all $s\in S$; 
\item [(C2)]$\langle \alpha_s, \beta_t \rangle \leq 0$, for all distinct $s,t \in S$;
\item[(C3)]for all $s,\,t\in S$,
$$
\langle \alpha_s, \beta_t \rangle \langle \alpha_t, \beta_s \rangle = 
\begin{cases}
\cos^2 (\pi/m_{st}) & \text{if } m_{st} \neq \infty,\\
 \gamma^2, \text{ for some $\gamma \geq 1$} & \text{if } m_{st}=\infty; 
\end{cases}
$$
\item[(C4)] $\langle \alpha_s, \beta_t \rangle =0 $ if and
only if $\langle \alpha_t, \beta_s \rangle =0$, for all $s, t \in S$;  
\item[(C5)] $\sum_{s\in S}\lambda_s\alpha_s=0$ with $\lambda_s\ge0$ for all~$s$
implies $\lambda_s=0$ for all~$s$, and $\sum_{s\in S}\lambda_s\beta_s=0$ with
$\lambda_s\ge0$ for all $s$ implies $\lambda_s=0$ for all~$s$.
\end{itemize}

Note that (C3) and (C4) together imply that $\langle \alpha_s, \beta_t \rangle$
and $\langle \alpha_t, \beta_s \rangle$ are zero if and only if $m_{st} =2$. We
can also express (C5) more compactly as $0 \notin \PLC(\Pi_1)$ and $0\notin \PLC(\Pi_2)$,
where $\PLC(A)$ (the \textit{positive linear combinations of $A$}) of any
set $A$ is defined to be 
$$
\{\,\sum_{a\in A} \lambda_a a\mid\text{$\lambda_a \geq 0$ 
for all $a\in A$, and $\lambda_{a'}>0$ for some $a' \in A$\,} \}.
$$
\begin{definition}
\label{df:coxdat}
In the above situation, if conditions (C1) to (C5) are satisfied then
we call $\mathscr{C}=(S,V_1,V_2,\Pi_1,\Pi_2,\langle\ ,\ \rangle)$ a \textit{Coxeter datum}.
The $m_{st}$ ($s,t\in S$) are called the \textit{Coxeter parameters\/} of $\mathscr{C}$.
\end{definition}

Following \cite[1.1.1]{PR09}, we call $\mathscr{C}$ \emph{free} if both
$\Pi_1$ and $\Pi_2$ are linearly independent. Throughout this paper,
$\mathscr{C}=(S,V_1,V_2,\Pi_1,\Pi_2,\langle\ ,\ \rangle)$
will be a fixed Coxeter datum with Coxeter parameters~$m_{st}$, unless otherwise stated.  
We stress that, in general, $\mathscr{C}$ is not required to be free.

\begin{remark}
In a Coxeter datum $\mathscr{C}$, if we take $V_2=\hm(V_1, \R)$ and take
$\langle\,,\,\rangle$
to be the natural pairing on $V_1\times \hm(V_1, \R)$, then the conditions (RB1), (RB2)
and (RB3) required in the definition of the \emph{root data} of \cite[1.1.1]{PR09} are
automatically satisfied in $\mathscr{C}$ (though, here we have scaled each element of
$\Pi_1$ and $\Pi_2$ by a factor of $\frac{1}{\sqrt{2}}$). 
Thus the root data in the sense of \cite{PR09} are special cases of  Coxeter
datum defined here. Further, if we assume that $\mathscr{C}$ is free then we recover 
a set of root data as given in \cite{hee91}, whereas if we assume that $\langle\,,\,\rangle$
is non-degenerate and
$\langle \alpha_s, \beta_t\rangle \in \Z$ for all $s, t\in S$ 
then we recover a set of root data as given in \cite{MP89}.
\end{remark}

\begin{definition}
\label{df:blf}
Given a Coxeter datum $\mathscr{C}=(S,V_1,V_2,\Pi_1,\Pi_2,\langle\ ,\ \rangle)$, 
for each $s\in S$, let
$\rho_{V_1}(s)$ and $\rho_{V_2}(s)$ 
be the linear transformations on $V_1$ and $V_2$  defined by
\begin{align*}
               \rho_{V_1}(s)(x)&= x-2\langle x, \beta_s \rangle \alpha_s\\ 
\noalign{\vskip-5pt\hbox{for all $x\in V_1$, and}\vskip-5pt}
               \rho_{V_2}(s)(y)&= y-2\langle \alpha_s , y \rangle \beta_s
\end{align*}
for all $y\in V_2$.
For each $i\in\{1,2\}$  let $R_i(\mathscr{C}) =\{\,\rho_{V_i}(s) \ | \ s\in S\,\}$, 
and let $W_i(\mathscr{C})$ be the subgroup of $\GL(V_i)$ generated by $R_i$.
\end{definition}

For each $s\in S$, it is readily checked that $\rho_{V_1}(s)$ and $\rho_{V_2}(s)$ 
are involutions with $\rho_{V_1}(s)(\alpha_s)=-\alpha_s$
and $\rho_{V_2}(s)(\beta_s)=-\beta_s$. Further, we have: 
\begin{proposition}
\label{pp:p0}
Let $x\in V_1$ and $y\in V_2$. Then for all
$s\in S$,
$$
\langle \rho_{V_1}(s)(x),\rho_{V_2}(s)(y)\rangle=\langle x,y\rangle. 
$$
\qed
\end{proposition}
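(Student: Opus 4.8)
The plan is to verify the identity by direct computation, expanding $\rho_{V_1}(s)(x)$ and $\rho_{V_2}(s)(y)$ via their definitions and using bilinearity of $\langle\ ,\ \rangle$ together with the normalization (C1). First I would write
\[
\langle \rho_{V_1}(s)(x),\rho_{V_2}(s)(y)\rangle
= \bigl\langle x-2\langle x,\beta_s\rangle\alpha_s,\ y-2\langle\alpha_s,y\rangle\beta_s\bigr\rangle,
\]
and then expand the right-hand side using bilinearity into four terms:
\[
\langle x,y\rangle
- 2\langle\alpha_s,y\rangle\langle x,\beta_s\rangle
- 2\langle x,\beta_s\rangle\langle\alpha_s,y\rangle
+ 4\langle x,\beta_s\rangle\langle\alpha_s,y\rangle\langle\alpha_s,\beta_s\rangle.
\]

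Next I would simplify using (C1), which gives $\langle\alpha_s,\beta_s\rangle=1$, so the last term becomes $4\langle x,\beta_s\rangle\langle\alpha_s,y\rangle$. The two middle terms are equal (both equal $-2\langle x,\beta_s\rangle\langle\alpha_s,y\rangle$), so together they contribute $-4\langle x,\beta_s\rangle\langle\alpha_s,y\rangle$, which exactly cancels the last term. What remains is $\langle x,y\rangle$, as desired.

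There is no real obstacle here; the only mild subtlety worth stating explicitly is that each scalar coefficient such as $\langle x,\beta_s\rangle$ and $\langle\alpha_s,y\rangle$ is a real number, so it may be pulled freely out of either slot of the bilinear pairing, and the products of scalars commute. This is precisely why the cross terms coincide and cancellation occurs. Since the computation is short, I would simply present it inline as a single displayed chain of equalities rather than breaking it into steps, and conclude that the pairing is invariant under the diagonal action of each generator $s$, hence — by multiplicativity — under the diagonal action of all of $W_1(\mathscr{C})\times W_2(\mathscr{C})$ restricted to corresponding elements, though only the generator-level statement is needed for the proposition as stated.
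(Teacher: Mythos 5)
Your computation is correct and is exactly the routine expansion the paper has in mind (the paper omits the proof entirely, marking it \qed as an immediate verification from bilinearity and condition (C1)). Nothing further is needed.
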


Though $\mathscr{C}$ lacks freeness in general, nevertheless, 
conditions (C1), (C2) and (C5) of the definition of a Coxeter datum together yield that:

\begin{lemma}
\label{lem:l1}
For each $s\in S$ we have $\alpha_s \notin
\PLC(\Pi_1\setminus\{\alpha_s\})$, and $\beta_s \notin \PLC(\Pi_2\setminus
\{\beta_s\})$. In particular, for distinct $s, t\in S$, the set $\{\alpha_s,
\alpha_t\}$ is linearly independent, and so is $\{\beta_s, \beta_t \}$.
\qed
\end{lemma}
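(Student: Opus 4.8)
The plan is to prove the two assertions in tandem, exploiting the symmetry between $V_1$ and $V_2$; I will argue the statement for $\Pi_1$, and the statement for $\Pi_2$ follows by the identical argument with the roles of $\alpha$ and $\beta$ interchanged. So fix $s \in S$ and suppose, for contradiction, that $\alpha_s \in \PLC(\Pi_1 \setminus \{\alpha_s\})$; that is, $\alpha_s = \sum_{t \neq s} \lambda_t \alpha_t$ with all $\lambda_t \geq 0$ and at least one $\lambda_{t_0} > 0$.

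The key step is to pair this relation with $\beta_s$. By bilinearity, $\langle \alpha_s, \beta_s \rangle = \sum_{t \neq s} \lambda_t \langle \alpha_t, \beta_s \rangle$. The left-hand side equals $1$ by (C1), while on the right-hand side every term $\langle \alpha_t, \beta_s \rangle$ is $\leq 0$ by (C2) (since $t \neq s$) and every $\lambda_t \geq 0$, so the sum is $\leq 0$. This gives $1 \leq 0$, a contradiction. Hence $\alpha_s \notin \PLC(\Pi_1 \setminus \{\alpha_s\})$, and symmetrically $\beta_s \notin \PLC(\Pi_2 \setminus \{\beta_s\})$.

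For the ``in particular'' clause, fix distinct $s, t \in S$ and suppose $a \alpha_s + b \alpha_t = 0$ for some reals $a, b$ not both zero. First, both $a$ and $b$ must be nonzero: if, say, $b = 0$ then $a \alpha_s = 0$ with $a \neq 0$, forcing $\alpha_s = 0$, which contradicts (C5) (taking the coefficient of $\alpha_s$ to be $1$ and all others $0$). So $a, b \neq 0$; rescaling, we may write $\alpha_s = c\, \alpha_t$ for some $c \in \R \setminus \{0\}$. If $c > 0$ this exhibits $\alpha_s \in \PLC(\Pi_1 \setminus \{\alpha_s\})$, contradicting the first part. If $c < 0$ then $\alpha_s + (-c) \alpha_t = 0$ is a nontrivial positive linear combination of $\Pi_1$ equal to $0$, contradicting (C5) directly. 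Either way we reach a contradiction, so $\{\alpha_s, \alpha_t\}$ is linearly independent; the same argument applied to $\Pi_2$ handles $\{\beta_s, \beta_t\}$.

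There is no serious obstacle here: the whole argument is a one-line pairing computation together with a sign bookkeeping for the linear independence clause. The only point requiring a moment's care is making sure, in the two-element linear dependence argument, that one genuinely lands in the scope of either (C5) or the already-established first assertion depending on the sign of the scalar — and noting that (C5) is exactly what rules out the degenerate possibility that a single $\alpha_s$ vanishes.
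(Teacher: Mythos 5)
Your argument is correct and is exactly the one the paper intends: the paper states this lemma without proof, noting only that it follows from (C1), (C2) and (C5), and your pairing of the putative relation with $\beta_s$ (using (C1) for the left side and (C2) for the right) together with the (C5) bookkeeping for the two-element dependence is precisely that argument. No issues.
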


For each $i\in \{1, 2\}$, the above lemma yields that
$\rho_{V_i}(s)\neq\rho_{V_i}(t)$ whenever $s,t\in S$ are distinct. 
The following lemma, readily obtained from direct calculations, summarizes a few useful results:
\begin{lemma}
\label{lem:storder}
(i)\quad Suppose that $s,t \in S$ such that $m_{st} \notin \{1, \infty\}$, and 
let $\theta =\pi/m_{st}$. If $m_{st}\neq 2$ then for each $n\in \N$, 
\begin{align*}
(\rho_{V_1}(s)\rho_{V_1}(t))^n (\alpha_s)
&= \frac{\sin(2n+1)\theta}{\sin\theta}\alpha_s +\frac{-\cos\theta}{\langle
\alpha_t, \beta_s\rangle}\,\frac{\sin(2n\theta)}{\sin\theta} \alpha_t  \\
\noalign{\hbox{and}}
(\rho_{V_2}(s)\rho_{V_2}(t))^n (\beta_s)
&= \frac{\sin(2n+1)\theta}{\sin\theta}\beta_s +\frac{-\cos\theta}{\langle
\alpha_s, \beta_t\rangle}\,\frac{\sin(2n\theta)}{\sin\theta} \beta_t.  
\end{align*}
While if $m_{st}=2$ then for each $n\in \N$, 
$$(\rho_{V_1}(s)\rho_{V_1}(t))^n(\alpha_s) =(-1)^{n}\alpha_s, \quad\text{and }\quad 
(\rho_{V_2}(s)\rho_{V_2}(t))^n(\beta_s) =(-1)^{n}\beta_s.$$

\noindent(ii)\quad Suppose that $s, t \in S$ such that $m_{st} =\infty$.
Define $\theta  =\cosh^{-1}(\gamma)$, where
$\gamma=\sqrt{\langle \alpha_s,\beta_t\rangle\langle \alpha_t,\beta_s\rangle}$.
Then for each $n\in \N$, 
\begin{equation*}
 (\rho_{V_1}(s)\rho_{V_1}(t))^n (\alpha_s)
=
\begin{cases}
 \frac{\sinh(2n+1)\theta}{\sinh \theta}\alpha_s +\frac{-\gamma}{\langle
\alpha_t, \beta_s\rangle}\frac{\sinh(2n\theta)}{\sinh \theta} \alpha_t, \text{ if $\theta\neq 0$}, \\
(2n+1)\alpha_s -\frac{2n\gamma}{\langle \alpha_t, \beta_s\rangle}\alpha_t, \text{ }
\text{ }\text{ }\text{ }\text{ }\text{ }\text{ }\text{ }\text{ }\text{ }\text{ }\,\text{if $\theta=0$};\\
\end{cases}
\end{equation*}
and
\begin{equation*}
(\rho_{V_2}(s)\rho_{V_2}(t))^n (\beta_s)
=
\begin{cases} 
\frac{\sinh(2n+1)\theta}{\sinh \theta}\beta_s +\frac{-\gamma}{\langle
\alpha_s, \beta_t\rangle}\frac{\sinh(2n\theta)}{\sinh \theta} \beta_t, \text{ if $\theta\neq 0$}, \\
(2n+1)\beta_s -\frac{2n\gamma}{\langle \alpha_s, \beta_t\rangle}\beta_t, \text{ }\text{ }\text{ }
\text{ }\text{ }\text{ }\text{ }\text{ }\text{ }\text{ }\text{ }\,\text{if $\theta=0$}.
\end{cases}
\end{equation*}
\qed
\end{lemma}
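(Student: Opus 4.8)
The plan is to reduce everything to the two-dimensional subspace spanned by the two simple roots involved and then apply the standard closed form for the powers of a $2\times 2$ matrix of determinant one. In each case of the statement we have $s\ne t$, so Lemma~\ref{lem:l1} guarantees that $U_1:=\spa\{\alpha_s,\alpha_t\}$ is two-dimensional, and from $\langle\alpha_s,\beta_s\rangle=1$ one checks at once that $U_1$ is stable under both $\rho_{V_1}(s)$ and $\rho_{V_1}(t)$: indeed $\rho_{V_1}(s)(\alpha_s)=-\alpha_s$ and $\rho_{V_1}(s)(\alpha_t)=\alpha_t-2\langle\alpha_t,\beta_s\rangle\alpha_s$, with the symmetric formulas for $\rho_{V_1}(t)$. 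Let $M$ be the matrix of $\rho_{V_1}(s)\rho_{V_1}(t)$ acting on $U_1$ in the ordered basis $(\alpha_s,\alpha_t)$; multiplying the two reflection matrices gives
\[
M=\begin{pmatrix}4\langle\alpha_s,\beta_t\rangle\langle\alpha_t,\beta_s\rangle-1 & 2\langle\alpha_t,\beta_s\rangle\\ -2\langle\alpha_s,\beta_t\rangle & -1\end{pmatrix}.
\]
Hence $\det M=1$, while by (C3) the trace $\operatorname{tr}M=4\langle\alpha_s,\beta_t\rangle\langle\alpha_t,\beta_s\rangle-2$ equals $2\cos2\theta$ when $m_{st}\ne\infty$ (using $\cos^2\theta=\tfrac12(1+\cos2\theta)$ with $\theta=\pi/m_{st}$) and equals $2\cosh2\theta$ when $m_{st}=\infty$ (using $\gamma=\cosh\theta$).

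Since $\det M=1$, the Cayley--Hamilton theorem gives $M^2=(\operatorname{tr}M)M-I$, so $M^n=a_nM-a_{n-1}I$ where $(a_n)$ is the sequence determined by $a_0=0$, $a_1=1$ and $a_{n+1}=(\operatorname{tr}M)a_n-a_{n-1}$. Solving this linear recurrence: $a_n=\sin(2n\theta)/\sin(2\theta)$ if $m_{st}\notin\{2,\infty\}$; $a_n=\sinh(2n\theta)/\sinh(2\theta)$ if $m_{st}=\infty$ and $\theta\ne0$; and $a_n=n$ in the degenerate case $m_{st}=\infty$, $\theta=0$, where $\operatorname{tr}M=2$. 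The case $m_{st}=2$ requires nothing further, since then $\langle\alpha_s,\beta_t\rangle=\langle\alpha_t,\beta_s\rangle=0$ by the remark following (C4), so that $M=-I$ on $U_1$ and $M^n(\alpha_s)=(-1)^n\alpha_s$.

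To conclude I would substitute $M(\alpha_s)=(4\langle\alpha_s,\beta_t\rangle\langle\alpha_t,\beta_s\rangle-1)\alpha_s-2\langle\alpha_s,\beta_t\rangle\alpha_t$ into $M^n(\alpha_s)=a_nM(\alpha_s)-a_{n-1}\alpha_s$ and simplify the two coefficients. When $m_{st}\notin\{2,\infty\}$ the coefficient of $\alpha_s$ is $\bigl(\sin2n\theta\,(1+2\cos2\theta)-\sin(2n-2)\theta\bigr)/\sin2\theta$, which collapses to $\sin(2n+1)\theta/\sin\theta$ on applying $2\sin2n\theta\cos2\theta=\sin(2n+2)\theta+\sin(2n-2)\theta$, then $\sin2n\theta+\sin(2n+2)\theta=2\sin(2n+1)\theta\cos\theta$, and $\sin2\theta=2\sin\theta\cos\theta$; the coefficient of $\alpha_t$ is $-2\langle\alpha_s,\beta_t\rangle\sin2n\theta/\sin2\theta$, and here (C3) is used once more, in the form $\langle\alpha_s,\beta_t\rangle\langle\alpha_t,\beta_s\rangle=\cos^2\theta$, to rewrite this as $\dfrac{-\cos\theta}{\langle\alpha_t,\beta_s\rangle}\cdot\dfrac{\sin2n\theta}{\sin\theta}$. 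The two $m_{st}=\infty$ formulas come out of the identical computation with the corresponding hyperbolic identities, the sub-case $\theta=0$ simply by carrying $a_n=n$, $a_{n-1}=n-1$ through the substitution and using $\langle\alpha_s,\beta_t\rangle=\gamma^2/\langle\alpha_t,\beta_s\rangle$. Finally, the assertions about $V_2$ follow by repeating the whole argument on $U_2:=\spa\{\beta_s,\beta_t\}$: the matrix of $\rho_{V_2}(s)\rho_{V_2}(t)$ in the basis $(\beta_s,\beta_t)$ is obtained from $M$ simply by interchanging $\langle\alpha_s,\beta_t\rangle$ and $\langle\alpha_t,\beta_s\rangle$, which accounts for the appearance of $\langle\alpha_s,\beta_t\rangle$ in place of $\langle\alpha_t,\beta_s\rangle$ in the coefficient of $\beta_t$. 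The entire argument is a computation; the only places needing care are the trigonometric (resp.\ hyperbolic) collapse of the coefficient of $\alpha_s$ (resp.\ $\beta_s$), the appeal to (C3) that puts the other coefficient into the stated shape, and the separate treatment of the two degenerate cases $m_{st}=2$ and $\theta=0$.
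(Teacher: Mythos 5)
Your proof is correct, and since the paper offers no argument beyond the remark that the lemma is ``readily obtained from direct calculations,'' your computation is essentially the intended one. Reducing to the invariant plane $\spa\{\alpha_s,\alpha_t\}$, noting $\det M=1$ and $\operatorname{tr}M=2\cos 2\theta$ (resp.\ $2\cosh 2\theta$), and packaging the induction on $n$ via Cayley--Hamilton and the Chebyshev-type recurrence $a_{n+1}=(\operatorname{tr}M)a_n-a_{n-1}$ is a clean and complete way to carry out that calculation, and your handling of the degenerate cases $m_{st}=2$ and $\theta=0$ and of the $V_2$ side (swapping $\langle\alpha_s,\beta_t\rangle$ and $\langle\alpha_t,\beta_s\rangle$) is accurate.
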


\begin{remarks}
\label{rmk:st} Let $\ord(\rho_{V_i}(s)\rho_{V_i}(t))$ 
denote the order of $\rho_{V_i}(s)\rho_{V_i}(t)$ in $\GL(V_i)$, for each $i\in \{1, 2\}$. Then: 

\noindent\rm{(i)}\quad Lemma \ref{lem:storder} (i) yields that
$\ord(\rho_{V_i}(s)\rho_{V_i}(t))\geq m_{st}$ when $m_{st}\neq \infty$. Indeed, in the subspace 
with basis $\{ \alpha_s,\alpha_t \}$ the following $m_{st}$ elements 
$$
\alpha_s,(\rho_{V_1}(s)\rho_{V_1}(t))\alpha_s,(\rho_{V_1}(s)\rho_{V_1}
(t))^2\alpha_s,\ldots,(\rho_{V_1}(s)\rho_{V_1}(t))^ {m_{st}-1}\alpha_s$$ are all
distinct, and the same  holds in the subspace with basis $\{\beta_s, \beta_t\}$.

\noindent\rm{(ii)}\quad In a similar way as the above, it follows from  Lemma
\ref{lem:storder}~(ii) that 
$\ord(\rho_{V_i}(s)\rho_{V_i}(t))= \infty$ when $m_{st}=\infty$. 

\end{remarks}

The above observations naturally lead to the following:
\begin{proposition}
\label{pp:order}
Suppose that $s, t \in S$. Then for each $i\in \{1, 2\}$,
$$\rho_{V_i}(s)\rho_{V_i}(t) \text{ has order $m_{st}$ in
$\GL(V_i)$}.$$
\end{proposition}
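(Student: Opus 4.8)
The plan is as follows. Two cases are immediate and should be disposed of first: if $s=t$ then $m_{ss}=1$ and $\rho_{V_i}(s)\rho_{V_i}(t)=\rho_{V_i}(s)^2$ is the identity because $\rho_{V_i}(s)$ is an involution; and if $m_{st}=\infty$ then Remarks~\ref{rmk:st}(ii) already gives order~$\infty$. So assume $m:=m_{st}$ is finite with $m\geq 2$, and set $w:=\rho_{V_i}(s)\rho_{V_i}(t)\in\GL(V_i)$. By Remarks~\ref{rmk:st}(i) we already know $\ord(w)\geq m$, so the entire task is to show $w^m=1$ on all of $V_i$. It further suffices to treat $i=1$: interchanging $V_1\leftrightarrow V_2$, $\Pi_1\leftrightarrow\Pi_2$ and replacing $\langle\,,\,\rangle$ by its transpose yields again a Coxeter datum (conditions (C1)--(C5) are symmetric under this swap), and the swap interchanges $\rho_{V_1}$ with $\rho_{V_2}$.

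Next I would localize the problem to the plane $U:=\spa\{\alpha_s,\alpha_t\}\subseteq V_1$. Since $\rho_{V_1}(s)(x)-x\in\R\alpha_s$ and $\rho_{V_1}(t)(x)-x\in\R\alpha_t$ for every $x\in V_1$, both generators, and hence $w$, preserve $U$ and act as the identity on the quotient $V_1/U$. On $U$ itself, Lemma~\ref{lem:storder}(i) applied with $n=m$ and $\theta=\pi/m$ gives $w^m(\alpha_s)=\alpha_s$, since $\sin(2m\theta)=\sin 2\pi=0$ and $\sin((2m+1)\theta)=\sin(2\pi+\theta)=\sin\theta$ (the degenerate case $m=2$ being handled by the second displayed formula of that lemma, where $(-1)^m=1$); applying the same lemma with the roles of $s$ and $t$ exchanged to $w^{-1}=\rho_{V_1}(t)\rho_{V_1}(s)$ gives $w^m(\alpha_t)=\alpha_t$ as well. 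Hence $w^m$ restricts to the identity on $U$.

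To pass from $U$ to all of $V_1$, write $w(x)=x+u$ with $u:=w(x)-x\in U$; an easy induction on $k$ (using that $w$ restricted to $U$ is $w|_U$) gives $w^{k}(x)=x+\bigl(\sum_{j=0}^{k-1}(w|_U)^{j}\bigr)(u)$, so it is enough to prove that $\sum_{j=0}^{m-1}(w|_U)^{j}=0$ as an operator on $U$. Now $w|_U$ has finite order (dividing $m$), hence is diagonalizable over $\C$, and $\det(w|_U)=\det(\rho_{V_1}(s)|_U)\det(\rho_{V_1}(t)|_U)=(-1)(-1)=1$; if $1$ were an eigenvalue of $w|_U$ then, by diagonalizability together with $\det(w|_U)=1$, both eigenvalues would equal $1$, forcing $w|_U=1$ and hence $w(\alpha_s)=\alpha_s$, which contradicts Remarks~\ref{rmk:st}(i) (the listed elements $\alpha_s,\,w(\alpha_s),\ldots$ being distinct for $m\geq 2$). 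Therefore $w|_U-1$ is invertible, and from $(w|_U-1)\sum_{j=0}^{m-1}(w|_U)^{j}=(w|_U)^{m}-1=0$ we get $\sum_{j=0}^{m-1}(w|_U)^{j}=0$. Thus $w^m(x)=x$ for all $x\in V_1$, i.e.\ $w^m=1$, and combined with $\ord(w)\geq m$ this yields $\ord(w)=m$.

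The one step with genuine content is the last one: $w|_U$ is an element of order $m$ of $\GL(U)$ (not diagonalizable over $\R$ when $m\geq 3$), but $w$ on $V_1$ is only ``semisimple modulo $U$'', so one must rule out a surviving transvection component; the determinant/eigenvalue observation that makes $w|_U-1$ invertible is precisely what does this. Everything else is bookkeeping together with the explicit formulas of Lemma~\ref{lem:storder}.
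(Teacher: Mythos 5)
Your proof is correct, and it reaches the conclusion by a somewhat different route than the paper. The paper also reduces to finite $m_{st}\notin\{1,\infty\}$ and also relies on Remark~\ref{rmk:st}(i) for the lower bound, but for the upper bound it fixes an arbitrary $\alpha\in V_1$, writes out the matrix $M$ of $\rho_{V_1}(s)\rho_{V_1}(t)$ on $\R\{\alpha_s,\alpha_t,\alpha\}$, and observes that $M$ has the three distinct eigenvalues $e^{\pm 2\pi i/m_{st}}$ and $1$, hence is diagonalizable with $m_{st}$-th roots of unity as eigenvalues, so $M^{m_{st}}=1$; the case $m_{st}=2$ is handled separately by checking that the two reflections commute. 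You instead split off the plane $U=\spa\{\alpha_s,\alpha_t\}$ once and for all, get $(w|_U)^{m}=1$ from Lemma~\ref{lem:storder}, and kill the transverse (unipotent) part via the operator identity $\sum_{j=0}^{m-1}(w|_U)^j=0$, which you justify by showing $1$ is not an eigenvalue of $w|_U$. The two arguments pivot on the same fact --- the restriction to the plane has no fixed vector --- but yours packages it structurally (plane plus trivial action on $V_1/U$, geometric series) rather than computationally (explicit $3\times3$ characteristic polynomial), treats $m_{st}=2$ uniformly with the general case, and incidentally sidesteps the paper's slight imprecision in treating $\{\alpha_s,\alpha_t,\alpha\}$ as a basis when $\alpha$ might lie in $U$. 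The paper's version has the advantage of being a single self-contained computation with no appeal to diagonalizability of finite-order operators beyond reading off distinct eigenvalues.
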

\begin{proof} We give a proof that $\rho_{V_1}(s)\rho_{V_1}(t)$ has order
$m_{st}$ in $\GL(V_1)$ below, and we stress that the same argument 
will hold for $\rho_{V_2}(s)\rho_{V_2}(t)$ in $\GL(V_2)$. 
Observe that we only need to consider the cases 
when $m_{st}\notin\{1, \infty\}$, for the statement of 
the proposition follows readily from Remark~\ref{rmk:st}~(ii) and the fact that  
each $\rho_{V_1}(s)$, for all $s\in S$, is an involution.
Next let $\alpha \in V_1$ be
arbitrary. Then
\begin{align}
\label{eq:ord}
(\rho_{V_1}(s)\rho_{V_1}(t))(\alpha)
&= \rho_{V_1}(s)(\alpha -2\langle \alpha, \beta_t \rangle \alpha_t)\notag\\
&=\alpha-2\langle \alpha, \beta_s \rangle \alpha_s -2\langle \alpha, \beta_t
\rangle (\alpha_t-2\langle \alpha_t, \beta_s\rangle \alpha_s)\notag\\
&=\alpha + (4\langle \alpha, \beta_t \rangle \langle \alpha_t, \beta_s \rangle
-2\langle \alpha, \beta_s\rangle)\alpha_s -2\langle \alpha, \beta_t\rangle
\alpha_t. 
\end{align}
If $m_{st}=2$, then (\ref{eq:ord}) yields that
$$(\rho_{V_1}(s)\rho_{V_1}(t))(\alpha)=(\rho_{V_1}(t)\rho_{V_1}(s))(\alpha)
=\alpha-2\langle \alpha, \beta_s\rangle \alpha_s-2\langle \alpha, \beta_t\rangle \alpha_t,$$
so that $\rho_{V_1}(s)$ and $\rho_{V_1}(t)$ commute. Hence $\ord(\rho_{V_1}(s)\rho_{V_1}(t))=2$
when $m_{st}=2$, and it remains to check the case when $m_{st}>2$.
Observe that if $\alpha=\alpha_s$ and $\alpha=\alpha_t$, then (\ref{eq:ord}) yields that
\begin{align*}
(\rho_{V_1}(s)\rho_{V_1}(t))(\alpha_s) &=
(4\cos^2(\pi/m_{st})-1)\alpha_s -2\langle \alpha_s, \beta_t\rangle
\alpha_t \\
\noalign{\hbox{and}}
(\rho_{V_1}(s)\rho_{V_1}(t))(\alpha_t) &= 2\langle \alpha_t, \beta_s\rangle
\alpha_s -\alpha_t.
\end{align*}
Therefore the action of $\rho_{V_1}(s)\rho_{V_1}(t)$ on $\R\{\alpha_s,
\alpha_t,\alpha\}$ may be represented by the following matrix $M$:
\begin{equation*}
M=\left(
\begin{array}{ccc}
4\cos^2(\pi/m_{st})-1 & 2\langle \alpha_t, \beta_s \rangle & 4\langle \alpha,
\beta_t\rangle \langle \alpha_t, \beta_s \rangle -2\langle \alpha, \beta_s
\rangle \\
-2\langle \alpha_s, \beta_t \rangle & -1 & -2\langle \alpha, \beta_t \rangle \\
0 & 0 & 1
\end{array} \right).
\end{equation*}
It is readily checked that $M$ has distinct eigenvalues
$e^{i\tfrac{2\pi}{m_{st}}}$, $e^{-i\tfrac{2\pi}{m_{st}}}$ and~$1$. Hence $M$ has
order $m_{st}$, and so $(\rho_{V_1}(s)\rho_{V_1}(t))^{m_{st}} =1$ in
$\GL(V_1)$. Finally, in
view of Remark \ref{rmk:st} (i), it follows that
$\ord(\rho_{V_1}(s)\rho_{V_1}(t))$ is precisely $m_{st}$.
\end{proof}

\begin{remark}
\label{rmk:isomorphism}
Given a Coxeter datum $\mathscr{C}=(\,S, V_1, V_2, \Pi_1, \Pi_2, \langle, \rangle\,)$ 
with Coxeter parameters $m_{st}$ (where $s,t\in S$),
let $(W, R)$ be a Coxeter system in the sense of \cite{HH81} or \cite{HM} with
$R=\{\, r_s\mid s\in S\,\}$ being a set of involutions generating $W$ 
subject only to the condition that  the order of the product $r_s r_t$ is $m_{st}$ whenever $s, t$
are in $S$ with $m_{st}\neq \infty$. Then Proposition~\ref{pp:order} yields
that there are group homomorphisms $f_1\colon W\to W_1(\mathscr{C})$ and
$f_2\colon W\to W_2(\mathscr{C})$ satisfying $f_1(r_s)=\rho_{V_1}(s)$ and
$f_2(r_s)=\rho_{V_2}(s)$ for all $s\in S$.
\end{remark}
The principal result of this section
is the following:
\begin{theorem}
\label{th:isomorphism}
Let $(W, R)$, $f_1$ and $f_2$ be as the above. Then $f_1$ and $f_2$ are
isomorphisms; that is, $(W_1(\mathscr{C}), R_1(\mathscr{C}))$ and 
$(W_2(\mathscr{C}), R_2(\mathscr{C}))$ are both Coxeter systems
isomorphic to $(W, R)$.
\end{theorem}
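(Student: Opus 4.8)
The plan is to establish that $f_1$ (and symmetrically $f_2$) is an isomorphism by producing a standard geometric representation with which to compare. First I would invoke Remark~\ref{rmk:isomorphism} to obtain the surjective homomorphism $f_1\colon W\to W_1(\mathscr{C})$; surjectivity is immediate since $W_1(\mathscr{C})$ is generated by the $\rho_{V_1}(s)=f_1(r_s)$. The content is therefore injectivity. The natural approach is to use the fact that the \emph{classical} Tits representation of $(W,R)$ on a space $V$ with basis $\{e_s\mid s\in S\}$ and symmetric bilinear form given by $B(e_s,e_t)=-\cos(\pi/m_{st})$ (with $B(e_s,e_t)\le -1$, say $=-1$, when $m_{st}=\infty$) is faithful; this is the theorem of Tits (see \cite{BN68} or \cite{HM}). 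So it suffices to relate $W_1(\mathscr{C})$ to this classical representation.

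The key step is a change-of-variables argument inside each rank-two subspace. For distinct $s,t\in S$, condition (C3) gives $\langle\alpha_s,\beta_t\rangle\langle\alpha_t,\beta_s\rangle=\cos^2(\pi/m_{st})$ (or $\gamma^2\ge 1$), and by (C2) and (C4) both factors are $\le 0$ and vanish together. So I can write $\langle\alpha_s,\beta_t\rangle=-c_{st}$ and $\langle\alpha_t,\beta_s\rangle=-c_{ts}$ with $c_{st},c_{ts}\ge 0$ and $c_{st}c_{ts}=\cos^2(\pi/m_{st})$. The point is that the matrix of $\rho_{V_1}(s)\rho_{V_1}(t)$ on $\R\{\alpha_s,\alpha_t\}$, computed from \eqref{eq:ord}, depends on $\langle\alpha_s,\beta_t\rangle$ and $\langle\alpha_t,\beta_s\rangle$ only through their product (this is exactly why Proposition~\ref{pp:order} and Lemma~\ref{lem:storder} come out the way they do). Concretely, rescaling $\alpha_t\mapsto \sqrt{c_{ts}/c_{st}}\,\alpha_t$ conjugates the pair $(\rho_{V_1}(s),\rho_{V_1}(t))$ into the pair of reflections in the symmetric geometric form where $B(\alpha_s,\alpha_t)=-\sqrt{c_{st}c_{ts}}=-\cos(\pi/m_{st})$. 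Since such a rescaling can be done globally on $V_1$ is \emph{not} automatic when $\Pi_1$ is linearly dependent, the cleaner route is: for any $w\in\ker f_1$, reduce to showing $w=1$ by running Tits' solution to the word problem / the standard ``deletion'' argument, using that on each rank-two subspace $\rho_{V_1}(s)\rho_{V_1}(t)$ behaves exactly as in the classical dihedral case (Lemma~\ref{lem:storder}, Remarks~\ref{rmk:st}).

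More precisely, the cleanest self-contained argument is via the \emph{exchange/strong exchange condition} dressed up geometrically, à la the proof in \cite{HM}: define a ``sign'' or a set of positive vectors and show that if $w=s_1\cdots s_k$ is reduced then $\rho_{V_1}(s_1)\cdots\rho_{V_1}(s_k)$ sends $\alpha_{s_k}$ to a negative element of $\mathrm{PLC}(\Pi_1)$, which by (C5) ($0\notin\mathrm{PLC}(\Pi_1)$) forces the product to be nontrivial. The induction is driven entirely by the rank-two computation in Lemma~\ref{lem:storder}, together with Lemma~\ref{lem:l1} which guarantees $\alpha_s\notin\mathrm{PLC}(\Pi_1\setminus\{\alpha_s\})$, so the ``positive'' and ``negative'' cones behave as they should. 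Once $f_1$ is shown injective it is an isomorphism of abstract groups carrying $r_s$ to $\rho_{V_1}(s)$; since the $\rho_{V_1}(s)$ are distinct involutions (noted after Lemma~\ref{lem:l1}) and $\rho_{V_1}(s)\rho_{V_1}(t)$ has order $m_{st}$ (Proposition~\ref{pp:order}), the pair $(W_1(\mathscr{C}),R_1(\mathscr{C}))$ satisfies the Coxeter presentation and is thus a Coxeter system isomorphic to $(W,R)$. The argument for $f_2$ is verbatim the same with $\Pi_2,\beta_s$ in place of $\Pi_1,\alpha_s$, as flagged in the proof of Proposition~\ref{pp:order}.

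The main obstacle I anticipate is the lack of freeness: because $\Pi_1$ need not be linearly independent, one cannot simply transport the classical faithfulness theorem by a linear isomorphism $V\to V_1$ sending $e_s\mapsto\alpha_s$. The fix is that faithfulness of the reflection action only needs the combinatorial structure of the positive cone $\mathrm{PLC}(\Pi_1)$ and the rank-two subdynamics, both of which are controlled by (C1)--(C5) and Lemma~\ref{lem:storder}; so one runs Tits' ``roots and the fundamental chamber'' argument directly in $V_1$ rather than pulling it back. The only delicate point in doing so is verifying that when a reduced word is lengthened by a generator $s$, the image of $\alpha_s$ stays inside $\mathrm{PLC}(\Pi_1)$ (resp.\ its negative), where one must be careful that coefficients genuinely stay nonnegative — this is where (C2) (nonpositivity of the off-diagonal pairings) and the explicit nonnegativity of the trigonometric/hyperbolic coefficients $\sin(2n\theta)/\sin\theta$, $\sinh(2n\theta)/\sinh\theta$ in Lemma~\ref{lem:storder} for $0\le n< m_{st}$ do the work.
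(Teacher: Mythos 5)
Your proposal is correct and, once you discard the abandoned rescaling detour, follows essentially the same route as the paper: surjectivity is immediate, and injectivity is obtained by running the standard Tits-style positivity argument directly in $V_1$ — namely that $\ell(wr_s)\ge\ell(w)$ forces $w\alpha_s\in\PLC(\Pi_1)$ (the paper's Proposition~\ref{pp:l&p}, proved by descent into rank-two parabolics using the nonnegative coefficients from Lemma~\ref{lem:storder} and Lemma~\ref{lem:l1}), after which a nontrivial kernel element would give $0=\alpha_s+w'\alpha_s\in\PLC(\Pi_1)$, contradicting (C5). The delicate points you flag (lack of freeness, nonnegativity of the rank-two coefficients) are exactly the ones the paper's proof addresses.
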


For each $i\in \{1, 2\}$, since $W_i(\mathscr{C})$  is generated by the elements
of $R_i(\mathscr{C})$, it follows readily that each $f_i$ is surjective. 
Thus  only the injectivity of 
$f_i$  needs to be checked. Before we can do so, a few elementary 
results are needed. First we have a result easily obtained from the formulas 
in Lemma~\ref{lem:storder}:

\begin{lemma}
\label{lem:coeffpos}
Suppose that $s,t \in S$, and let $n$ be an integer such that $0\le n<m_{st}$.
Let $\lambda_n$, $\mu_n$, $\lambda'_n$ and $\mu'_n$ be constants such that
\begin{align*}
&\underbrace{\cdots \rho_{V_1}(t)\rho_{V_1}(s)\rho_{V_1}(t)}
_\text{$n$ factors } (\alpha_s) = \lambda_n \alpha_s + \mu_n \alpha_t \\
\noalign{\hbox{and}}
&\underbrace{\cdots \rho_{V_1}(s)\rho_{V_1}(t)\rho_{V_1}(s)}
_\text{$n$ factors } (\alpha_t) = \lambda'_n \alpha_s + \mu'_n \alpha_t. 
\end{align*}
Then all four constants $\lambda_n$, $\mu_n$, $\lambda'_n$ and $\mu'_n$ are 
non-negative.
\qed
\end{lemma}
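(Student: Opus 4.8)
The plan is to extract the four constants directly from the closed formulas of Lemma~\ref{lem:storder}, split the argument according to the parity of $n$, and observe that the bound $0\le n<m_{st}$ is precisely what forces the arguments of the sines (or $\sinh$'s) that occur to stay in the range where they are non-negative.

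First I would clear away the degenerate cases: if $s=t$ then $m_{st}=1$, so $n=0$ and $w(\alpha_s)=\alpha_s$; and if $m_{st}=2$ then only $n\in\{0,1\}$ occurs, while $\langle\alpha_s,\beta_t\rangle=\langle\alpha_t,\beta_s\rangle=0$ by (C3) and (C4), so each relevant word fixes $\alpha_s$ (resp.\ $\alpha_t$). So assume $m_{st}\ge3$; the case $m_{st}=\infty$ is treated in exactly the same way with $\sin,\cos$ replaced by $\sinh,\cosh$ (and the sub-case $\theta=0$ handled by the same algebra applied to the linear formulas of Lemma~\ref{lem:storder}(ii), using $\gamma^2=\langle\alpha_s,\beta_t\rangle\langle\alpha_t,\beta_s\rangle$). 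Put $\theta=\pi/m_{st}\in(0,\pi/3]$, so $\sin\theta>0$ and $\cos\theta>0$; by (C2) and (C3) the quantities $-\langle\alpha_s,\beta_t\rangle$ and $-\langle\alpha_t,\beta_s\rangle$ are strictly positive, with product $\cos^2\theta$.

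For even $n=2k$ we have $\underbrace{\cdots\rho_{V_1}(t)\rho_{V_1}(s)\rho_{V_1}(t)}_{n}=(\rho_{V_1}(s)\rho_{V_1}(t))^k$, so Lemma~\ref{lem:storder}(i) gives $\lambda_n=\dfrac{\sin(2k+1)\theta}{\sin\theta}$ and $\mu_n=\dfrac{-\cos\theta}{\langle\alpha_t,\beta_s\rangle}\cdot\dfrac{\sin(2k\theta)}{\sin\theta}$; since $2k<m_{st}$ forces $2k+1\le m_{st}$, both $(2k+1)\theta$ and $2k\theta$ lie in $[0,\pi]$, so both sines are $\ge0$, and combined with $\cos\theta>0$, $\langle\alpha_t,\beta_s\rangle<0$ this yields $\lambda_n,\mu_n\ge0$. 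For odd $n=2k+1$ I would use $\underbrace{\cdots\rho_{V_1}(t)\rho_{V_1}(s)\rho_{V_1}(t)}_{n}=\rho_{V_1}(t)(\rho_{V_1}(s)\rho_{V_1}(t))^k$ and apply $\rho_{V_1}(t)$ to the vector just computed, using $\rho_{V_1}(t)(\alpha_s)=\alpha_s-2\langle\alpha_s,\beta_t\rangle\alpha_t$ and $\rho_{V_1}(t)(\alpha_t)=-\alpha_t$. The coefficient of $\alpha_s$ is again $\sin(2k+1)\theta/\sin\theta\ge0$, while the coefficient of $\alpha_t$ is
\[
-2\langle\alpha_s,\beta_t\rangle\,\frac{\sin(2k+1)\theta}{\sin\theta}+\frac{\cos\theta}{\langle\alpha_t,\beta_s\rangle}\,\frac{\sin(2k\theta)}{\sin\theta}.
\]
Now I would use (C3), in the form $\cos\theta/\langle\alpha_t,\beta_s\rangle=\langle\alpha_s,\beta_t\rangle/\cos\theta$, together with $2\sin(2k+1)\theta\cos\theta=\sin(2k+2)\theta+\sin(2k\theta)$, to rewrite this coefficient as $\dfrac{-\langle\alpha_s,\beta_t\rangle}{\sin\theta\cos\theta}\,\sin(2k+2)\theta$, which is $\ge0$ because $2k+2=n+1\le m_{st}$ keeps $\sin(2k+2)\theta\ge0$.

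Finally, $\underbrace{\cdots\rho_{V_1}(s)\rho_{V_1}(t)\rho_{V_1}(s)}_{n}(\alpha_t)$ is obtained from $\underbrace{\cdots\rho_{V_1}(t)\rho_{V_1}(s)\rho_{V_1}(t)}_{n}(\alpha_s)$ by interchanging $s$ and $t$ throughout; since (C1)--(C5) and the equality $m_{st}=m_{ts}$ are symmetric in $s$ and $t$, the non-negativity of $\lambda'_n,\mu'_n$ follows from that of $\lambda_n,\mu_n$. I expect the odd case to be the one that needs care: there the straightforward expansion presents the coefficient of $\alpha_t$ as a difference of a non-negative and a non-positive term, and only after invoking (C3) and the sine (resp.\ $\sinh$) addition formula does it collapse to a single manifestly non-negative term — and keeping the bookkeeping so that every $\sin(j\theta)$ appearing has $0\le j\le m_{st}$ is exactly where the hypothesis $n<m_{st}$ is used.
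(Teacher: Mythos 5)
Your proof is correct and follows exactly the route the paper intends: the paper states Lemma~\ref{lem:coeffpos} as ``easily obtained from the formulas in Lemma~\ref{lem:storder}'' and gives no further detail, and your sign analysis of those closed formulas (even case read off directly, odd case collapsed via (C3) and the product-to-sum identity to $\frac{-\langle\alpha_s,\beta_t\rangle}{\sin\theta\cos\theta}\sin(n+1)\theta$, with the hypothesis $n<m_{st}$ keeping all arguments in $[0,\pi]$) supplies precisely the verification being waved at. The $s\leftrightarrow t$ symmetry for $\lambda'_n,\mu'_n$ and your handling of the degenerate and infinite cases are all sound.
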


\begin{remark}
The same argument applies equally well if we replace in the above lemma,
respectively,
$\rho_{V_1}(s)$, $\rho_{V_1}(t)$, $\alpha_s$ and $\alpha_t$ by $\rho_{V_2}(s)$,
$\rho_{V_2}(t)$, $\beta_s$ and $\beta_t$.
\end{remark}

Let $W$, $f_1$ and $f_2$ be as in Remark \ref{rmk:isomorphism}.
Then $f_1$ and $f_2$ give rise to $W$-actions on $V_1$ and $V_2$ in the
following way: $wx =(f_1 (w))(x)$ for all $w\in W$ and $x\in V_1$, and $wy
=(f_2(w))(y)$ for all $w\in W$ and $y\in V_2$.
Let $\ell\colon W\to \N$ be the \emph{length} function of $W$ with respect to
$R$. For $w\in W$  we say that an expression of the form 
$w=r_{s_1}\cdots r_{s_l}$ (where $s_1, \ldots s_l\in S$) is \emph{reduced} if
$\ell(w)=l$. For any $w\in W$, an easy induction on $\ell(w)$ 
yields the following extension to Proposition \ref{pp:p0}:
\begin{lemma}
\label{lem:eqf}
Given a Coxeter datum $\mathscr{C}=(\,S, V_1, V_2, \Pi_1, \Pi_2, \langle,\rangle\,)$,
and let $W$ be as in in Remark \ref{rmk:isomorphism}. Then
$\langle x, y \rangle = \langle wx, wy
\rangle$ for
all $w\in W$, $x\in V_1$, and $y\in V_2$.
\qed
\end{lemma}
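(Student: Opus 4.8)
The plan is to prove this by induction on the length $\ell(w)$, with Proposition~\ref{pp:p0} supplying the single-reflection case that we then propagate along any expression for $w$. The base case $\ell(w)=0$ is immediate: here $w=1$ acts as the identity on both $V_1$ and $V_2$, so $wx=x$ and $wy=y$, whence $\langle wx,wy\rangle=\langle x,y\rangle$ trivially.

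For the inductive step, suppose $\ell(w)=l\ge 1$ and that the claim holds for all elements of length less than $l$. Choose $s\in S$ and $w'\in W$ with $w=r_s w'$ and $\ell(w')=l-1$. The crucial point is that the $W$-actions on $V_1$ and $V_2$ are defined through the group homomorphisms $f_1$ and $f_2$ of Remark~\ref{rmk:isomorphism}; since $f_1$ and $f_2$ respect products, we have
$$
wx=\bigl(f_1(r_s)f_1(w')\bigr)(x)=\rho_{V_1}(s)\bigl(w'x\bigr)
\quad\text{and}\quad
wy=\bigl(f_2(r_s)f_2(w')\bigr)(y)=\rho_{V_2}(s)\bigl(w'y\bigr).
$$
Applying Proposition~\ref{pp:p0} with $w'x\in V_1$ and $w'y\in V_2$ in place of $x$ and $y$ then gives
$$
\langle wx,wy\rangle
=\bigl\langle \rho_{V_1}(s)(w'x),\,\rho_{V_2}(s)(w'y)\bigr\rangle
=\langle w'x,w'y\rangle,
$$
and the inductive hypothesis applied to $w'$ yields $\langle w'x,w'y\rangle=\langle x,y\rangle$. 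Combining the two displayed equalities completes the induction.

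I expect no serious obstacle here; the only thing requiring care is the bookkeeping that lets the single-step invariance of Proposition~\ref{pp:p0} pass up to arbitrary $w$. This hinges entirely on the fact that $wx$ and $wy$ are defined via the \emph{homomorphisms} $f_1$ and $f_2$, so that peeling off a leftmost generator $r_s$ from $w$ corresponds exactly to applying $\rho_{V_1}(s)$ on the $V_1$ side and $\rho_{V_2}(s)$ on the $V_2$ side \emph{simultaneously}. It is precisely this synchronized action of the same abstract generator on both spaces that makes Proposition~\ref{pp:p0} applicable at each step, and it is the reason the result is stated for the paired action rather than for either representation in isolation. Note also that reducedness of the chosen expression is not actually needed: any factorization $w=r_s w'$ works, and choosing $\ell(w')=\ell(w)-1$ merely guarantees the induction terminates.
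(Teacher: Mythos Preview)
Your proof is correct and matches the paper's approach exactly: the paper simply states that the lemma follows from ``an easy induction on $\ell(w)$'' extending Proposition~\ref{pp:p0}, and your argument supplies precisely those details.
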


\begin{proposition}
\label{pp:l&p}
Let $W$ be as the above, and let $w\in W$ and $s\in S$. 
If $\ell(wr_s)\geq \ell(w)$ then 
$w \alpha_s \in \PLC(\Pi_1)$.
\end{proposition}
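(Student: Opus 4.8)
The plan is to proceed by induction on $\ell(w)$, following the classical exchange-condition argument adapted to the non-orthogonal setting. The base case $\ell(w)=0$ is immediate, since $w\alpha_s=\alpha_s\in\PLC(\Pi_1)$. For the inductive step, assume $\ell(w)\ge 1$ and pick $t\in S$ with $\ell(wr_t)<\ell(w)$, so that $\ell(w)=\ell(wr_t)+1$; write $w=w'r_t$ with $\ell(w')=\ell(w)-1$. The first dichotomy to handle is whether $s=t$: if $s=t$, then $wr_s=w'$ and $\ell(wr_s)=\ell(w)-1<\ell(w)$, contradicting the hypothesis, so necessarily $s\ne t$. Thus $s\ne t$ throughout, and we may work inside the dihedral parabolic generated by $r_s$ and $r_t$.

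The key reduction is the standard one: consider the coset $w\langle r_s,r_t\rangle$ and let $u$ be its minimal-length representative, so $w=uv$ with $v\in\langle r_s,r_t\rangle$, $\ell(w)=\ell(u)+\ell(v)$, and $\ell(ur_s)>\ell(u)$, $\ell(ur_t)>\ell(u)$ (this uses only the deletion/exchange condition in $(W,R)$, which is available since $(W,R)$ is a genuine Coxeter system). The hypothesis $\ell(wr_s)\ge\ell(w)$ translates, via $\ell(w)=\ell(u)+\ell(v)$ and the fact that $u$ is minimal in its coset, into $\ell(vr_s)>\ell(v)$ inside the dihedral group $\langle r_s,r_t\rangle$. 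Now $v$ is a reduced product of alternating $r_s$'s and $r_t$'s; since $\ell(vr_s)>\ell(v)$, $v$ must end in $r_t$ (or $v=e$), so $v\alpha_s$ is exactly one of the vectors $\underbrace{\cdots\rho_{V_1}(t)\rho_{V_1}(s)\rho_{V_1}(t)}_{n\text{ factors}}(\alpha_s)$ with $0\le n<m_{st}$ appearing in Lemma~\ref{lem:coeffpos}. That lemma gives $v\alpha_s=\lambda_n\alpha_s+\mu_n\alpha_t$ with $\lambda_n,\mu_n\ge 0$, hence $v\alpha_s\in\PLC(\Pi_1)\cup\{0\}$; in fact $v\alpha_s\ne 0$ since $v\in\GL(V_1)$ and $\alpha_s\ne 0$, so $v\alpha_s\in\PLC(\Pi_1)$, indeed a non-negative combination of $\alpha_s$ and $\alpha_t$ alone.

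It remains to push this through $u$. We have $w\alpha_s=u(v\alpha_s)=\lambda_n(u\alpha_s)+\mu_n(u\alpha_t)$ with $\lambda_n,\mu_n\ge 0$. Since $\ell(u)<\ell(w)$ and both $\ell(ur_s)>\ell(u)$ and $\ell(ur_t)>\ell(u)$, the inductive hypothesis applies to the pair $(u,s)$ and to the pair $(u,t)$, giving $u\alpha_s\in\PLC(\Pi_1)$ and $u\alpha_t\in\PLC(\Pi_1)$. A non-negative linear combination of elements of $\PLC(\Pi_1)$ lies in $\PLC(\Pi_1)$ provided at least one coefficient is positive and the corresponding vector is a genuine positive linear combination (not all-zero coefficients); since $(\lambda_n,\mu_n)\ne(0,0)$ and $\PLC(\Pi_1)$ is closed under non-negative combinations, we conclude $w\alpha_s\in\PLC(\Pi_1)$.

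The main obstacle I anticipate is the bookkeeping in the coset reduction — specifically, verifying carefully that $\ell(wr_s)\ge\ell(w)$ really does force $\ell(vr_s)>\ell(v)$ in the dihedral group, and extracting from this that $v$ ends in $r_t$ rather than $r_s$ so that Lemma~\ref{lem:coeffpos} applies with the factors in the correct order. One must be attentive to the edge cases $v=e$ (where $v\alpha_s=\alpha_s$, fine) and $m_{st}=\infty$ (where the range $0\le n<m_{st}$ is all of $\N$, still fine). Everything else is formal: the positivity of the dihedral coefficients is exactly Lemma~\ref{lem:coeffpos}, and closure of $\PLC(\Pi_1)$ under non-negative combinations is immediate from its definition.
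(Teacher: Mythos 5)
Your proof is correct and follows essentially the same route as the paper's: decompose $w$ into a prefix on which positivity of the images of $\alpha_s$ and $\alpha_t$ is secured (by induction in your version, by a minimal-counterexample plus greedy alternating descent in the paper's) times a dihedral tail ending in $r_t$, and then apply Lemma~\ref{lem:coeffpos}. The only cosmetic differences are that you invoke the minimal-coset-representative fact (the paper's Lemma~\ref{lem:minleng}, which the paper only states later and instead re-derives the needed decomposition by hand inside this proof), and that the edge case $v=e$ in fact cannot occur given the choice of $t$ --- which is precisely what guarantees $\ell(u)<\ell(w)$ so that your inductive hypothesis applies.
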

\begin{proof}
Choose $w\in W$ of minimal length such that the assertion fails for some 
$\alpha_s \in \Pi_1$, and choose such an $\alpha_s$. Certainly $w \neq 1$, 
since $1\alpha_s =\alpha_s$ is trivially a positive linear combination of 
$\Pi_1$. Thus $\ell(w) >1$, and we may choose $t\in S$ such that $w_1 = w r_t$ 
has length $\ell(w)-1$. If $\ell(w_1r_s)\geq \ell(w_1)$, then $\ell(w_1 r) \geq \ell(w_1)$ 
for both $r=r_s$ and $r=r_t$. Alternatively, if $\ell(w_1 r_s) < \ell(w_1)$, we 
define $w_2 = w_1 r_s$, and note that $\ell(w_2 r)\geq \ell(w_2)$ will hold for 
 $r=r_s$ and $r=r_t$ if $\ell(w_2 r_t) \geq \ell(w_2)$. If this latter condition 
 is not satisfied then we define $w_3 = w_2 r_t$. Continuing in this way we 
 find, for some positive integer $k$, a sequence of elements 
 $w_0 = w, w_1, w_2, \cdots, w_k$ with $\ell(w_i)=\ell(w)-i$ for all 
 $i = 0, 1, 2, \cdots, k$, and, when $i<k$,
\begin{equation*}
w_{i+1} = \left\{
\begin{array}{rl}
w_i r_s & \text{if $i$ is odd } \\
w_i r_t & \text{if $i$ is even } 
\end{array} \right.
\end{equation*}
Now since $0\leq \ell(w_k)=\ell(w)-k$, we conclude that $\ell(w)$ is an upper 
bound for the possible values of $k$. Choosing $k$ to be as large as 
possible, we deduce that $\ell(w_k r) \geq \ell(w_k)$ for both $r=r_s$ and $r= r_t$, 
for otherwise the process described above would allow a $w_{k+1}$ to be found, 
contrary to the definition of $k$. By the minimality of our original 
counterexample it follows that $w \alpha_s$ and $w\alpha_t$ are both in $\PLC(\Pi_1)$.

We have $w = w_k v$, where $v$ is an alternating product of $r_s$'s and 
$r_t$'s, ending in $r_t$, and with $k$ factors altogether. Obviously 
this means that $\ell(v)\leq k$. But $w =w_k v$ gives $\ell(w) \leq \ell(w_k)+\ell(v)$, 
so it follows that $\ell(v)\geq \ell(w)-\ell(w_k) =k$, and hence $\ell(v)=k$. Furthermore, 
in view of the hypothesis that $\ell(wr_s) \geq \ell(w)$, and since $w_k v r_s =wr_s$, we have 
$$\ell(w_k) +\ell(vr_s)\geq \ell(wr_s)\geq \ell(w) = \ell(w_k)+k =\ell(w_k)+\ell(v), $$
and hence $\ell(vr_s)\geq \ell(v)$. In particular, $v$ cannot have a reduced 
expression in which the final factor is $r_s$, for if so $vr_s$ would 
have a strictly shorter expression.

Since $r_s$ and $r_t$ satisfy the defining relations of the dihedral 
group of order $2m_{st}$, it follows that every element of the subgroup 
generated by $r_s$ and $r_t$ has an expression of length less than $m_{st}+1$ 
as an alternating product of $r_s$ and $r_t$. Thus $\ell(v)\leq m_{st}$. 
Moreover, if $m_{st}$ is finite then the two alternating products of length 
$m_{st}$ define the same element; so $\ell(v)$ cannot equal $m_{st}$, as $v$ 
has no reduced expression ending with~$r_s$.  Thus Lemma \ref{lem:coeffpos} above yields
$v \alpha_s = \lambda_1 \alpha_s + \mu_1 \alpha_t$ for some non-negative 
coefficients $\lambda_1$ and $\mu_1$. Hence 
$$w\alpha_s = w_k v \alpha_s = w_k (\lambda_1 \alpha_s + \mu_1 \alpha_t)= \lambda_1 w_k \alpha_s + \mu_1 w_k \alpha_t \in \PLC(\Pi_1),$$
since $w_k \alpha_s, w_k \alpha_t \in \PLC(\Pi_1)$. 
This contradicts our original choice of $w$ and $\alpha_s$ as a 
counterexample to the statement of the proposition; so if $w\in W$, $s \in S$, 
with $\ell(wr_s)\geq \ell(w)$, then $w\alpha_s \in \PLC(\Pi_1)$.
\end{proof}

Now we are ready to complete the proof of Theorem \ref{th:isomorphism}:

\begin{proof}[Proof of Theorem \ref{th:isomorphism}]
Suppose, for a contradiction, that the kernel of $f_1$ is nontrivial, and choose
$w$ in the kernel of $f_1$ with $w \neq 1$. Then $\ell(w)>0$, and we may write
$w =w' r_s$ for some $s\in S$ and $w'\in W$ with $\ell(w')=\ell(w)-1$. Since 
$\ell(w'r_s)> \ell(w')$, Proposition~\ref{pp:l&p} yields 
$w'\alpha_s \in \PLC(\Pi_1)$. But then
$$
\alpha_s = w \alpha_s = (w' r_s)\alpha_s =w'(r_s \alpha_s)
=w'(-\alpha_s)=-w'\alpha_s,
$$
and hence $0=\alpha_s + w'\alpha_s \in \PLC(\Pi_1)$, contradicting condition
(C5) of a Coxeter datum. 
In an entirely similar way it can be shown that the kernel of $f_2$ is trivial.
\end{proof}

Let $W$ and $R$ be as in Remark \ref{rmk:isomorphism}. We call $W$
the \emph{abstract Coxeter group} determined by the
Coxeter parameters of the Coxeter datum~$\mathscr{C}$, and we call $(W, R)$ the 
\emph{abstract Coxeter system} associated with $\mathscr{C}$. 
Observe that Theorem~\ref{th:isomorphism} yields that the $W$-actions on $V_1$ and $V_2$
induced by the isomorphisms $f_1$ and $f_2$ are faithful.

\section{Root Systems and Canonical Coefficients}
\label{sec:rts&cc}

\begin{definition}
\label{df:rootsys}
Suppose that $\mathscr{C}=(\,S, V_1, V_2,\Pi_1, \Pi_2, \langle\,,\,\rangle\,)$ is a 
Coxeter datum, and suppose that $(W, R)$ is the associated abstract Coxeter system.

\noindent\rm{(i)} Define $\Phi_1(\mathscr{C})=W\Pi_1=\{\, w\alpha_s\mid  w\in W, s\in S   \,\}$, 
and similarly define $\Phi_2(\mathscr{C}) = W \Pi_2=\{\,w \beta_s \mid w\in W, s\in S \,\}$.  For each
$i\in\{1,2\}$, we call $\Phi_i(\mathscr{C})$ the \emph{root system} of $W$ in $V_i$, and its
elements the \emph{roots} of $W$ in $V_i$. We call $\Pi_i$ the set of
\emph{simple roots} in $\Phi_i(\mathscr{C})$, and we say that $\Pi_i$ forms a \emph{root
basis} for $\Phi_i(\mathscr{C})$.  

\noindent\rm{(ii)} Set $\Phi_i^+(\mathscr{C})= \Phi_i(\mathscr{C}) \cap
\PLC(\Pi_i)$, and $\Phi_i^-(\mathscr{C})=-\Phi_i^+(\mathscr{C})$ for each $i\in\{1,2\}$. 
We call $\Phi_i^+(\mathscr{C})$ the set of \emph{positive roots} in $\Phi_i(\mathscr{C})$, 
and $\Phi_i^-(\mathscr{C})$ the set of \emph{negative roots} in $\Phi_i(\mathscr{C})$. 
\end{definition}

Given the above notation, for each $i\in\{1,2\}$, we adopt the traditional diagrammatic description of
simple roots $\Pi_i$: draw a graph that has one vertex for each $s\in S$, and
join the vertices corresponding to $s,t \in S$ by an edge labelled by $m_{st}$
if $m_{st} >2$. The label $m_{st}$ is often omitted if $m_{st}=3$. Thus the
diagram 
\lower13 pt\hbox{\begin{tikzpicture}[x=.3cm, y=0.3cm, node distance=0mm]
\node [rectangle](rlab) at (0,1){$\scriptstyle r$};
\node [rectangle](slab) at (-1,-1){$\scriptstyle s$};
\node [rectangle](tlab) at (3,-1){$\scriptstyle t$};
\draw (1,1) node{$\bullet$} -- (0,-1) node{$\bullet$} -- (2,-1) node{$\bullet$} 
-- (1,1);
\end{tikzpicture}}
\begingroup\advance\abovedisplayskip-2 pt\advance\belowdisplayskip-2 pt
corresponds to $\Pi_1 =\{\,\alpha_r, \alpha_s, \alpha_t \,\}$ and $\Pi_2 =\{\,
\beta_r, \beta_s, \beta_t\,\}$. 
Suppose that 
\begin{align*}
&\langle \alpha_r, \beta_s \rangle =-1/4,\quad &\langle\alpha_s, \beta_t\rangle =-1/6,\qquad\quad  
&\langle \alpha_t, \beta_r\rangle=-1/10,\\
&\langle \alpha_s, \beta_r \rangle =-1,\quad &\langle \alpha_t, \beta_s\rangle =-3/2,\qquad\quad 
&\langle \alpha_r, \beta_t\rangle=-5/2.
\end{align*}
Then
\begin{displaymath}
\begin{split}
&r_s \alpha_r=\alpha_r-2\langle \alpha_r, \beta_s \rangle \alpha_s =
\alpha_r+\tfrac{1}{2} \alpha_s;\\
&(r_r r_s) \alpha_r= r_r(\alpha_r+\tfrac{1}{2} \alpha_s)=\tfrac{1}{2}
\alpha_s;\\
&(r_t r_r r_s) \alpha_r =r_t (\tfrac{1}{2} \alpha_s)=\tfrac{1}{2} \alpha_s+
\tfrac{1}{6} \alpha_t;\\
&(r_s r_t r_r r_s)\alpha_r=r_s(\tfrac{1}{2} \alpha_s+ \tfrac{1}{6}
\alpha_t)=\tfrac{1}{6} \alpha_t;\\
&(r_r r_s r_t r_r r_s) \alpha_r=r_r (\tfrac{1}{6} \alpha_t)= \tfrac{1}{6}
\alpha_t+ \tfrac{1}{30} \alpha_r;\\
&(r_t r_r r_s r_t r_r r_s)\alpha_r=r_t(\tfrac{1}{6} \alpha_t+ \tfrac{1}{30}
\alpha_r)=\tfrac{1}{30}\alpha_r.
\end{split}
\end{displaymath}
In particular, we notice from the above that it is possible for a non-trivial positive
scalar multiple of a root to also be a root, lying in the same $W$-orbit as
the root itself. Clearly if $w\alpha=\lambda\alpha$ where $\alpha \in \Phi_1(\mathscr{C})$ 
then $w^n\alpha=\lambda^n\alpha$ for all $n\in \N$.  Since it is quite possible that $\lambda\ne \pm 1$, 
it follows that there could well be infinitely
many non-trivial scalar multiples of $\alpha$ in~$\Phi_1(\mathscr{C})$. Further, all roots in the
$W$-orbit of $\alpha$ will possess this same property. Of course, the same situation
could arise in $\Phi_2(\mathscr{C})$ as well. This is one of the features setting $\Phi_1(\mathscr{C})$ 
and $\Phi_2(\mathscr{C})$ apart from the classical root systems studied in \cite[Ch.V]{BN68}, \cite{HH81} or \cite[Ch.5]{HM}.  

Proposition~\ref{pp:l&p} yields the following:
\begin{lemma}
\label{lem:key}
Suppose that $\mathscr{C}=(\,S, V_1, V_2, \Pi_1, \Pi_2, \langle\,,\,\rangle\,)$ is a Coxeter datum, 
and suppose that $(W, R)$ is the associated abstract Coxeter system.
\noindent\rlap{\rm (i)}\qquad
$\Phi_i(\mathscr{C}) = \Phi_i^+ (\mathscr{C})\uplus \Phi_i^-(\mathscr{C})$, for each $i\in \{1,2\}$, where $\uplus$
denotes disjoint union.

\noindent\rlap{\rm (ii)}\qquad
If $w\in W$ and $s\in S$, then 
\begin{equation*}
\ell(wr_s) =
\begin{cases}
\ell(w)+1  \text{   if } w\alpha_s \in \Phi_1^+(\mathscr{C}) \text{, and } w\beta_s \in
\Phi_2^+(\mathscr{C}),\\
\ell(w)-1  \text{   if } w\alpha_s \in \Phi_1^-(\mathscr{C}) \text{, and } w\beta_s \in
\Phi_2^-(\mathscr{C}).
\end{cases}
\end{equation*}\qed
\end{lemma}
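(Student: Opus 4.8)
The plan is to derive Lemma~\ref{lem:key} directly from Proposition~\ref{pp:l&p} together with the basic properties of the reflections $\rho_{V_i}(s)$, proving the two parts in order.

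For part (i), I would first observe that $\Phi_i^+(\mathscr{C})$ and $\Phi_i^-(\mathscr{C})$ are disjoint: if some root lay in both, then it would be simultaneously a positive linear combination of $\Pi_i$ and the negative of one, forcing $0 \in \PLC(\Pi_i)$, which is ruled out by condition (C5). So the only real content is that $\Phi_i(\mathscr{C}) = \Phi_i^+(\mathscr{C}) \cup \Phi_i^-(\mathscr{C})$, i.e.\ every root is either positive or negative. Fix a root $w\alpha_s$ (respectively $w\beta_s$). Pick any $w$ representing this root and any $s$; then either $\ell(wr_s) \geq \ell(w)$ or $\ell(wr_s) < \ell(w)$. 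In the first case Proposition~\ref{pp:l&p} gives $w\alpha_s \in \PLC(\Pi_1)$, so the root is positive. In the second case, write $w = w'r_s$ with $\ell(w') = \ell(w) - 1 < \ell(w'r_s)$; then Proposition~\ref{pp:l&p} applied to $w'$ and $s$ gives $w'\alpha_s \in \PLC(\Pi_1)$, and since $w\alpha_s = w'r_s\alpha_s = -w'\alpha_s$, the root is negative. The same argument works verbatim in $V_2$ using the analogue of Proposition~\ref{pp:l&p} for $\beta_s$ (which holds by the symmetry of the setup, as noted after Lemma~\ref{lem:coeffpos}).

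For part (ii), I would use the standard deletion/exchange-free argument adapted to this setting. Given $w \in W$ and $s \in S$, exactly one of $\ell(wr_s) = \ell(w) + 1$ or $\ell(wr_s) = \ell(w) - 1$ holds (length changes by $\pm 1$ under right multiplication by a generator in any Coxeter system). If $\ell(wr_s) \geq \ell(w)$, Proposition~\ref{pp:l&p} gives $w\alpha_s \in \PLC(\Pi_1)$, hence $w\alpha_s \in \Phi_1^+(\mathscr{C})$, and likewise $w\beta_s \in \Phi_2^+(\mathscr{C})$. Conversely, if $\ell(wr_s) = \ell(w) - 1$, set $w' = wr_s$; then $\ell(w'r_s) = \ell(w) > \ell(w')$, so $w'\alpha_s \in \PLC(\Pi_1)$ by Proposition~\ref{pp:l&p}, and $w\alpha_s = w'r_s\alpha_s = -w'\alpha_s \in \Phi_1^-(\mathscr{C})$, with the same conclusion in $V_2$. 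Combining these two implications with the fact that the positive and negative roots are disjoint (part (i)) yields the biconditional: $\ell(wr_s) = \ell(w)+1$ \emph{if and only if} $w\alpha_s \in \Phi_1^+(\mathscr{C})$, and $\ell(wr_s) = \ell(w) - 1$ \emph{if and only if} $w\alpha_s \in \Phi_1^-(\mathscr{C})$, and similarly for $\beta_s$.

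The main obstacle, such as it is, is the potential mismatch between the "only if" direction of part (ii) as a statement about $\Phi_1$ alone and the fact that the lemma bundles the $V_1$ and $V_2$ conclusions together. This is handled cleanly because the dichotomy $\ell(wr_s) = \ell(w) \pm 1$ is governed entirely by the abstract group $W$, so whichever case we are in forces the corresponding sign \emph{simultaneously} in both $V_1$ and $V_2$ via the two parallel applications of Proposition~\ref{pp:l&p}; there is no danger of $w\alpha_s$ being positive while $w\beta_s$ is negative. Beyond that, everything reduces to Proposition~\ref{pp:l&p} and condition (C5), so no further machinery is needed.
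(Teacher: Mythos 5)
Your proposal is correct and follows exactly the route the paper intends: the paper offers no written proof beyond the remark that Proposition~\ref{pp:l&p} yields the lemma, and your argument is precisely the standard derivation from that proposition together with condition (C5) for disjointness and the length-parity dichotomy $\ell(wr_s)=\ell(w)\pm1$. Your handling of the $V_2$ case via the symmetric analogue of Proposition~\ref{pp:l&p} matches the paper's implicit usage (cf.\ the end of the proof of Theorem~\ref{th:isomorphism}), so nothing is missing.
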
 

\begin{remark}
\label{rm:sign}
Part (ii) of the above lemma implies that for $w\in W$ and $s\in S$, 
$w\alpha_s \in \Phi_1^+(\mathscr{C})$ if and only if $w\beta_s \in \Phi_2^+(\mathscr{C})$, 
and $w\alpha_s \in \Phi_1^-(\mathscr{C})$ if and only if $w\beta_s \in \Phi_2^-(\mathscr{C})$.
\end{remark}

\begin{remark}
Since $\mathscr{C}$ is not assumed to be free, although we know
from Lemma~\ref{lem:key}~(i) that  each root in $\Phi_i(\mathscr{C})$ (for each $i\in \{1, 2\}$) 
is expressible as a linear combination of simple roots from $\Pi_i$  with coefficients all being 
of the same sign, these expressions need not be unique. Thus the concept of 
\emph{the coefficient of a simple root in a given root} is potentially ambiguous.
To obtain a canonical way of expressing roots in terms of simple roots, 
we employ a  construction similar as those of \cite{FU1}, \cite{HT97} and \cite{MP89}. We define 
a free Coxeter datum $\mathscr{C'}$ on the same set of Coxeter parameters as those 
of $\mathscr{C}$. Then both $\mathscr{C}$ and the free Coxeter datum 
$\mathscr{C'}$ correspond to the same abstract Coxeter system $(W, R)$. It 
turns out that for each $i\in \{1, 2\}$, there exists a canonical $W$-equivariant 
bijection $\pi_i\colon \Phi_i (\mathscr{C}) \leftrightarrow\Phi_i(\mathscr{C'})$ which 
maps simple roots to simple roots. Since each 
$\Phi_i(\mathscr{C'})$ is free, there is no ambiguity of the coefficient of a simple 
root in any given root of $\Phi(\mathscr{C'})$, and this will provide a canonical expression of roots in 
$\Phi_i(\mathscr{C})$ in terms of $\Pi_i$ via $\pi_i$.
However, since $\mathscr{C}$ lacks the integrality condition assumed in \cite{MP89}, 
the proof used in \cite{MP89} will not apply here. 
Further, since it is no longer true that there exists a 
bijection between $\Phi_i(\mathscr{C})^+$ ($i=1, 2$) and the reflections in $W$, 
the proofs used in \cite{FU1} and \cite{HT97} will not apply either. 
\end{remark}

Let $V_1'$ be a vector space over $\R$ with basis $\Pi_1'= \{ \,\alpha_s' \mid 
s\in S \,\}$ in bijective correspondence with~$S$, and let $V'_2$ be a vector 
space over $\R$ with basis $\Pi'_2= \{\, \beta'_s \mid s\in S\,\}$, also in 
bijective correspondence with~$S$. Define linear maps $\pi_1\colon V'_1 \to V_1$ 
and $\pi_2\colon V'_2 \to V_2$ by requiring that 
$$
\pi_1\bigl(\sum\limits_{s\in S}\lambda_s \alpha'_s\bigr)
=\sum\limits_{s\in S}\lambda_s \alpha_s, \quad \text{and} \quad
\pi_2\bigl(\sum\limits_{s\in S}\mu_s \beta'_s\bigr)=\sum\limits_{s\in S}\mu_s \beta_s,
$$
for  all $\lambda_s, \mu_s \in \R$, and
define a bilinear map $\langle \, ,\,\rangle'\colon V'_1\times V'_2 \to \R$ 
by requiring that 
$\langle \alpha'_s, \beta'_t \rangle' =\langle \alpha_s, \beta_t\rangle$
for all $s,\,t\in S$. Observe that then
$\langle x',y'\rangle'=\langle\pi_1(x'),\pi_2(y')\rangle$
for all $x'\in V_1'$ and $y'\in V_2'$.

Thus $\mathscr{C}'=(\,S,V_1',V_2',\Pi_1',\Pi_2',\langle\ ,\ \rangle'\,)$ 
is a free Coxeter datum with the same parameters as $\mathscr{C}$, 
and therefore is associated to the same abstract
Coxeter system $(W, R)$. Applying Theorems \ref{th:isomorphism} to
 $\mathscr{C}'$ then yields isomorphisms
$f'_1\colon W\to W'_1(\mathscr{C'})$ and $f'_2\colon W\to W'_2(\mathscr{C'})$.
These isomorphisms induce $W$-actions on $V_1'$ and $V_2'$ via:
$wx'=(f'_1(w))(x')$, and $u y'=(f'_2(u))(y')$ for all $w, u\in W$, $x'\in
V_1'$ and $y'\in V_2'$. Note that for each $i\in \{1, 2\}$ and each $s\in S$,  
it follows from the above definitions that $f'_i(r_s)=\rho_{V'_i}(s)$, and this in turn yields that
$\pi_i f_i'(r_s)=f_i(r_s)\pi_i,$
where $f_1$ and $f_2$ are as in Theorem~\ref{th:isomorphism}. Since $W$ is
generated by $\{\,r_s\mid s\in S\,\}$, it follows that 
$\pi_i f_i'(w)=f_i(w)\pi_i$ for all $w\in W$ and $i\in\{1,2\}$. Summing up, we have:
\begin{equation}\label{eq:homs}
\pi_i(wz')=w\pi_i(z')\qquad\text{for all $w\in W$ and $z'\in V_i'$},
\end{equation}
and hence each $\pi_i$ 
is a $W$-module homomorphism.
 
\begin{proposition}
\label{pp:eqv}
For each $i\in\{1,2\}$, the restriction of $\pi_i$ defines a
$W$-equivariant bijection $\Phi_i(\mathscr{C'})\to\Phi_i(\mathscr{C})$. 
\end{proposition}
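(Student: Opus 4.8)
The plan is to prove that $\pi_i$ restricts to a $W$-equivariant bijection $\Phi_i(\mathscr{C}')\to\Phi_i(\mathscr{C})$ by establishing, in order: (a) $\pi_i$ maps $\Phi_i(\mathscr{C}')$ onto $\Phi_i(\mathscr{C})$ and is $W$-equivariant; (b) $\pi_i$ carries positive roots to positive roots and negative roots to negative roots; and (c) $\pi_i$ is injective on $\Phi_i(\mathscr{C}')$. Since the arguments for $i=1$ and $i=2$ are identical, I would carry out the case $i=1$ and remark that $i=2$ is the same. Throughout, write $\pi=\pi_1$, $\Phi'=\Phi_1(\mathscr{C}')$, $\Phi=\Phi_1(\mathscr{C})$, and similarly for the $\pm$ superscripts.

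For (a): by definition $\pi(\alpha_s')=\alpha_s$ for each $s\in S$, so $\pi(\Pi_1')=\Pi_1$; combined with the $W$-module homomorphism property \eqref{eq:homs}, for any $w\in W$ and $s\in S$ we get $\pi(w\alpha_s')=w\pi(\alpha_s')=w\alpha_s$. Since $\Phi'=W\Pi_1'$ and $\Phi=W\Pi_1$, this shows $\pi(\Phi')=\Phi$ and that the restriction is $W$-equivariant. For (b): the linear map $\pi$ is clearly monotone in the sense that it sends $\PLC(\Pi_1')$ into $\PLC(\Pi_1)\cup\{0\}$; but $0\notin\PLC(\Pi_1)$ by (C5), and a positive root is never $0$ (again by (C5), since a root in $\PLC(\Pi_1')$ equal to zero would violate (C5) for $\mathscr{C}'$), so $\pi$ sends $\Phi'^+=\Phi'\cap\PLC(\Pi_1')$ into $\PLC(\Pi_1)$. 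Moreover each element of $\Phi'^+$ lies in $\Phi'$, hence its image lies in $\Phi$, so in fact $\pi(\Phi'^+)\subseteq\Phi\cap\PLC(\Pi_1)=\Phi^+$; applying $-1$ gives $\pi(\Phi'^-)\subseteq\Phi^-$. Using Lemma~\ref{lem:key}(i) for both data, this means $\pi$ restricted to $\Phi'$ respects the sign partition.

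The main obstacle is (c), injectivity. Here I would argue as follows. Suppose $\beta',\gamma'\in\Phi'$ with $\pi(\beta')=\pi(\gamma')$. Replacing the pair by its negatives if necessary (using (b), which guarantees $\beta'$ and $\gamma'$ lie on the same side), we may assume $\beta',\gamma'\in\Phi'^+$. Write $\beta'=w\alpha_s'$ for some $w\in W$, $s\in S$, chosen with $\ell(w)$ minimal among all such expressions for $\beta'$; similarly $\gamma'=u\alpha_t'$ with $\ell(u)$ minimal. The key point is that in the \emph{free} datum $\mathscr{C}'$ the coefficient vector of a positive root is well-defined, and one can run the standard exchange-type induction on $\ell(w)+\ell(u)$: if $\ell(w)=0$ then $\beta'=\alpha_s'$, so $\pi(\gamma')=\alpha_s$; since $\gamma'\in\PLC(\Pi_1')$ and $\pi(\gamma')=\alpha_s$ with $\alpha_s\notin\PLC(\Pi_1\setminus\{\alpha_s\})$ by Lemma~\ref{lem:l1}, the support of $\gamma'$ must be $\{s\}$, forcing $\gamma'=\lambda\alpha_s'$ for some $\lambda>0$; but in a free datum the only positive root that is a scalar multiple of $\alpha_s'$ is $\alpha_s'$ itself (scalar multiples of roots do not occur when $\Pi_1'$ is linearly independent, cf.\ the discussion preceding this proposition applied to $\mathscr{C}'$, where Lemma~\ref{lem:storder} forces the relevant coefficients to be those of genuine reflections), so $\gamma'=\alpha_s'=\beta'$. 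If $\ell(w)>0$ and $\ell(u)>0$, pick $r\in S$ with $\ell(wr_r)<\ell(w)$; then by Lemma~\ref{lem:key}(ii), $w\alpha_r'\in\Phi'^-$, i.e.\ $-w\alpha_r'\in\Phi'^+$, and one checks $r_r$ lowers length on $w$ and hence, via the sign of $\langle\alpha_s',(f_2'(w))^{-1}\beta_r'\rangle'=\langle w^{-1}\cdot,\cdot\rangle'$ computations, relate $w r_r$ to $u$; the cleanest route is to show that applying $r_r$ to both sides (using $\pi$-equivariance and $\pi(\beta')=\pi(\gamma')$) reduces $\ell(w)+\ell(u)$ while preserving the hypothesis, unless $w^{-1}u$ lies in a dihedral parabolic where the two-generator formulas of Lemma~\ref{lem:storder} can be invoked directly to conclude $\beta'=\gamma'$. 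In fact, a smoother formulation: since $\pi$ intertwines the $W$-actions and $\pi(\beta')=\pi(\gamma')$, the element $\beta'-\gamma'\in\ker\pi$, and $\ker\pi$ is a $W$-submodule of $V_1'$; I would show that $\ker\pi\cap(\PLC(\Pi_1')-\PLC(\Pi_1'))$ meets $\Phi'-\Phi'$ only in $0$ by tracking supports and using that on the finite-dimensional span of $\{\alpha_s',\alpha_t'\}$ (for $s,t$ in the supports) the roots are in bijection with the orbit described by Lemma~\ref{lem:storder}, which has no coincidences. This support/dihedral reduction is where the real work lies; everything else is bookkeeping with \eqref{eq:homs}, Lemma~\ref{lem:l1}, and Lemma~\ref{lem:key}.
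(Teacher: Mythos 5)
Your steps (a) and (b) are fine, but step (c) --- the injectivity, which is the entire content of the proposition --- is not actually proved. You correctly reduce to positive roots and set up an induction on length, but the inductive step is only a description of intentions: ``one checks $r_r$ lowers length \dots relate $wr_r$ to $u$'', ``the cleanest route is to show \dots unless $w^{-1}u$ lies in a dihedral parabolic'', ``by tracking supports''. None of these is carried out (note also that you choose $r$ with $\ell(wr_r)<\ell(w)$, a right descent, but then propose to apply $r_r$ on the left), and you yourself concede that ``this support/dihedral reduction is where the real work lies''. That real work is precisely what the paper supplies in Lemma~\ref{lem:1.24}: if $w\alpha_s=\lambda\alpha_t$ with $\lambda>0$ and $\ell(wr_r)<\ell(w)$, then $\langle\{r_r,r_s\}\rangle$ is \emph{finite} and $w_{\{r,s\}}r_s$ is a right-hand segment of $w$; the explicit dihedral formulas of Lemma~\ref{lem:storder} then show that $(w_{\{r,s\}}r_s)\alpha_s$ equals $\alpha_s$ (when $m_{rs}$ is even) or an explicit multiple of $\alpha_r$ (when $m_{rs}$ is odd), with \emph{the same scalar} appearing in the computation in $V_1'$, and this is what allows the induction to descend to $w(w_{\{r,s\}}r_s)^{-1}$. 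Without this (or an equivalent substitute) your induction does not close.

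There is also a genuine error: you assert that ``scalar multiples of roots do not occur when $\Pi_1'$ is linearly independent''. This is false; freeness does not rule out nontrivial scalar multiples. Take $S=\{s,t\}$, $m_{st}=3$, $\langle\alpha_s',\beta_t'\rangle'=-1/4$, $\langle\alpha_t',\beta_s'\rangle'=-1$ (a free Coxeter datum, since $(-1/4)(-1)=\cos^2(\pi/3)$); then $r_sr_t\alpha_s'=\tfrac12\alpha_t'$, so both $\alpha_t'$ and $\tfrac12\alpha_t'$ lie in $\Phi_1(\mathscr{C}')$. (The paper's triangle example in Section~\ref{sec:rts&cc} exhibits the same phenomenon.) The base case of your induction can be repaired without this claim --- from $\pi(\gamma')=\alpha_s$ and $\gamma'=\lambda\alpha_s'$ one gets $\lambda\alpha_s=\alpha_s$, hence $\lambda=1$ directly --- but the misconception is not harmless, because the whole point of the proposition is that the scalar-multiple relations among roots are \emph{identical} in $\mathscr{C}$ and $\mathscr{C}'$, and establishing exactly that (the statement that $w\alpha_s=\lambda\alpha_t$ implies $w\alpha_s'=\lambda\alpha_t'$) is the inductive claim your sketch leaves unproved.
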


To prove Proposition \ref{pp:eqv}, we need a few elementary results and some
further notation.

\begin{definition}
For each $i \in \{1, 2\}$, define an equivalence relation $\sim_i$ on $\Phi_i(\mathscr{C})$
as follows:
if $z_1$ and $z_2 \in \Phi_i(\mathscr{C})$, then $z_1 \sim_i z_2$ if and only if $z_1$ and $z_2$
are (nonzero) scalar multiples
of each other.  For each $z \in \Phi_i(\mathscr{C})$, write $\widehat{z}$ for the equivalence
class
containing $z$, and set $\widehat{\Phi_i(\mathscr{C})}= \{\, \widehat{z} \mid z\in \Phi_i(\mathscr{C})
\,\}$. 
\end{definition} 

Observe that the action of $W$ on $\Phi_i(\mathscr{C})$ (for $i=1,\,2$) gives rise to a
well-defined action of $W$ on $\widehat{\Phi_i(\mathscr{C})}$ satisfying $w
\widehat{z}=\widehat{wz}$ for all $w\in W$, and all $z \in \Phi_i(\mathscr{C})$. 

\begin{definition}
For $i\in \{1, 2\}$, and for each $w\in W$, define 
$$
N_i(w) = \{\, \widehat{\gamma} \mid \text{$\gamma \in \Phi^+_i(\mathscr{C})$ and
$w\gamma\in \Phi^-_i(\mathscr{C})$}\, \}. 
$$
\end{definition}

Note that for $w\in W$, the set $N_i(w)$ ($i=1, 2$) can be
alternatively characterized as 
$
\{\, \widehat{\gamma} \mid \text{$\gamma \in \Phi^-_i(\mathscr{C})$ and
$w\gamma\in \Phi^+_i(\mathscr{C})$}\, \} 
$.
Hence $\widehat{z}\in N_i(w)$ if and only if precisely one element of the
set
$\{z, wz\}$ is in $\Phi^+_i(\mathscr{C})$. A mild generalization of the techniques used in
(\cite[\S 5.6]{HM}) then yields the following result:     

\begin{lemma}
\label{lem:simp}
{\rm (i)\quad} If $s\in S$ then $N_1(r_s)=\{\widehat{\alpha_s}\}$ and
$N_2(r_s)=\{\widehat{\beta_s}\}$.
 
\noindent\rlap{\rm (ii)}\qquad Let $w\in W\!$. Then $N_1(w)$ and $N_2(w)$ both
have cardinality~$\ell(w)$.

\noindent\rlap{\rm (iii)}\qquad Let $w_1$, $w_2\in W$ and let $\dotplus $ denote
set symmetric difference. Then 
$
N_i(w_1w_2)=w_2^{-1}N_i(w_1)\dotplus N_i(w_2) 
$ for each $i\in \{1, 2\}$.
\par
\noindent\rlap{\rm (iv)}\qquad Let $w_1$, $w_2\in W$. Then for each $i\in \{1,2\}$,
$$
\ell(w_1w_2)=\ell(w_1)+\ell(w_2)\text{ if and only if }N_i(w_2)\subseteq N_i(w_1 w_2).
$$
\qed
\end{lemma}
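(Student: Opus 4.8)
The plan is to prove the four parts in the order (i), (iii), (ii), (iv): part~(iii) is a formal consequence of the definitions via a ``sign'' cocycle, part~(ii) is an induction on length using (i) and (iii), and part~(iv) drops out of (ii) and (iii) by a cardinality count. For~(i) I would argue directly. Suppose $\widehat{\gamma}\in N_1(r_s)$, so $\gamma\in\Phi_1^+(\mathscr{C})$ and $r_s\gamma\in\Phi_1^-(\mathscr{C})$; then by definition $\gamma$ and $-r_s\gamma$ both lie in $\PLC(\Pi_1)$, say $\gamma=\sum_{u\in S}c_u\alpha_u$ and $-r_s\gamma=\sum_{u\in S}d_u\alpha_u$ with all $c_u,d_u\ge0$. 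Since $r_s\gamma=\gamma-2\langle\gamma,\beta_s\rangle\alpha_s$, adding these two expressions yields
\[
\sum_{u\ne s}(c_u+d_u)\,\alpha_u \;=\; \bigl(2\langle\gamma,\beta_s\rangle-c_s-d_s\bigr)\alpha_s .
\]
If the coefficient on the right is $\le0$, then moving everything to one side produces a positive linear combination of $\Pi_1$ equal to $0$, and (C5) forces $c_u+d_u=0$ for all $u\ne s$; if it is $>0$ and some $c_u+d_u$ with $u\ne s$ were nonzero, dividing through would exhibit $\alpha_s\in\PLC(\Pi_1\setminus\{\alpha_s\})$, contradicting Lemma~\ref{lem:l1}. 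Either way $c_u=0$ for all $u\ne s$, so $\gamma=c_s\alpha_s$ with $c_s>0$ (a member of $\PLC(\Pi_1)$ is nonzero), i.e. $\widehat{\gamma}=\widehat{\alpha_s}$. Since $\widehat{\alpha_s}\in N_1(r_s)$ trivially, $N_1(r_s)=\{\widehat{\alpha_s}\}$; replacing $\alpha$'s by $\beta$'s throughout gives $N_2(r_s)=\{\widehat{\beta_s}\}$.

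For~(iii), Lemma~\ref{lem:key}(i) provides a well-defined ``sign'' map $\varepsilon_i\colon\widehat{\Phi_i(\mathscr{C})}\to\{+,-\}$ recording whether a representative lies in $\Phi_i^+(\mathscr{C})$ or $\Phi_i^-(\mathscr{C})$, and by the remark preceding the lemma $\widehat{z}\in N_i(w)$ if and only if $\varepsilon_i(\widehat{z})\ne\varepsilon_i(w\widehat{z})$. Applying this along the chain $\widehat{z},\,w_2\widehat{z},\,w_1w_2\widehat{z}$ shows $\widehat{z}\in N_i(w_1w_2)$ exactly when the sign changes across precisely one of the two steps, which says $\widehat{z}$ lies in the symmetric difference $w_2^{-1}N_i(w_1)\dotplus N_i(w_2)$. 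For~(ii) I would induct on $\ell(w)$, the case $w=1$ being $N_i(1)=\emptyset$. For $w\ne1$ pick a left descent $s$ of $w$ and set $w'=r_sw$, so $\ell(w')=\ell(w)-1$; by~(iii) and~(i), $N_i(w)=\{\widehat{w'^{-1}\sigma_s}\}\dotplus N_i(w')$, where $\sigma_s$ is $\alpha_s$ or $\beta_s$ according as $i=1$ or $i=2$. Since $\ell(w'^{-1}r_s)=\ell(w'^{-1})+1$, Lemma~\ref{lem:key}(ii) gives $w'^{-1}\sigma_s\in\Phi_i^+(\mathscr{C})$, and as $w'(w'^{-1}\sigma_s)=\sigma_s$ is also positive we get $\widehat{w'^{-1}\sigma_s}\notin N_i(w')$; hence the symmetric difference is a disjoint union and $\lvert N_i(w)\rvert=1+\lvert N_i(w')\rvert=\ell(w)$ by induction (this also shows each $N_i(w)$ is finite).

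Finally for~(iv): from~(iii) we have $N_i(w_1w_2)=w_2^{-1}N_i(w_1)\dotplus N_i(w_2)$, and for finite sets $A,B$ one has $\lvert A\dotplus B\rvert=\lvert A\rvert+\lvert B\rvert-2\lvert A\cap B\rvert$ and $B\subseteq A\dotplus B$ if and only if $A\cap B=\emptyset$. Taking $A=w_2^{-1}N_i(w_1)$, $B=N_i(w_2)$ and reading off all cardinalities as lengths via part~(ii), both ``$\ell(w_1w_2)=\ell(w_1)+\ell(w_2)$'' and ``$N_i(w_2)\subseteq N_i(w_1w_2)$'' become equivalent to $w_2^{-1}N_i(w_1)\cap N_i(w_2)=\emptyset$, which gives the stated equivalence. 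The main obstacle is part~(i): in the classical setting $r_s$ preserves $\Phi^+\setminus\{\alpha_s\}$ because the simple roots form a basis, so only the $\alpha_s$-coordinate is disturbed, but here $\Pi_1$ may be linearly dependent, so there are no well-defined coordinates and one must instead extract the conclusion from (C5) and Lemma~\ref{lem:l1} through the displayed relation above; once (i) is secured, the remaining parts are routine bookkeeping.
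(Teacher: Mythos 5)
Your proof is correct and is essentially the ``mild generalization of the techniques of \cite[\S 5.6]{HM}'' that the paper invokes without writing out: in part (i) you correctly replace the classical coordinate argument (unavailable since $\Pi_1$ need not be linearly independent) by an appeal to (C5) and Lemma~\ref{lem:l1}, and parts (ii)--(iv) are the standard cocycle and cardinality bookkeeping. One small correction to the wording of (iii): since each class $\widehat{z}$ contains both $z$ and $-z$, there is no well-defined sign map $\varepsilon_i$ on $\widehat{\Phi_i(\mathscr{C})}$; what \emph{is} well defined --- and is all your chain argument actually uses --- is the paper's criterion that precisely one element of $\{z,wz\}$ lies in $\Phi_i^+(\mathscr{C})$, i.e.\ the property ``the sign changes across this step,'' so the parity argument along $z,\ w_2z,\ w_1w_2z$ should be phrased for a fixed representative $z$ rather than via a sign attached to the class itself.
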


If $w_1\, , w_2\in W$ with $\ell(w_1w_2)=\ell(w_1)+\ell(w_2)$ then we call
$w_2$ a \emph{right hand segment} of $w_1 w_2$.
If $w=r_{s_1} r_{s_2}\cdots r_{s_l}$ (where $w\in W$ and $s_1, \cdots, s_l \in S$) with
$\ell(w)=l$, then the above lemma yields that 
\begin{align}
\label{eq:N1}
N_1(w)&=\{ \widehat{\alpha_{s_l}},\, r_{s_l}\widehat{\alpha_{s_{l-1}}},\,
r_{s_l}r_{s_{l-1}}\widehat{\alpha_{s_{l-2}}},\, \ldots,\,
r_{s_l}r_{s_{l-1}}\ldots r_{s_2} \widehat{\alpha_{s_1}}\} \\
\noalign{\hbox{and}}
\label{eq:N2}
N_2(w)&=\{ \widehat{\beta_{s_l}},\,
r_{s_l}\widehat{\beta_{s_{l-1}}},\,r_{s_l}r_{s_{l-1}}\widehat{\beta_{s_{l-2}}},\
, \ldots,\, r_{s_l}r_{s_{l-1}}\ldots r_{s_2} \widehat{\beta_{s_1}}\}. 
\end{align}

\begin{lemma} 
\label{lem:gr}
$W$ is finite if and only if $\widehat{\Phi_i(\mathscr{C})}$ is finite (for $i=1,2$).
\end{lemma}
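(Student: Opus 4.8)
The plan is to prove the equivalence by establishing both directions, using the counting result from Lemma~\ref{lem:simp}~(ii) that $|N_i(w)| = \ell(w)$, together with the description of $N_i(w)$ in \eqref{eq:N1} and \eqref{eq:N2}.

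\textbf{Forward direction.} Suppose $W$ is finite. Every root in $\Phi_i(\mathscr{C})$ is of the form $w\alpha_s$ (resp.\ $w\beta_s$) for some $w \in W$ and $s\in S$, so $|\widehat{\Phi_i(\mathscr{C})}| \le |W\Pi_i| \le |W|\cdot|S|$. Since $W$ finite forces $S$ finite (each $s \in S$ gives a distinct element $r_s \in W$ by Lemma~\ref{lem:l1}), this is a finite bound, so $\widehat{\Phi_i(\mathscr{C})}$ is finite.

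\textbf{Reverse direction.} Suppose $\widehat{\Phi_i(\mathscr{C})}$ is finite for $i=1$ (the case $i=2$ is identical). The key observation is that for any $w = r_{s_1}\cdots r_{s_l}$ reduced, \eqref{eq:N1} exhibits $N_1(w)$ as a subset of $\widehat{\Phi_1(\mathscr{C})}$ of cardinality exactly $\ell(w) = l$ by Lemma~\ref{lem:simp}~(ii). Hence $\ell(w) \le |\widehat{\Phi_1(\mathscr{C})}|$ for every $w \in W$, so the length function on $W$ is bounded. A Coxeter group with bounded length function is finite: if $\ell$ is bounded by $N$, then $W = \{ w \in W : \ell(w) \le N\}$, and there are only finitely many words of length $\le N$ in the finitely many generators $r_s$ (here $S$ is finite because, if $S$ were infinite, one could pick $n$ distinct generators and form reduced words of arbitrarily large length, forcing $\widehat{\Phi_1(\mathscr{C})}$ infinite by the same subset-counting argument). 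Therefore $W$ is finite.

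\textbf{Main obstacle.} The only delicate point is handling the case where $S$ might be infinite: one must confirm that finiteness of $\widehat{\Phi_i(\mathscr{C})}$ forces $S$ finite before (or simultaneously with) bounding the length function. This is immediate once one notes that distinct $s \in S$ yield distinct simple roots $\widehat{\alpha_s} \in \widehat{\Phi_1(\mathscr{C})}$ (by Lemma~\ref{lem:l1}, $\alpha_s$ is not a scalar multiple of $\alpha_t$ for $s \ne t$, since $\{\alpha_s,\alpha_t\}$ is linearly independent), so $|S| \le |\widehat{\Phi_1(\mathscr{C})}| < \infty$. After that, the argument is the standard one that a Coxeter system with finite generating set and bounded length is finite. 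I expect no genuine difficulty here; the proof is a short bookkeeping argument built entirely on Lemma~\ref{lem:simp}.
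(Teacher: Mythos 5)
Your proof is correct, but it takes a genuinely different route from the paper's. The paper passes to the finer equivalence relation of \emph{positive} scalar multiples, obtaining a set $\widetilde{\Phi_i}$ with $|\widetilde{\Phi_i}|=2|\widehat{\Phi_i}|$, and then constructs a permutation representation $\sigma\colon W\to\Sym(\widetilde{\Phi_i})$; injectivity of $\sigma$ follows because any $w$ in the kernel fixes every $\widetilde{\alpha_s}$, whence $\ell(wr_s)>\ell(w)$ for all $s$ by Lemma~\ref{lem:key}~(ii) and so $w=1$, giving $|W|\leq|\widetilde{\Phi_i}|!$. You instead read off from Lemma~\ref{lem:simp}~(ii) and \eqref{eq:N1} that $\ell(w)=|N_1(w)|\leq|\widehat{\Phi_1(\mathscr{C})}|$, so the length function is bounded, and combine this with the finiteness of $S$ (which you correctly extract from Lemma~\ref{lem:l1}: $s\mapsto\widehat{\alpha_s}$ is injective) to count words. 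Your argument yields the stronger quantitative conclusion that $\ell$ is bounded by $|\widehat{\Phi_1(\mathscr{C})}|$, whereas the paper's embedding into a symmetric group is slightly more self-contained in that the finiteness of $S$ never needs to be isolated as a separate step. One small caveat: your parenthetical claim that an infinite $S$ would let you ``form reduced words of arbitrarily large length'' from $n$ distinct generators is not immediate without justifying that such a product is reduced; fortunately your primary argument for $|S|\leq|\widehat{\Phi_1(\mathscr{C})}|$ does not need it, so this does not affect the validity of the proof.
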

\begin{proof}
It is clear that the finiteness of $W$ implies the finiteness of $\widehat{\Phi_i(\mathscr{C})}$ (for $i=1, 2$).
Conversely, for each $i\in \{1, 2\}$, assume that $|\widehat{\Phi_i}|<\infty$, and  
define an equivalence relation $\approx_i$ on $\Phi_i$ as follows: 
for $z_1,\,z_2\in \Phi_i$, write $z_1\approx z_2$ if there is a positive $\lambda$ such that $z_1 =\lambda z_2$. 
We write $\widetilde{z}$ for the equivalence class containing $z\in \Phi_i$, and set 
$\widetilde{\Phi_i}:=\{\, \widetilde{z}\mid z\in \Phi_i\,\}$. Observe that 
$|\widetilde{\Phi_i}|=2|\widehat{\Phi_i}|<\infty$.
The action of $W$ on $\Phi_i$ naturally induces a well-defined action of $W$ 
on $\widetilde{\Phi_i}$ satisfying $w\widetilde{z}:=\widetilde{wz}$.
Now for each $w\in W$ define a map $\sigma_w\colon \widetilde{\Phi_i} \to \widetilde{\Phi_i}$ 
by $\sigma_w(\widetilde{z}):=\widetilde{wz}$ for all $\widetilde{z}\in \widetilde{\Phi_i}$. 
Then $\sigma_w$ is a permutation of $\widetilde{\Phi_i}$, and furthermore, $w\mapsto \sigma_w$ 
is a homomorphism $\sigma: W \to \Sym(\widetilde{\Phi_i})$ (the symmetric group on $\widetilde{\Phi_i}$). 
Now if $w$ is in the kernel of $\sigma$ then $w\widetilde{z} =\widetilde{z}$ for all $z\in \Phi_i$, 
and in particular, $w \widetilde{z}=\widetilde{z}$ for all $z\in \Pi_i$. 
But by (ii)~Lemma \ref{lem:key} this means that $\ell(wr_s)>\ell(w)$ for all $s\in S$, 
and therefore $w=1$. Thus $\sigma$ is injective, and hence
$|W|\leq |\Sym(\widetilde{\Phi_i})|=|\widetilde{\Phi_i}|\,!<\infty$,
as required.
\end{proof}

\begin{remark}
\label{rm:para} 
Let $K\subseteq S$. If we define $V_{1K}$ to be the subspace of $V_1$ spanned by
$\Pi_{1K}=\{\,\alpha_s\mid s\in K\,\}$ and $V_{2K}$ to be the subspace of $V_2$
spanned by $\Pi_{2K}=\{\,\beta_s\mid s\in K\,\}$, and let $\langle\ ,\ \rangle_K$
be the restriction of $\langle\ ,\ \rangle$ to $V_{1K}\times V_{2K}$, then clearly
$\mathscr{C}_K=(K,V_{1K},V_{2K},\Pi_{1K},\Pi_{2K},\langle\ ,\ \rangle_K)$ is a Coxeter datum
with parameters $\{\,m_{st}\mid s,t\in K\,\}$. Next we write $W_K=\langle\{\,r_s'\mid
s\in K\,\}\rangle$ for the corresponding abstract Coxeter group, and let 
$\eta\colon W_K\to W$ be the homomorphism defined by $r_s'\mapsto r_s$ for all 
$s\in K$. It follows immediately from the formulas for the actions of $W$ on $V_1$ 
and $W_K$ on $V_{1K}$ that $r_s'v=r_sv$ for all $s\in K$ and
$v\in V_{1K}$, and therefore $wv=\eta(w)v$ for all $w\in W_K$ and $v\in V_{1K}$.
Since the action of $W_{K}$ on $V_{1K}$ is faithful, it follows that $\eta$ is 
injective. Thus $W_K$ can be identified with the \emph{standard parabolic subgroup\/} 
of $W$ generated by the set $\{\,r_s \mid s\in K \,\}$. 
\end{remark}

\begin{definition}
\label{def:3.10}
Given $K\subseteq S$, define $\Phi_{1K} (\subseteq V_{1K})$ and 
$\Phi_{2K}(\subseteq V_{2K})$ to be the root
systems for $W_K$ corresponding to $\mathscr{C}_K$, and 
$\Phi_{1K}^+,\,\Phi_{2K}^+$ to be the corresponding sets of positive roots.
\end{definition}
In other words, we have 
$\Phi_{1K}=\{\,w\alpha_r\mid w\in W_K\text{ and }r\in K\,\}$,
and $\Phi_{1K}^+=\Phi_{1K}\cap\PLC(\Pi_{1K})$; similarly for $\Phi_{2K}$ and
$\Phi_{2K}^+$.

\begin{remark}
\label{rm:gr}
It is a particular case of Lemma \ref{lem:gr} that $W_K$ is finite if and only if either
$\widehat{\Phi_{1K}}$ or 
$\widehat{\Phi_{2K}}$ is finite.
\end{remark}

\begin{remark}
\label{rm:longest}
It follows easily from Lemma~\ref{lem:key}~(ii)
that if $W_K$ is finite then there exists a (unique) longest
element $w_K\in W_K$ satisfying the condition 
$N_1(w_K)=\{\,\widehat \alpha\mid\alpha\in\Phi_{1K}\,\}=\widehat{\Phi_{1K}}$.
For present purposes we require this only in the special
 case when $K$ has cardinality $2$.
\end{remark}

\begin{notation}
For $r,\,s\in S$, if $m_{rs}<\infty$ then $\langle \{ r_r, r_s\}\rangle$ 
is  finite, and its longest element, denoted by $w_{\{r,s\}}$,
is $r_rr_sr_r\cdots=r_sr_rr_s\cdots$, 
where there are $m_{rs}$ alternating factors on each side.  
\end{notation}


\begin{lemma}
\label{lem:1.24}
Suppose that $s, t\in S$ and $w\in W$  such that $w\alpha_s =\lambda \alpha_t$ for some
positive constant $\lambda$. Let $r\in S$ be such that $\ell(wr_r)<\ell(w)$. Then 
\begin{itemize}
\item [(i)] $\langle \{r_r, r_s \}\rangle$ is finite.
\item [(ii)] $N_1(w_{\{r,s\}} r_s) = \widehat{\Phi_{1\{\,r,s\,\}} }\setminus
\{\widehat{\alpha_s}\}$.
\item [(iii)] $\ell(w r_s w^{-1}_{\{r,s\}})= \ell(w)-\ell(w_{\{r,s\}}r_s)$.
\end{itemize}
\end{lemma}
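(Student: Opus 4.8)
The plan is to work throughout with the sets $N_1(\cdot)$, since $\mathscr{C}$ is not assumed free and arguing with coefficients of simple roots would be unsafe. First I would record the cheap preliminaries: because $w\alpha_s=\lambda\alpha_t\in\Phi_1^+(\mathscr{C})$ (as $\lambda>0$), Lemma~\ref{lem:key}(ii) gives $\ell(wr_s)=\ell(w)+1$, which together with $\ell(wr_r)<\ell(w)$ forces $r\ne s$ and (again by Lemma~\ref{lem:key}(ii)) $w\alpha_r\in\Phi_1^-(\mathscr{C})$; moreover $w^{-1}\alpha_t=\lambda^{-1}\alpha_s\in\Phi_1^+(\mathscr{C})$, so $\ell(r_tw)=\ell(w)+1$.

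The device that drives the proof is the element $w'=r_tw$ (so $w=r_tw'$). I claim $w'$ sends \emph{both} $\alpha_r$ and $\alpha_s$ into $\Phi_1^-(\mathscr{C})$. For $\alpha_s$ this is immediate: $w'\alpha_s=r_t(\lambda\alpha_t)=-\lambda\alpha_t$. For $\alpha_r$ the delicate point — and the one spot where non-freeness really bites — is that $w\alpha_r$ is not a scalar multiple of $\alpha_t$: otherwise $w^{-1}\alpha_t$ would be a scalar multiple of $\alpha_r$, contradicting $w^{-1}\alpha_t=\lambda^{-1}\alpha_s$ and the linear independence of $\{\alpha_r,\alpha_s\}$ (Lemma~\ref{lem:l1}, using $r\ne s$). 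Hence the class $\widehat{w\alpha_r}$ differs from $\widehat{\alpha_t}$, so it is not in $N_1(r_t)=\{\widehat{\alpha_t}\}$ (Lemma~\ref{lem:simp}(i)); as $-w\alpha_r\in\Phi_1^+(\mathscr{C})$ this forces $r_t(-w\alpha_r)\in\Phi_1^+(\mathscr{C})$, i.e.\ $w'\alpha_r\in\Phi_1^-(\mathscr{C})$. I expect this ``$w'\alpha_r$ stays negative'' step to be the only real obstacle; everything after it is bookkeeping with Lemma~\ref{lem:simp}.

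Granting the claim, part~(i) is quick: any $\gamma\in\Phi_{1\{r,s\}}^+$ has the form $\gamma=a\alpha_r+b\alpha_s$ with $a,b\ge 0$ not both $0$, so $-w'\gamma=a(-w'\alpha_r)+b(-w'\alpha_s)\in\PLC(\Pi_1)$ and hence $w'\gamma\in\Phi_1^-(\mathscr{C})$; that is, $\widehat{\Phi_{1\{r,s\}}}\subseteq N_1(w')$. Since $N_1(w')$ is finite (Lemma~\ref{lem:simp}(ii)), $\widehat{\Phi_{1\{r,s\}}}$ is finite, and so $\langle r_r,r_s\rangle$ is finite by Remark~\ref{rm:gr}. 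For~(ii): $w_{\{r,s\}}$ now exists, $r_s$ is one of its right descents, and $N_1(w_{\{r,s\}})=\widehat{\Phi_{1\{r,s\}}}$ by Remark~\ref{rm:longest}; applying Lemma~\ref{lem:simp}(iii) to $w_{\{r,s\}}=(w_{\{r,s\}}r_s)\,r_s$ gives $\widehat{\Phi_{1\{r,s\}}}=r_sN_1(w_{\{r,s\}}r_s)\dotplus\{\widehat{\alpha_s}\}$, whence, since $\widehat{\alpha_s}\in\widehat{\Phi_{1\{r,s\}}}$, we get $r_sN_1(w_{\{r,s\}}r_s)=\widehat{\Phi_{1\{r,s\}}}\setminus\{\widehat{\alpha_s}\}$; applying $r_s$, which permutes $\widehat{\Phi_{1\{r,s\}}}$ and fixes $\widehat{\alpha_s}$ (as $r_s\alpha_s=-\alpha_s$), yields~(ii).

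For~(iii) I would relate $N_1(w)$ to $N_1(w')$: from $w=r_tw'$, Lemma~\ref{lem:simp}(iii) gives $N_1(w)=\{\widehat{(w')^{-1}\alpha_t}\}\dotplus N_1(w')$, and $(w')^{-1}\alpha_t=w^{-1}r_t\alpha_t=-\lambda^{-1}\alpha_s$, while $\widehat{\alpha_s}\in N_1(w')$ by the claim; hence $N_1(w)=N_1(w')\setminus\{\widehat{\alpha_s}\}$. Combining this with the inclusion $\widehat{\Phi_{1\{r,s\}}}\subseteq N_1(w')$ from~(i) and with~(ii) gives $N_1(w_{\{r,s\}}r_s)=\widehat{\Phi_{1\{r,s\}}}\setminus\{\widehat{\alpha_s}\}\subseteq N_1(w)$. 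Finally Lemma~\ref{lem:simp}(iv), applied to the factorization $w=\bigl(wr_sw_{\{r,s\}}^{-1}\bigr)\bigl(w_{\{r,s\}}r_s\bigr)$, gives $\ell(w)=\ell\bigl(wr_sw_{\{r,s\}}^{-1}\bigr)+\ell(w_{\{r,s\}}r_s)$, which is~(iii).
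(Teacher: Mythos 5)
Your proof is correct, and its skeleton coincides with the paper's: exhibit an element sending both $\alpha_r$ and $\alpha_s$ into $\Phi_1^-(\mathscr{C})$, deduce $\widehat{\Phi_{1\{r,s\}}}\subseteq N_1(\cdot)$ and hence finiteness via Lemma~\ref{lem:simp}~(ii) and Remark~\ref{rm:gr}, then obtain (iii) from the $N$-set criterion of Lemma~\ref{lem:simp}~(iv). The local choices differ, though, in ways worth recording. For (i) the paper works with $wr_s$ and must argue that $wr_s\alpha_r = w\alpha_r - 2\lambda\langle\alpha_r,\beta_s\rangle\alpha_t$ (a negative root not proportional to $\alpha_t$ plus a nonnegative multiple of $\alpha_t$) is again negative — a step it compresses to ``it can be checked,'' and which really requires the same $0\notin\PLC(\Pi_1)$/Lemma~\ref{lem:l1} argument that appears later in its proof of (iii). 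Your choice of $w'=r_tw$ sidesteps this: the negativity of $w'\alpha_r$ falls out of $N_1(r_t)=\{\widehat{\alpha_t}\}$ once you know $\widehat{w\alpha_r}\ne\widehat{\alpha_t}$, which is the same non-proportionality observation the paper makes. For (iii) the paper proves $N_1(w_{\{r,s\}}r_s)\subseteq N_1(w)$ directly, by assuming $w(c_1\alpha_r+c_2\alpha_s)$ positive and extracting a relation $\lambda'\alpha_t=\sum_{t'\ne t}\lambda_{t'}\alpha_{t'}$ contradicting Lemma~\ref{lem:l1} or (C5); you instead compute $N_1(w)=N_1(w')\setminus\{\widehat{\alpha_s}\}$ from the cocycle identity and read the containment off the inclusion already established in (i) together with (ii). Your route trades the paper's two coefficient-sign arguments for symmetric-difference bookkeeping; both are sound, and yours has the minor merit of making the one informal step in the paper's (i) fully explicit.
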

\begin{proof}
(i)\quad Since $\ell(wr_r)<\ell(w)$, it follows from Lemma~\ref{lem:key}~(ii) that
$w\alpha_r \in \Phi^-_1(\mathscr{C})$. Furthermore,
$w\widehat{\alpha_r}\ne\widehat{\alpha_t}$, for otherwise $\widehat{\alpha_r} =\widehat{\alpha_s}$, 
contradicting  Lemma~\ref{lem:l1}. Observe that then
\begin{equation*}
 w r_s \alpha_r = w(\alpha_r -2\langle \alpha_r, \beta_s \rangle \alpha_s)
= \underbrace{w\alpha_r}_\text{$\in \Phi^-_1(\mathscr{C})\setminus\R\{\alpha_t\} $} -
\underbrace{ 2\lambda \langle \alpha_r, \beta_s \rangle \alpha_t}_\text{a scalar
multiple of $\alpha_t$ }.
\end{equation*}
Thus it can be checked that $w r_s\alpha_r \in \Phi_1^-(\mathscr{C})$. Now the fact that 
both $w r_s\alpha_r$ and $w r_s\alpha_s$ are negative 
implies that $w r_s(\lambda' \alpha_r + \mu' \alpha_s)$ is a negative linear 
combination of $\Pi_1$ whenever $\lambda',\,\mu'\ge0$. This says precisely that 
$\widehat{\Phi_{1\{\,r,\,s\,\}}} \subseteq N_1(w r_s)$. 
Since $N_1(w r_s)$ is a finite set of size $\ell(w r_s)$ by Lemma \ref{lem:simp} (ii), 
it follows from Remark~\ref{rm:gr} above that $\langle\{ r_r, r_s \}\rangle $ must be 
finite.

(ii)\quad
First, $w_{\{r,s\}}$ exists by part (i). Next let 
$\mu \alpha_r+\nu\alpha_s \in \Phi_1^+(\mathscr{C})$ (where $\mu, \nu \geq 0$) be arbitrary. 
Then it follows from Remark~\ref{rm:longest} that  
$w_{\{r, s\}} r_s (\mu\alpha_r+\nu\alpha_s)\in \Phi_1^+(\mathscr{C})$ if and only if  
$r_s(\mu\alpha_r+\nu\alpha_s) \in \Phi_1^-(\mathscr{C})$, which by Lemma~\ref{lem:simp}~(i)  happens if 
and only if $\mu=0$, and consequently 
$N_1(w_{\{r, s\}}r_s)=\widehat{\Phi_{1\,\{r, s\}}}\setminus \{\widehat{\alpha_s}\}$, as required.

(iii)\quad 
By Lemma \ref{lem:simp} (iv), to show that $w_{\{r,s\}} r_s$ is a right hand
segment of $w$, it is enough to show that
$N_1(w_{\{r,s\}} r_s) \subseteq N_1(w)$. Suppose that $\alpha\in \Phi^+_1(\mathscr{C})$ 
such that $\widehat\alpha\in N_1(w_{\{r,s\}} r_s)$. 
By (ii) above we may write $\alpha=c_1 \alpha_r+ c_2 \alpha_s$ for some $c_1>0$
and $c_2\geq 0$. Then
$
w\alpha = c_1 w\alpha_r +c_2\lambda\alpha_t.
$
Suppose for a contradiction that $w\alpha\in \Phi_1^+(\mathscr{C})$. Then
$c_2\lambda \alpha_t=w\alpha - c_1w\alpha_r \in \PLC(\Pi_1)$.
Rearranging this equation gives $\lambda'\alpha_t =\sum_{t'\in
S\setminus\{t\}}\lambda_{t'}\alpha_{t'}$, 
where $\lambda'$ is a constant and $\lambda_{t'}\geq 0$ for all $t'\in
S\setminus\{t\}$. Now if $\lambda'>0$ 
then we have a contradiction to Lemma~\ref{lem:l1}; on the other hand if
$\lambda'\leq 0$ then we have a contradiction 
to $0\notin \PLC(\Pi_1)$. 
\end{proof}

Now we are ready to prove Proposition \ref{pp:eqv}:
\begin{proof}[Proof of Proposition 
\ref{pp:eqv}]
Since $\Phi_1(\mathscr{C'})=\{\,w\alpha_s'\mid w\in W\text{ and }s\in S\,\}$, to prove that
the restriction of $\pi_1$ to $\Phi_1(\mathscr{C'})$ is bijective it suffices to show that
if $\pi_1(w\alpha_s')=\pi_1(v\alpha_t')$ for some $w,\,v\in W$ and $s,\,t\in S$,
then $w\alpha_s'=v\alpha_t'$. 
Observe that $\pi_1(w\alpha_s')=w\alpha_s$,
and $\pi_1(v\alpha_t')=v\alpha_t$. Hence it suffices to prove the following
statement: if $w\alpha_s=\lambda\alpha_t$ for some $w\in W$, $s,\,t\in S$, and $\lambda\neq 0$,
then $w\alpha_s'=\lambda\alpha_t'$.
We assume that $w \alpha_s = \lambda\alpha_t$,
and proceed by an induction on $\ell(w)$. The case $\ell(w)=0$ reduces to the
statement:
if $\alpha_s=\lambda\alpha_t$ for some $s,\,t\in S$ then $\alpha_s'=\lambda\alpha_t'$. 
Given $\alpha_s=\lambda\alpha_t$, Lemma~\ref{lem:l1} and the 
requirement that $0\notin\PLC(\Pi_1)$ together yield that $\lambda=1$ and $s=t$, 
and we are done.
Thus we may assume that
$\ell(w)>0$, and choose $r\in S$ such that $\ell(wr_r) <\ell(w)$. Lemma
\ref{lem:1.24} yields that
$\langle \{r_r, r_s\} \rangle$ is a finite dihedral group (hence $m_{rs}$ is
finite),  and $\ell(w(w_{\{r, s\}} r_s)^{-1})=\ell(w)-\ell(w_{\{r,s\}}r_s)$.
We treat separately the cases $m_{rs}$ even and $m_{rs}$ odd.

If $m_{rs}=2k$ is even, then
$w_{\{r,s\}} =(r_r r_s)^k=(r_s r_r)^k$, and then the formulas in Lemma 
\ref{lem:storder}~(i)
yield
\begin{align}
\label{eq:A}
(w_{\{r,s\}} r_s)\alpha_s&=  -w_{\{r,s\}}\alpha_s =-(r_s r_r)^k\alpha_s\notag\\
&= -\frac{\sin((m_{rs}+1)\pi/m_{rs})}{\sin(\pi/m_{rs})}
\alpha_s-\frac{-\cos(\pi/m_{rs})}{\langle \alpha_r, \beta_s
\rangle}\frac{\sin(\pi)}{\sin(\pi/m_{rs})} \alpha_r\notag\\
&= \alpha_s; 
\end{align}
and by exactly the same calculation in $V_1'$ we have:
\begin{equation}\label{eq:AA}
(w_{\{r,s\}} r_s)\alpha'_s= \alpha'_s. 
\end{equation}
Observe that (\ref{eq:A}) yields that
\begin{equation}
\label{eq:e1}
\alpha_t = w\alpha_s =w (w_{\{r,s\}} r_s)^{-1}(w_{\{r,s\}}r_s)\alpha_s
=w (w_{\{r,s\}} r_s)^{-1}\alpha_s. 
\end{equation}  
Now since $\ell(w (w_{\{r,s\}} r_s)^{-1})<\ell(w)$, 
the inductive hypothesis combined with (\ref{eq:e1}) give us
\begin{equation}
\label{eq:t'}
\alpha'_t = w(w_{\{r,s\}} r_s)^{-1}\alpha'_s.
\end{equation}
Then it follows from (\ref{eq:AA}) and (\ref{eq:t'}) that 
$$
w\alpha'_s=(w (w_{\{r,s\}}
r_s)^{-1}(w_{\{r,s\}}r_s))\alpha'_s=w (w_{\{r,s\}}
r_s)^{-1}\alpha'_s=\alpha'_t, 
$$
and the desired result follows by induction.

Next if $m_{rs}=2k+1$ is odd, then 
$w_{\{r,s\}}r_s =\underbrace{\ldots r_rr_s r_r}_\text{$(m_{rs}-1)$ factors}=(r_s r_r)^{k}$.
Then the formulas in Lemma \ref{lem:storder}~(i) yield that 
\begin{align}
\label{eq:AAA}
(w_{\{r,s\}}r_s) \alpha_s &= (r_s r_r)^{k}\alpha_s\notag\\
&=\frac{\sin
(\pi)}{\sin(\pi/m_{rs})}\alpha_s+\frac{-\cos(\pi/m_{rs})}{\langle \alpha_r,
\beta_s\rangle}\frac{\sin(2k\pi/m_{rs})}{\sin( \pi/m_{rs})} \alpha_r\notag\\
&=\frac{-\cos(\pi/m_{rs})}{\langle \alpha_r, \beta_s\rangle}\alpha_r;
\end{align}
and by exactly the same calculation in $V_1'$ we have: 
\begin{equation}
\label{eq:BBB}
(w_{\{r,s\}}r_s)\alpha'_s =\frac{-\cos(\pi/m_{rs})}{\langle \alpha_r, \beta_s\rangle}\alpha'_r.
\end{equation}
Observe that (\ref{eq:AAA}) yields that 
\begin{equation}
\label{eq:DDD}
\alpha_t = w(w_{\{r,s\}}r_s)^{-1}(w_{\{r,s\}}r_s) \alpha_s
= w (w_{\{r,s\}}r_s)^{-1}(\frac{-\cos(\pi/m_{rs})}{\langle \alpha_r,
\beta_s\rangle}\alpha_r). 
\end{equation}
Since $\ell(w (w_{\{r,s\}}r_s)^{-1})<\ell(w)$, combining (\ref{eq:DDD}) 
and the inductive hypothesis yield that
\begin{equation}
\label{eq:EEE}
\alpha'_t=w (w_{\{r,s\}}r_s)^{-1}(\frac{-\cos(\pi/m_{rs})}{\langle \alpha_r,
\beta_s\rangle}\alpha_r').
\end{equation} 
Finally it follows from (\ref{eq:BBB}) and (\ref{eq:EEE}) that 
\begin{align*}
w\alpha'_s  = w (w_{\{r,s\}}r_s)^{-1} (w_{\{r,s\}}r_s)\alpha'_s
&=w (w_{\{r,s\}}r_s)^{-1}(\frac{-\cos(\pi/m_{rs})}{\langle \alpha_r,
\beta_s\rangle}\alpha_r')\\
&=\alpha'_t,
\end{align*}
completing the proof that $\pi_1$ restricts to a bijection 
$\Phi_1(\mathscr{C'})\leftrightarrow \Phi_1(\mathscr{C})$. 
Exactly the same reasoning also yields that $\pi_2$ restricts to a bijection 
$\Phi_2(\mathscr{C'})\leftrightarrow \Phi_2(\mathscr{C})$.
\end{proof}
Under the freeness condition  of $\mathscr{C'}$, each root 
$\alpha'\in \Phi_1(\mathscr{C'})$  can be written uniquely in the form 
$\alpha'=\sum_{s\in S} \lambda_s \alpha'_s$; 
we say that $\lambda_s$ is the \emph{coefficient} of $\alpha'_s$ in $\alpha'$, and denote it 
by $\coeff_{s}(\alpha')$. Similarly each root 
$\beta'\in \Phi_2(\mathscr{C'})$ can be written uniquely as 
$\beta'=\sum_{s\in S} \mu_s \beta'_s$; we say that $\mu_s$ is the \emph{coefficient} of $\beta'_s$ 
in $\beta'$, and denote it by $\coeff_{s} (\beta')$.  

\begin{definition}
(i)\quad Let $\alpha \in \Phi_1(\mathscr{C})$ be arbitrary. For each $s\in S$, define the
\emph{canonical coefficient} of $\alpha_s$ in the root $\alpha$, written 
$\coeff_{s}(\alpha)$, by requiring that $\coeff_{s}(\alpha)=\coeff_s(\pi^{-1}_1(\alpha))$, 
where $\pi_1$ is as in Proposition~\ref{pp:eqv}. The \emph{support} of the root
$\alpha$, written $\supp(\alpha)$, is the set $\{\,\alpha_s\mid \coeff_{s}(\alpha)\neq 0\,\}$.

\noindent\rm (ii)\quad  Let $\beta \in \Phi_2(\mathscr{C})$ be arbitrary. For each $s\in S$,
define the \emph{canonical coefficient} of $\beta_s$ in the root $\beta$, written
$\coeff_{s}(\beta)$, by requiring that $\coeff_{s}(\beta)~=~\coeff_s(\pi^{-1}_2(\beta))$, 
where $\pi_2$ is as in Proposition~\ref{pp:eqv}. The \emph{support} of the root
$\beta$, written $\supp(\beta)$, is the set $\{\,\beta_s\mid \coeff_{s}(\beta)\neq 0\,\}$.

\end{definition}

A similar proof as that of Proposition \ref{pp:eqv} establishes the following:
\begin{proposition}
\label{pp:phi}
Suppose that $s, t \in S$. If $w\alpha_s=\lambda\alpha_t$ for some $w\in W$
and some non-zero constant $\lambda$,
then $w\beta_s = \tfrac{1}{\lambda}\beta_t$.
\qed
\end{proposition}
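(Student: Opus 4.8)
The plan is to mimic the proof of Proposition~\ref{pp:eqv}, but now tracking the interplay between $\Phi_1$ and $\Phi_2$ rather than between $\mathscr{C}$ and $\mathscr{C}'$. First I would observe that the assertion is symmetric in the two representation spaces once we note (via Proposition~\ref{pp:phi} itself applied to $w^{-1}$, or more safely via a direct induction) that it suffices to prove one implication. So I assume $w\alpha_s = \lambda\alpha_t$ with $\lambda \neq 0$, and proceed by induction on $\ell(w)$. The base case $\ell(w)=0$ is immediate: $\alpha_s = \lambda\alpha_t$ forces $\lambda = 1$ and $s=t$ by Lemma~\ref{lem:l1} together with $0\notin\PLC(\Pi_1)$, and then trivially $\beta_s = \tfrac{1}{\lambda}\beta_t = \beta_t$.

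For the inductive step, I would choose $r\in S$ with $\ell(wr_r) < \ell(w)$ and invoke Lemma~\ref{lem:1.24}: this gives that $\langle\{r_r,r_s\}\rangle$ is finite, hence $m_{rs} < \infty$, and that $w(w_{\{r,s\}}r_s)^{-1}$ is a right hand segment of $w$, so it has length $\ell(w) - \ell(w_{\{r,s\}}r_s) < \ell(w)$. Then I would split into the cases $m_{rs}$ even and $m_{rs}$ odd exactly as in Proposition~\ref{pp:eqv}. The point is that the formulas in Lemma~\ref{lem:storder}~(i) for $(\rho_{V_2}(s)\rho_{V_2}(t))^n(\beta_s)$ are structurally identical to those in $V_1$, with $\langle\alpha_r,\beta_s\rangle$ replaced by $\langle\alpha_s,\beta_r\rangle$ in the relevant coefficient. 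Concretely, in the even case $m_{rs}=2k$ one computes $(w_{\{r,s\}}r_s)\alpha_s = \alpha_s$ and correspondingly $(w_{\{r,s\}}r_s)\beta_s = \beta_s$; in the odd case $m_{rs}=2k+1$ one gets $(w_{\{r,s\}}r_s)\alpha_s = \tfrac{-\cos(\pi/m_{rs})}{\langle\alpha_r,\beta_s\rangle}\alpha_r$ while $(w_{\{r,s\}}r_s)\beta_s = \tfrac{-\cos(\pi/m_{rs})}{\langle\alpha_s,\beta_r\rangle}\beta_r$.

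In the even case, $w\alpha_s = \lambda\alpha_t$ and $(w_{\{r,s\}}r_s)\alpha_s=\alpha_s$ give $\alpha_t = w(w_{\{r,s\}}r_s)^{-1}\alpha_s$, so the inductive hypothesis applied to the shorter element $w(w_{\{r,s\}}r_s)^{-1}$ (which sends $\alpha_s$ to $\lambda^{-1}\cdot\lambda\,\alpha_t=\alpha_t$, i.e.\ with scalar $1$) yields $\beta_t = w(w_{\{r,s\}}r_s)^{-1}\beta_s$; combined with $(w_{\{r,s\}}r_s)\beta_s = \beta_s$ this gives $w\beta_s = \beta_t = \tfrac{1}{\lambda}\beta_t$ once we recall $\lambda=1$ — wait, here the subtlety is that in the even case one must first deduce $\lambda$ from the $V_1$ side before concluding, but in fact $\lambda$ need not be $1$: rather $w\alpha_s=\lambda\alpha_t$ directly gives, after writing $w = (w(w_{\{r,s\}}r_s)^{-1})(w_{\{r,s\}}r_s)$, the relation $\lambda\alpha_t = w(w_{\{r,s\}}r_s)^{-1}\alpha_s$, and then induction gives $\tfrac{1}{\lambda}\beta_t = w(w_{\{r,s\}}r_s)^{-1}\beta_s = w\beta_s$ directly. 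The odd case is handled identically: $\lambda\alpha_t = w(w_{\{r,s\}}r_s)^{-1}\bigl(\tfrac{-\cos(\pi/m_{rs})}{\langle\alpha_r,\beta_s\rangle}\alpha_r\bigr)$, apply the inductive hypothesis to pass to $\beta_r$, then use $\langle\alpha_s,\beta_r\rangle\langle\alpha_r,\beta_s\rangle = \cos^2(\pi/m_{rs})$ from (C3) to reconcile the two coefficients and land on $w\beta_s = \tfrac{1}{\lambda}\beta_t$.

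The main obstacle I anticipate is bookkeeping rather than conceptual: one must be careful that the inductive hypothesis is being applied to elements mapping a \emph{simple} root to a scalar multiple of a \emph{simple} root — this is guaranteed by Lemma~\ref{lem:1.24}~(ii) (the root appearing is a multiple of $\alpha_r$ or $\alpha_s$) and by the explicit formulas above — and that the reciprocal-scalar relationship is tracked consistently through each composition, using (C3) to convert between $\langle\alpha_r,\beta_s\rangle$ and $\langle\alpha_s,\beta_r\rangle$ in the odd case. Since the paper states this proposition with ``A similar proof as that of Proposition~\ref{pp:eqv}'', a complete write-up would essentially repeat that argument verbatim with $V_1\rightsquigarrow V_2$, $\alpha\rightsquigarrow\beta$, and the scalar $\lambda\rightsquigarrow\tfrac{1}{\lambda}$, so I would keep the exposition terse and refer back to the structure already established there.
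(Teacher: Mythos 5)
Your proposal is correct and is essentially the paper's own argument: the paper proves Proposition~\ref{pp:phi} by exactly the same induction on $\ell(w)$ as in Proposition~\ref{pp:eqv}, splitting on the parity of $m_{rs}$ and using the $V_2$-analogues of the formulas in Lemma~\ref{lem:storder}, with (C3) supplying the identity $\langle\alpha_r,\beta_s\rangle\langle\alpha_s,\beta_r\rangle=\cos^2(\pi/m_{rs})$ that makes the two coefficients reciprocal and hence produces the scalar $\tfrac{1}{\lambda}$ on the $\beta$ side. Your bookkeeping in both the even and odd cases is the right one, so a clean write-up along these lines matches the intended proof.
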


In particular, Proposition \ref{pp:phi} implies the following: if 
$w_1 \alpha_s= w_2 \alpha_t$ for some $s, t\in S$ and $w_1, w_2\in W$,
then $w_1 \beta_s= w_2\beta_t$. Thus  there exists a well-defined map 
$\Phi_1(\mathscr{C})\to\Phi_2(\mathscr{C})$ satisfying the requirement that 
$w\alpha_s\mapsto w\beta_s$ for all $s\in S$ and $w\in W$. This is clearly
the unique $W$-equivariant map $\Phi_1(\mathscr{C})\to\Phi_2(\mathscr{C})$ satisfying 
$\alpha_s\mapsto \beta_s$ for all $s\in S$. Furthermore, by going through a similar 
reasoning as in the proof of Proposition \ref{pp:eqv}, we may conclude 
that this map is a bijection. Summing up, we have:
 
\begin{proposition}
\label{pp:phimap}
There exists a unique $W$-equivariant bijection $\phi: \Phi_1(\mathscr{C}) \to \Phi_2(\mathscr{C})$  
satisfying $\phi(\alpha_s) =\beta_s$ for all $s\in S$.
\qed
\end{proposition}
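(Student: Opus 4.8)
The plan is to imitate the structure of the proof of Proposition~\ref{pp:eqv}, since the statement we want is essentially the ``$\Phi_2$ version'' of the compatibility already established between $\Phi_1(\mathscr{C})$ and $\Phi_1(\mathscr{C'})$, transported through Proposition~\ref{pp:phi}. First I would address \emph{existence and well-definedness}. Proposition~\ref{pp:phi} says that $w\alpha_s=\lambda\alpha_t$ forces $w\beta_s=\tfrac{1}{\lambda}\beta_t$; more importantly its promised consequence is that $w_1\alpha_s=w_2\alpha_t$ implies $w_1\beta_s=w_2\beta_t$ (apply Proposition~\ref{pp:phi} to $w=w_2^{-1}w_1$, using that $w_1\alpha_s=w_2\alpha_t$ rewrites as $(w_2^{-1}w_1)\alpha_s=\alpha_t$, hence $\lambda=1$; conversely the same with roles swapped). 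Consequently the assignment $w\alpha_s\mapsto w\beta_s$ does not depend on the chosen representation of a root of $\Phi_1(\mathscr{C})$ in the form $w\alpha_s$, so it defines a genuine map $\phi\colon\Phi_1(\mathscr{C})\to\Phi_2(\mathscr{C})$. It is $W$-equivariant by construction ($u\cdot(w\alpha_s)=(uw)\alpha_s\mapsto(uw)\beta_s=u\cdot(w\beta_s)$), and it sends $\alpha_s=1\cdot\alpha_s$ to $\beta_s$.

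Next I would establish \emph{surjectivity}, which is immediate: every element of $\Phi_2(\mathscr{C})$ has the form $w\beta_s$ for some $w\in W$, $s\in S$ (Definition~\ref{df:rootsys}), and $w\beta_s=\phi(w\alpha_s)$. Then comes \emph{injectivity}, which I expect to be the only point requiring real work and is the analogue of the bijectivity argument in Proposition~\ref{pp:eqv}. Suppose $\phi(w\alpha_s)=\phi(v\alpha_t)$, i.e. $w\beta_s=v\beta_t$; we must show $w\alpha_s=v\alpha_t$. Setting $u=v^{-1}w$ reduces this to the statement: if $u\beta_s=\mu\beta_t$ for $u\in W$, $s,t\in S$ and $\mu\neq0$, then $u\alpha_s=\mu\alpha_t$ (equivalently $u\alpha_s=\tfrac{1}{\mu}\alpha_t$ would come from a symmetric reading, but one direction suffices by symmetry of the roles of $\mathscr{C}$-in-$V_1$ and $\mathscr{C}$-in-$V_2$). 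This is proved by induction on $\ell(u)$, verbatim parallel to the proof of Proposition~\ref{pp:eqv} with $\alpha$'s and $\beta$'s interchanged: the base case $\ell(u)=0$ follows from Lemma~\ref{lem:l1} and the condition $0\notin\PLC(\Pi_2)$; for the inductive step pick $r\in S$ with $\ell(ur_r)<\ell(u)$, invoke the $\beta$-analogue of Lemma~\ref{lem:1.24} (whose proof goes through unchanged, as the remark after Lemma~\ref{lem:coeffpos} and Remark~\ref{rm:sign} make the $V_2$ side behave identically) to see that $\langle\{r_r,r_s\}\rangle$ is finite and $\ell(u(w_{\{r,s\}}r_s)^{-1})<\ell(u)$, then split into $m_{rs}$ even and $m_{rs}$ odd, using the second formula of Lemma~\ref{lem:storder}~(i) in place of the first to compute $(w_{\{r,s\}}r_s)\beta_s$ in both $V_2$ and $V_2'$, and conclude by the inductive hypothesis exactly as in equations~\eqref{eq:A}--\eqref{eq:EEE}.

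The \emph{uniqueness} clause is then easy: any $W$-equivariant map $\psi\colon\Phi_1(\mathscr{C})\to\Phi_2(\mathscr{C})$ with $\psi(\alpha_s)=\beta_s$ for all $s$ must satisfy $\psi(w\alpha_s)=w\psi(\alpha_s)=w\beta_s=\phi(w\alpha_s)$ for every $w\in W$, $s\in S$, and these exhaust $\Phi_1(\mathscr{C})$, so $\psi=\phi$. The main obstacle, then, is not conceptual but bookkeeping: making sure the induction underlying injectivity really does transfer — concretely, that Lemma~\ref{lem:1.24} and Lemma~\ref{lem:coeffpos} have $\beta$-versions (they do, the former by the symmetry of its proof and the latter by the explicit remark following it) and that the sign-tracking via $\Phi_2^\pm(\mathscr{C})$ is consistent, which is guaranteed by Lemma~\ref{lem:key}~(ii) and Remark~\ref{rm:sign}. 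Since all of this is already in hand, the proof is a routine mirror of the preceding arguments, and I would simply remark that ``going through a similar reasoning as in the proof of Proposition~\ref{pp:eqv}'' — as the text already signals — suffices, spelling out only the reduction to Proposition~\ref{pp:phi} for well-definedness and the one-line uniqueness argument.
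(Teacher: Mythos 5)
Your proposal is correct and follows essentially the same route as the paper: well-definedness of $w\alpha_s\mapsto w\beta_s$ via Proposition~\ref{pp:phi}, uniqueness from $W$-equivariance, and bijectivity by mirroring the induction of Proposition~\ref{pp:eqv} with the roles of $\Pi_1$ and $\Pi_2$ interchanged (which is exactly what the paper means by ``similar reasoning''). The only quibble is that the dual of Proposition~\ref{pp:phi} gives $u\alpha_s=\tfrac{1}{\mu}\alpha_t$ rather than $u\alpha_s=\mu\alpha_t$, but since injectivity only requires the case $\mu=1$ this is immaterial, and you flag the point yourself.
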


For the rest of this paper, the notation $\phi$ will be fixed for the $W$-equivariant bijection in the above 
proposition.
Observe that Remark~\ref{rm:sign} then immediately implies the following:
\begin{corollary}
\label{co:bijection}
Let $\alpha\in \Phi_1(\mathscr{C})$. Then $\alpha\in \Phi_1^+(\mathscr{C})$ if and only if
$\phi(\alpha)\in \Phi_2^+(\mathscr{C})$; and similarly $\alpha\in \Phi_1^-(\mathscr{C})$ 
if and only if $\phi(\alpha)\in \Phi_2^-(\mathscr{C})$.
\qed 
\end{corollary}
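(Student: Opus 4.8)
The plan is to reduce the statement entirely to Remark~\ref{rm:sign}, using nothing more than the defining properties of the bijection $\phi$. First I would write an arbitrary root $\alpha\in\Phi_1(\mathscr{C})$ in the form $\alpha=w\alpha_s$ for some $w\in W$ and $s\in S$; this is possible by the very definition $\Phi_1(\mathscr{C})=W\Pi_1$. Since $\phi$ is $W$-equivariant and satisfies $\phi(\alpha_s)=\beta_s$ for all $s\in S$ (Proposition~\ref{pp:phimap}), this gives $\phi(\alpha)=\phi(w\alpha_s)=w\,\phi(\alpha_s)=w\beta_s$.

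Next I would apply Remark~\ref{rm:sign}, which states precisely that $w\alpha_s\in\Phi_1^+(\mathscr{C})$ if and only if $w\beta_s\in\Phi_2^+(\mathscr{C})$. Combining this with the identity $\phi(\alpha)=w\beta_s$ just obtained yields at once that $\alpha\in\Phi_1^+(\mathscr{C})$ if and only if $\phi(\alpha)\in\Phi_2^+(\mathscr{C})$, which is the first assertion.

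For the negative case I would not repeat the argument but instead deduce it from the positive one together with bijectivity. The equivalence already proved shows both $\phi(\Phi_1^+(\mathscr{C}))\subseteq\Phi_2^+(\mathscr{C})$ and $\phi^{-1}(\Phi_2^+(\mathscr{C}))\subseteq\Phi_1^+(\mathscr{C})$, hence $\phi(\Phi_1^+(\mathscr{C}))=\Phi_2^+(\mathscr{C})$. By Lemma~\ref{lem:key}~(i) we have the disjoint decompositions $\Phi_i(\mathscr{C})=\Phi_i^+(\mathscr{C})\uplus\Phi_i^-(\mathscr{C})$ for $i\in\{1,2\}$, so applying the bijection $\phi$ to complements gives $\phi(\Phi_1^-(\mathscr{C}))=\Phi_2^-(\mathscr{C})$, which is the second assertion.

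There is no genuine obstacle here — the corollary is essentially a restatement of Remark~\ref{rm:sign} — and the only point requiring a moment's care is that the claim concerns the specific map $\phi$ of Proposition~\ref{pp:phimap}: one must first use its $W$-equivariance and its prescribed values on simple roots to rewrite $\phi(\alpha)$ as $w\beta_s$, and one should note that, although the expression $\alpha=w\alpha_s$ is not unique, the value $\phi(\alpha)$ is, so the conclusion is independent of the chosen expression.
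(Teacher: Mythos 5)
Your argument is correct and is essentially the paper's own: the corollary is stated as an immediate consequence of Remark~\ref{rm:sign} applied to $\alpha=w\alpha_s$ and $\phi(\alpha)=w\beta_s$, exactly as you do. Your detour through complements for the negative case is harmless but unnecessary, since Remark~\ref{rm:sign} already covers both signs directly.
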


Note that Proposition \ref{pp:phi} can be generalized to the following:
\begin{lemma}
\label{lem:inv}
Suppose that $\alpha \in \Phi_1(\mathscr{C})$, and suppose that $\lambda$ is a non-zero
constant such that $\lambda \alpha \in \Phi_1(\mathscr{C})$. Then 
$\phi(\lambda \alpha)=\tfrac{1}{\lambda}\phi(\alpha)$.
\end{lemma}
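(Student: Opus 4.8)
The plan is to reduce the statement to Proposition~\ref{pp:phi} by transporting everything along the $W$-equivariant bijections $\pi_1,\pi_2$ of Proposition~\ref{pp:eqv} into the free Coxeter datum $\mathscr{C}'$, where coefficients are unambiguous. First I would write $\alpha=w\alpha_s$ and $\lambda\alpha=v\alpha_t$ for suitable $w,v\in W$ and $s,t\in S$; such expressions exist since $\alpha$ and $\lambda\alpha$ both lie in $\Phi_1(\mathscr{C})=W\Pi_1$. Applying $\pi_1^{-1}$ and using the $W$-equivariance recorded in~\eqref{eq:homs}, we get $\pi_1^{-1}(\alpha)=w\alpha_s'$ and $\pi_1^{-1}(\lambda\alpha)=v\alpha_t'$ in $\Phi_1(\mathscr{C}')$. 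Now the key point is that the proof technique of Proposition~\ref{pp:eqv} actually shows more than injectivity of $\pi_1$: it shows that whenever $w\alpha_s=\mu\,\alpha_t$ in $V_1$ for a nonzero scalar $\mu$, one has $w\alpha_s'=\mu\,\alpha_t'$ in $V_1'$. (This is precisely the "following statement" proved by induction on $\ell(w)$ inside that proof.) Hence I would first establish, for the pair $v^{-1}w$ and the simple roots $\alpha_s,\alpha_t$, that $v^{-1}w\,\alpha_s=\lambda\,\alpha_t$ implies $v^{-1}w\,\alpha_s'=\lambda\,\alpha_t'$; equivalently $w\alpha_s'=\lambda\,v\alpha_t'=v(\lambda\alpha_t')$, so $\pi_1^{-1}(\lambda\alpha)=\lambda\,\pi_1^{-1}(\alpha)$ after identifying $v\alpha_t'=\pi_1^{-1}(\lambda\alpha)$.

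Wait — to apply that statement I need $v^{-1}w\,\alpha_s=\lambda\alpha_t$, which holds because $w\alpha_s=\alpha$ and $v\alpha_t=\lambda\alpha$ give $w\alpha_s=\tfrac1\lambda v\alpha_t$, i.e. $v^{-1}w\,\alpha_s=\tfrac1\lambda\alpha_t$; relabelling, this is the desired form with scalar $\tfrac1\lambda$. So the statement proved inside Proposition~\ref{pp:eqv} gives $v^{-1}w\,\alpha_s'=\tfrac1\lambda\alpha_t'$, hence $w\alpha_s'=\tfrac1\lambda v\alpha_t'$, i.e. $\pi_1^{-1}(\alpha)=\tfrac1\lambda\,\pi_1^{-1}(\lambda\alpha)$, which rearranges to $\pi_1^{-1}(\lambda\alpha)=\lambda\,\pi_1^{-1}(\alpha)$ as claimed.

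Next I would invoke Proposition~\ref{pp:phi} in the same spirit: applied with the element $v^{-1}w$ and simple roots $\alpha_s,\alpha_t$, from $v^{-1}w\,\alpha_s=\tfrac1\lambda\alpha_t$ it yields $v^{-1}w\,\beta_s=\lambda\,\beta_t$. Now translate back through $\phi$: by definition of $\phi$ (Proposition~\ref{pp:phimap}), $\phi(\alpha)=\phi(w\alpha_s)=w\beta_s$ and $\phi(\lambda\alpha)=\phi(v\alpha_t)=v\beta_t$. From $v^{-1}w\,\beta_s=\lambda\,\beta_t$ we obtain $w\beta_s=\lambda\,v\beta_t$, that is $\phi(\alpha)=\lambda\,\phi(\lambda\alpha)$, which is exactly $\phi(\lambda\alpha)=\tfrac1\lambda\phi(\alpha)$.

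The only genuine obstacle is the legitimacy of re-using, as a black box, the inductive "following statement" buried inside the proof of Proposition~\ref{pp:eqv} (and likewise the real content of Proposition~\ref{pp:phi}) — namely that $w\alpha_s=\mu\alpha_t$ forces $w\alpha_s'=\mu\alpha_t'$ for arbitrary nonzero $\mu$, not merely $\mu=\pm1$. Since that statement is proved verbatim there for all nonzero $\mu$, and Proposition~\ref{pp:phi} is stated for arbitrary nonzero $\lambda$, no new induction is needed; the argument above is a pure bookkeeping reduction. I would write it out in a few lines, being careful only to keep track of which side the inverse $v^{-1}w$ sits on and to note that $\alpha$ and $\lambda\alpha$ being in $\Phi_1(\mathscr{C})$ guarantees the needed expressions $w\alpha_s$, $v\alpha_t$ exist.
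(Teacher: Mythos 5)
Your proof is correct and is essentially the paper's own argument: both reduce to Proposition~\ref{pp:phi} via the $W$-equivariance of $\phi$ (the paper conjugates by $w^{-1}$ to land on $\lambda\alpha_s\in\Phi_1(\mathscr{C})$, you use $v^{-1}w$, which is the same bookkeeping). The first two paragraphs about transporting through $\pi_1$ are a harmless but unnecessary detour, since the final paragraph already closes the argument using only Proposition~\ref{pp:phi} and Proposition~\ref{pp:phimap}.
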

\begin{proof}
Write $\alpha= w \alpha_s$ for some $w\in W$ and $s\in S$. The fact that
$\lambda \alpha\in \Phi_1(\mathscr{C})$ implies that
$w^{-1}(\lambda \alpha)=\lambda w^{-1}\alpha =\lambda \alpha_s \in \Phi_1(\mathscr{C})$.
Then $\phi(\lambda \alpha_s) =\frac{1}{\lambda} \phi(\alpha_s)$ by 
Proposition~\ref{pp:phi}. Now it follows from
the $W$-equivariance of $\phi$ that
$$
\phi(\lambda \alpha)=\phi(\lambda w\alpha_s)=\phi(w \lambda \alpha_s)
=w \phi(\lambda \alpha_s) =\frac{1}{\lambda}
w\phi(\alpha_s)=\frac{1}{\lambda}\phi(\alpha).
$$
\end{proof} 

\begin{definition}
\label{df:dp} Let $\mathscr{C}=(\, S, V_1, V_2, \Pi_1, \Pi_2, \langle\,,\,\rangle \,)$ 
be a Coxeter datum. 
\noindent(i)\quad For each $i\in\{1,2\}$ and each  $z \in \Phi^+_i(\mathscr{C})$, define the
\emph{depth} of $z$ (written $\dep_{\mathscr{C}, i}(z)$ ) to be 
$$\dep_{\mathscr{C},i}(z)=\min \{\,\ell(w) \mid \text{ $w\in W$ and $w z\in \Phi^-_i(\mathscr{C})$}\,\}.$$
(ii)\quad For each $i\in\{1,2\}$ and $z_1, z_2 \in \Phi^+_i(\mathscr{C})$,
write $z_1 \preceq_i z_2$ if  there exists $w\in W$  such that
$z_2 = w z_1$, and $\dep_{\mathscr{C},i}(z_2)=\dep_{\mathscr{C},i}(z_1)+\ell(w)$.
Furthermore, we write $z_1 \prec_i z_2$ if
$z_1 \preceq_i z_2$ but $z_1 \neq z_2$. 
\end{definition}

A mild generalization of \cite[Lemma 1.7]{BH93} yields the following:
\begin{lemma}
\label{lem:1.33}
Suppose that $\mathscr{C}=(\,S, V_1, V_2, \Pi_1, \Pi_2, \langle\,,\,\rangle\,)$ is a Coxeter datum.
Let $s\in S$, $\alpha \in \Phi^+_1(\mathscr{C})\setminus \R\{\alpha_s\}$, and $\beta \in
\Phi^+_2(\mathscr{C})\setminus \R\{\beta_s\}$. Then
\begin{equation*}
\dep_{\mathscr{C},1}(r_s \alpha) =
\begin{cases}
\dep_{\mathscr{C},1}(\alpha)-1  \text{   if $\langle \alpha, \beta_s \rangle  >0$} ,\\
\dep_{\mathscr{C},1}(\alpha) \ \ \ \ \ \text{   if $\langle \alpha, \beta_s \rangle = 0$}, \\
\dep_{\mathscr{C},1}(\alpha)+1  \text{   if $\langle \alpha, \beta_s \rangle <0$}; 
\end{cases}
\end{equation*}
and
\begin{equation*}
\dep_{\mathscr{C},2}(r_s \beta) =
\begin{cases}
\dep_{\mathscr{C},2}(\beta)-1  \text{   if $\langle \alpha_s, \beta \rangle  >0$} ,\\
\dep_{\mathscr{C},2}(\beta) \ \ \ \ \ \text{   if $\langle \alpha_s, \beta \rangle = 0$}, \\
\dep_{\mathscr{C},2}(\beta)+1  \text{   if $\langle \alpha_s, \beta \rangle <0$}. 
\end{cases}
\end{equation*}
\qed
\end{lemma}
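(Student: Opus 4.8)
The statement is the non-orthogonal counterpart of the depth lemma \cite[Lemma~1.7]{BH93}, and the plan is to run that argument in the present setting while carrying along the two root systems and allowing for the possible failure of linear independence of simple roots. The two displayed formulas are proved identically, with the roles of $V_1,\Pi_1,\langle\,\cdot\,,\beta_s\rangle$ and $V_2,\Pi_2,\langle\alpha_s,\,\cdot\,\rangle$ interchanged, so I shall only treat the assertion about $\dep_{\mathscr{C},1}$. Three remarks reduce the problem to a single case. First, since $\alpha\notin\R\{\alpha_s\}$ we have $\widehat\alpha\ne\widehat{\alpha_s}$, so Lemma~\ref{lem:simp}(i) gives $r_s\alpha\in\Phi_1^+(\mathscr{C})$ (so the depth is defined), and multiplying on the right by $r_t$ a shortest element realizing one of the depths in question shows $|\dep_{\mathscr{C},1}(r_t z)-\dep_{\mathscr{C},1}(z)|\le 1$ for every $t\in S$ and every $z\in\Phi_1^+(\mathscr{C})\setminus\R\{\alpha_t\}$. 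Secondly, if $\langle\alpha,\beta_s\rangle=0$ then $r_s\alpha=\alpha$, so the middle case is trivial. Thirdly, by Proposition~\ref{pp:p0} (and $\rho_{V_2}(s)(\beta_s)=-\beta_s$) one has $\langle r_s\alpha,\beta_s\rangle=-\langle\alpha,\beta_s\rangle$, while $r_s\alpha\in\Phi_1^+(\mathscr{C})\setminus\R\{\alpha_s\}$ and $r_s(r_s\alpha)=\alpha$, so replacing $\alpha$ by $r_s\alpha$ interchanges the first and third cases. Thus it suffices to prove the implication
\[
\langle\alpha,\beta_s\rangle>0\ \Longrightarrow\ \dep_{\mathscr{C},1}(r_s\alpha)=\dep_{\mathscr{C},1}(\alpha)-1 ,
\]
which I do by induction on $d:=\dep_{\mathscr{C},1}(\alpha)$; by the reductions above, this implication at a given depth is equivalent to the full Lemma at that depth, so the inductive hypothesis may be used in either form.

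The bookkeeping rests on the fact that $\dep_{\mathscr{C},1}(z)=1$ holds exactly when $z$ is a positive scalar multiple of a simple root: ``only if'' is Lemma~\ref{lem:simp}(i), and, after passing to the free datum $\mathscr{C}'$, this condition is the same as $|\supp(z)|=1$, since in $\Phi_1(\mathscr{C}')$ the coefficients are unambiguous and, by Lemma~\ref{lem:l1}, a non-simple positive root has support of size at least two. Consequently, if $d\le 1$ then $\alpha=c\,\alpha_r$ for some $c>0$ and $r\ne s$, so $\langle\alpha,\beta_s\rangle=c\langle\alpha_r,\beta_s\rangle\le 0$ by (C2); the hypothesis $\langle\alpha,\beta_s\rangle>0$ therefore forces $d\ge 2$, and the base of the induction is vacuous.

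For the inductive step, fix a shortest $w$ with $w\alpha\in\Phi_1^-(\mathscr{C})$ together with a reduced expression $w=r_{s_1}\cdots r_{s_d}$. Since $\ell(r_{s_2}\cdots r_{s_d})<d$, minimality forces $r_{s_2}\cdots r_{s_d}\alpha\in\Phi_1^+(\mathscr{C})$, and as $r_{s_1}$ carries this root to $w\alpha\in\Phi_1^-(\mathscr{C})$, Lemma~\ref{lem:simp}(i) gives $r_{s_2}\cdots r_{s_d}\alpha=c\,\alpha_{s_1}$ for some $c>0$. Put $q:=s_d$; then $\alpha\notin\R\{\alpha_q\}$ (otherwise $d=1$), and $r_{s_1}\cdots r_{s_{d-1}}$ carries $r_q\alpha$ into $\Phi_1^-(\mathscr{C})$, so $\dep_{\mathscr{C},1}(r_q\alpha)\le d-1$, hence $=d-1$ by the bound of the first paragraph. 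If $q=s$ this already yields $\dep_{\mathscr{C},1}(r_s\alpha)=d-1$; so assume $q\ne s$. The inductive hypothesis applied to $r_q\alpha$ and the simple root $q$ (legitimate because $\dep_{\mathscr{C},1}(r_q\alpha)<d$ and $r_q\alpha\notin\R\{\alpha_q\}$), together with $r_q(r_q\alpha)=\alpha$, forces $\langle r_q\alpha,\beta_q\rangle<0$, hence $\langle\alpha,\beta_q\rangle=-\langle r_q\alpha,\beta_q\rangle>0$. Combining this with $\langle\alpha,\beta_s\rangle>0$ and (C2),
\[
\langle r_q\alpha,\beta_s\rangle=\langle\alpha,\beta_s\rangle-2\langle\alpha,\beta_q\rangle\langle\alpha_q,\beta_s\rangle\ \ge\ \langle\alpha,\beta_s\rangle>0 .
\]

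The conclusion is then reached inside the dihedral subgroup $\langle r_q,r_s\rangle$. One follows the alternating chain $\alpha,\ r_q\alpha,\ r_s r_q\alpha,\ r_q r_s r_q\alpha,\dots$; each term after the first turns out to have depth less than $d$, so the inductive hypothesis is available at each step, and a computation based on the identity $\langle\alpha_q,\beta_s\rangle\langle\alpha_s,\beta_q\rangle=\cos^2(\pi/m_{qs})$ — which is at least $\tfrac14$ when $m_{qs}<\infty$, and equals the constant $\gamma^2\ge 1$ of (C3) when $m_{qs}=\infty$ — shows that the pairings governing the successive steps remain positive, so that the depth decreases by exactly $1$ along the chain, reaching $w_{\{q,s\}}\alpha$ in the finite case. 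Since $w_{\{q,s\}}$ has one reduced expression beginning with $r_q$ and one beginning with $r_s$ (cf.\ Lemma~\ref{lem:1.24}), the chain $\alpha,\ r_s\alpha,\dots$ starting with $r_s$ must likewise descend step by step, giving $\dep_{\mathscr{C},1}(r_s\alpha)=d-1$ as required. The degenerate possibility that some root along the chain is a scalar multiple of the relevant simple root occurs only at depth $1$, forces $m_{qs}=3$, and is dispatched by a short direct computation; the infinite dihedral case is handled by the analogous direct analysis of the chain. I expect this dihedral analysis — propagating positivity of the pairings along the $\langle r_q,r_s\rangle$-orbit of $\alpha$ and matching it against the two reduced words of $w_{\{q,s\}}$ — to be the only genuinely delicate point; the remainder is a direct translation of \cite[Lemma~1.7]{BH93} or a formal consequence of results already in place, the one new ingredient being the use of the canonical coefficients (through $\mathscr{C}'$) to recognize scalar multiples of simple roots in the non-free setting.
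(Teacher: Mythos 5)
Your overall strategy is the right one: the paper gives no proof of this lemma beyond the citation to \cite[Lemma 1.7]{BH93}, and your reductions (the bound $|\dep_{\mathscr{C},1}(r_t z)-\dep_{\mathscr{C},1}(z)|\le 1$, the triviality of the case $\langle\alpha,\beta_s\rangle=0$, the symmetry exchanging the first and third cases, the characterization of depth-one roots as positive multiples of simple roots, the vacuous base case, and the production of $q\ne s$ with $\langle\alpha,\beta_q\rangle>0$, $\dep_{\mathscr{C},1}(r_q\alpha)=d-1$ and $\langle r_q\alpha,\beta_s\rangle\ge\langle\alpha,\beta_s\rangle>0$) are all correct. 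The gap is the dihedral step, which you flag as the delicate point but then only sketch, and the sketch as written makes two false assertions. First, the alternating chain $\alpha,\ r_q\alpha,\ r_sr_q\alpha,\dots$ cannot in general descend to $w_{\{q,s\}}\alpha$: each successful step lowers the depth by exactly one, so the chain must terminate at a root of depth $1$ (a positive multiple of a simple root, which the next reflection sends negative) after at most $d-1$ steps, and this happens strictly before $m_{qs}$ steps whenever $d\le m_{qs}$. Second, the termination at a multiple of a simple root does not force $m_{qs}=3$. Both failures occur already in the standard rank-two datum with $m_{qs}=5$: the unique positive root $\mu$ of depth $3$ satisfies $\langle\mu,\beta_q\rangle>0$ and $\langle\mu,\beta_s\rangle>0$, and the chain $\mu,\ r_q\mu,\ r_sr_q\mu$ stops at a positive multiple of $\alpha_q$ after two steps, with $w_{\{q,s\}}\mu$ never reached.

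Because the chain stops short of $w_{\{q,s\}}$, the intended conclusion (``$w_{\{q,s\}}$ has a reduced expression beginning with $r_s$, so the $r_s$-first chain also descends'') has nothing to apply to. Concretely, what the descent gives you is an expression $\alpha=c\,v\alpha_u$ with $v$ an alternating word of length $d-1$ beginning with $r_q$, whence only $r_s\alpha=c\,(r_sv)\alpha_u$ with $\ell(r_sv)=d$, i.e.\ the useless bound $\dep_{\mathscr{C},1}(r_s\alpha)\le d+1$; to conclude you must produce an element $v'$ of length at most $d-2$ with $v'(r_s\alpha)$ a positive multiple of a simple root (in the $m_{qs}=5$ example, $r_s\mu$ is a positive multiple of $r_q\alpha_s$, not of $r_sr_qr_s\alpha_q$). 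Establishing this — equivalently, showing that some minimal-length $w$ with $w\alpha\in\Phi_1^-(\mathscr{C})$ admits a reduced expression ending in $r_s$ — is the actual content of the lemma and requires an honest rank-two analysis using the explicit formulas of Lemma~\ref{lem:storder} (in both the finite and infinite cases), which your proposal does not supply. Until that step is carried out, the induction does not close.
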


\begin{lemma}
\label{lem:depeq}
Suppose that $\mathscr{C}=(\,S, V_1, V_2, \Pi_1, \Pi_2, \langle\,,\,\rangle\,)$ is a Coxeter datum, and
$\alpha \in \Phi_1^+(\mathscr{C})$. Then $\dep_{\mathscr{C},1}(\alpha)=\dep_{\mathscr{C},2}(\phi(\alpha))$.
\end{lemma}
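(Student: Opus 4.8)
The plan is to observe that the depth of a positive root is determined entirely by the set of group elements carrying it into the negative system, and that this set is left unchanged when we pass from $\alpha$ to $\phi(\alpha)$. First I would invoke Corollary~\ref{co:bijection} to guarantee that $\phi(\alpha)\in\Phi_2^+(\mathscr{C})$, so that $\dep_{\mathscr{C},2}(\phi(\alpha))$ is defined in the first place. Next, for an arbitrary $w\in W$, the $W$-equivariance of $\phi$ (Proposition~\ref{pp:phimap}) gives $\phi(w\alpha)=w\phi(\alpha)$; applying Corollary~\ref{co:bijection} to the root $w\alpha$ (which ranges over all of $\Phi_1(\mathscr{C})$ as $w$ ranges over $W$) then shows that $w\alpha\in\Phi_1^-(\mathscr{C})$ if and only if $w\phi(\alpha)\in\Phi_2^-(\mathscr{C})$. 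Consequently the two sets
$$\{\,w\in W\mid w\alpha\in\Phi_1^-(\mathscr{C})\,\}\quad\text{and}\quad\{\,w\in W\mid w\phi(\alpha)\in\Phi_2^-(\mathscr{C})\,\}$$
are equal, and taking the minimum of $\ell(w)$ over this common set yields $\dep_{\mathscr{C},1}(\alpha)=\dep_{\mathscr{C},2}(\phi(\alpha))$, as required.

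There is no substantial obstacle here: the statement is a formal consequence of the $W$-equivariance of $\phi$ combined with the fact that $\phi$ respects the decomposition of each root system into positive and negative parts. The only point that needs a moment's care is that Corollary~\ref{co:bijection} must be applied to each root $w\alpha$ in the $W$-orbit of $\alpha$, not merely to $\alpha$ itself; but this causes no difficulty, since $w\alpha\in\Phi_1(\mathscr{C})$ for every $w\in W$, so the hypothesis of the corollary is met throughout.
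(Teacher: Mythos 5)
Your proposal is correct and uses exactly the same ingredients as the paper's proof (Corollary~\ref{co:bijection} together with the $W$-equivariance of $\phi$); the paper phrases it as two inequalities obtained by symmetry, whereas you show the two sets of group elements coincide, which is only a cosmetic repackaging of the same argument.
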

\begin{proof}
Let $w\in W$ be such that $w\alpha \in \Phi^-_1(\mathscr{C})$ and 
$\dep_{\mathscr{C},1}(\alpha)=\ell(w)$. Then Corollary \ref{co:bijection} yields that 
$\phi(w\alpha)\in \Phi_2^-(\mathscr{C})$, and it follows from the $W$-equivariance of $\phi$ that, 
$w(\phi(\alpha))\in \Phi_2^-(\mathscr{C})$. Hence
$$\dep_{\mathscr{C},2}(\phi(\alpha))\leq \ell(w)= \dep_{\mathscr{C},1}(\alpha).$$ By symmetry, 
$\dep_{\mathscr{C},1}(\alpha) \leq \dep_{\mathscr{C},2}(\phi(\alpha))$, whence equality.  
\end{proof}

\begin{lemma}
\label{lem:1.35}
Suppose that $\mathscr{C}=(\,S, V_1, V_2, \Pi_1, \Pi_2, \langle\,,\,\rangle\,)$ is a Coxeter datum. Let
$\alpha \in \Phi_1(\mathscr{C})$ and $s\in S$. Then 
\begin{align*}
&\langle \alpha , \beta_s \rangle >0 \text{ if and only if } \langle \alpha_s, \phi(\alpha) \rangle >0 ; \\
\noalign{\hbox{and}}
&\langle \alpha , \beta_s \rangle =0 \text{ if and only if } \langle \alpha_s, \phi(\alpha) \rangle =0 ; \\
\noalign{\hbox{and}}
&\langle \alpha , \beta_s \rangle <0 \text{ if and only if } \langle \alpha_s, \phi(\alpha) \rangle <0 .
\end{align*}
\end{lemma}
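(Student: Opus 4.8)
The plan is to read the statement off from the effect of a single reflection on the depth function: by Lemma~\ref{lem:1.33} this effect is governed on the $V_1$-side by the sign of $\langle\,\cdot\,,\beta_s\rangle$ and on the $V_2$-side by the sign of $\langle\alpha_s,\,\cdot\,\rangle$, and the two sides are then matched using Lemma~\ref{lem:depeq}. First I would reduce to the case $\alpha\in\Phi_1^+(\mathscr C)$. Since $-\alpha\in\Phi_1(\mathscr C)$ whenever $\alpha\in\Phi_1(\mathscr C)$, Lemma~\ref{lem:inv} applied with $\lambda=-1$ gives $\phi(-\alpha)=-\phi(\alpha)$; thus replacing $\alpha$ by $-\alpha$ negates both $\langle\alpha,\beta_s\rangle$ and $\langle\alpha_s,\phi(\alpha)\rangle$ and leaves the three equivalences unchanged. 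So assume $\alpha\in\Phi_1^+(\mathscr C)$; then $\phi(\alpha)\in\Phi_2^+(\mathscr C)$ by Corollary~\ref{co:bijection}.

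Next I would dispose of the degenerate case $\alpha\in\R\{\alpha_s\}$. Here $\alpha=\lambda\alpha_s$ with $\lambda>0$ (positivity of $\alpha$ forces $\lambda>0$ via (C5)), so $\langle\alpha,\beta_s\rangle=\lambda>0$, while Lemma~\ref{lem:inv} gives $\phi(\alpha)=\lambda^{-1}\beta_s$ and hence $\langle\alpha_s,\phi(\alpha)\rangle=\lambda^{-1}>0$; both quantities are positive, in agreement with the claim. It is convenient to record here, for use in the main case, that $\alpha\in\R\{\alpha_s\}$ if and only if $\phi(\alpha)\in\R\{\beta_s\}$: indeed, if $\phi(\alpha)=\mu\beta_s$ then, applying the analogue of Lemma~\ref{lem:inv} for $\phi^{-1}$ (which sends $\beta_s$ to $\alpha_s$), one gets $\alpha=\phi^{-1}(\mu\beta_s)=\mu^{-1}\alpha_s$. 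Thus $\phi$ respects the partition of $\Phi_1(\mathscr C)$ into scalar multiples of $\alpha_s$ and the rest.

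The main case is $\alpha\in\Phi_1^+(\mathscr C)\setminus\R\{\alpha_s\}$, so that also $\phi(\alpha)\in\Phi_2^+(\mathscr C)\setminus\R\{\beta_s\}$. Applying Lemma~\ref{lem:1.33} on the $V_1$-side, the quantity $\dep_{\mathscr C,1}(r_s\alpha)-\dep_{\mathscr C,1}(\alpha)$ equals $-1$, $0$ or $+1$ according as $\langle\alpha,\beta_s\rangle$ is positive, zero or negative; applying it on the $V_2$-side to $\phi(\alpha)$, the quantity $\dep_{\mathscr C,2}(r_s\phi(\alpha))-\dep_{\mathscr C,2}(\phi(\alpha))$ equals $-1$, $0$ or $+1$ according as $\langle\alpha_s,\phi(\alpha)\rangle$ is positive, zero or negative. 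Now $r_s\phi(\alpha)=\phi(r_s\alpha)$ by the $W$-equivariance of $\phi$, and $r_s\alpha\in\Phi_1^+(\mathscr C)$ (as is implicit in Lemma~\ref{lem:1.33}), so Lemma~\ref{lem:depeq} gives $\dep_{\mathscr C,2}(\phi(\alpha))=\dep_{\mathscr C,1}(\alpha)$ and $\dep_{\mathscr C,2}(r_s\phi(\alpha))=\dep_{\mathscr C,1}(r_s\alpha)$. Hence the two depth differences coincide, and therefore $\langle\alpha,\beta_s\rangle$ and $\langle\alpha_s,\phi(\alpha)\rangle$ have the same sign, which is exactly the asserted trichotomy.

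Given the lemmas already in hand the argument is short, and I do not expect a substantive obstacle; the only points needing care are organisational --- ensuring the ``non-multiple'' hypotheses of Lemma~\ref{lem:1.33} hold simultaneously on the $V_1$- and $V_2$-sides (handled by the remark in the second paragraph), and tracking signs correctly through the reduction to positive roots. One may note in passing that when $\alpha=\alpha_t$ is simple the statement reduces directly to conditions (C1), (C2) and (C4), which is what makes (C4) the natural hypothesis in this context.
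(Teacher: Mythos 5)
Your proposal is correct and follows the same route as the paper, whose proof is the one-line ``combine Lemma~\ref{lem:1.33} and Lemma~\ref{lem:depeq}''; you have merely spelled out the details the paper leaves implicit (reduction to positive roots via Lemma~\ref{lem:inv}, the case $\alpha\in\R\{\alpha_s\}$, the fact that $r_s\alpha$ stays positive so that Lemma~\ref{lem:depeq} applies to it, and the matching of depth differences via $W$-equivariance of $\phi$). No gaps.
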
 
\begin{proof}
Combine Lemma \ref{lem:1.33} and Lemma \ref{lem:depeq}, then the desired result follows.
\end{proof}

In fact, the above gives rise to a more general result:

\begin{corollary}
\label{co:sign1}
Suppose that $\mathscr{C}=(\,S, V_1, V_2, \Pi_1, \Pi_2, \langle\,,\,\rangle\,)$ is a Coxeter datum, and
 $\alpha_1, \alpha_2 \in \Phi_1(\mathscr{C})$. Then
\begin{align*}
\langle \alpha_1, \phi(\alpha_2)\rangle >0 \quad \text{if and only if} \quad \langle \alpha_2, \phi(\alpha_1)\rangle >0;\\
\noalign{\hbox{and}}
\langle \alpha_1, \phi(\alpha_2)\rangle =0 \quad \text{if and only if} \quad \langle \alpha_2, \phi(\alpha_1)\rangle =0;\\
\noalign{\hbox{and}}
\langle \alpha_1, \phi(\alpha_2)\rangle <0 \quad \text{if and only if} \quad \langle \alpha_2, \phi(\alpha_1)\rangle <0.
\end{align*}
\end{corollary}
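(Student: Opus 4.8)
The plan is to reduce the general statement to the special case already recorded in Lemma~\ref{lem:1.35}, in which one of the two roots is a simple root. Since $\Phi_1(\mathscr{C})=W\Pi_1$, write $\alpha_2 = w\alpha_s$ for some $w\in W$ and $s\in S$. By the $W$-equivariance of $\phi$ (Proposition~\ref{pp:phimap}) together with $\phi(\alpha_s)=\beta_s$, we then have $\phi(\alpha_2)=w\beta_s$.

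The next step is to strip off the group element using the $W$-invariance of the bilinear pairing (Lemma~\ref{lem:eqf}):
\[
\langle \alpha_1,\phi(\alpha_2)\rangle \;=\; \langle \alpha_1, w\beta_s\rangle \;=\; \langle w^{-1}\alpha_1,\beta_s\rangle .
\]
Since $\Phi_1(\mathscr{C})$ is $W$-stable, $w^{-1}\alpha_1\in\Phi_1(\mathscr{C})$, so Lemma~\ref{lem:1.35} applies to the root $w^{-1}\alpha_1$ and the index $s$, giving that $\langle w^{-1}\alpha_1,\beta_s\rangle$ has the same sign as $\langle \alpha_s,\phi(w^{-1}\alpha_1)\rangle$. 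Finally I would unwind the latter quantity using $W$-equivariance of $\phi$ and $W$-invariance of the pairing once more:
\[
\langle \alpha_s,\phi(w^{-1}\alpha_1)\rangle \;=\; \langle \alpha_s, w^{-1}\phi(\alpha_1)\rangle \;=\; \langle w\alpha_s,\phi(\alpha_1)\rangle \;=\; \langle \alpha_2,\phi(\alpha_1)\rangle .
\]
Chaining the two displayed strings of equalities through the sign-equivalence supplied by Lemma~\ref{lem:1.35} yields that $\langle\alpha_1,\phi(\alpha_2)\rangle$ and $\langle\alpha_2,\phi(\alpha_1)\rangle$ are simultaneously positive, zero, or negative, which is exactly the three assertions of the corollary.

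There is no real obstacle here: the substantive work has already been carried out in Lemmas~\ref{lem:1.33}, \ref{lem:depeq} and \ref{lem:1.35}. The only points requiring a moment's care are that the $W$-action preserves $\Phi_1(\mathscr{C})$ (so that Lemma~\ref{lem:1.35} is legitimately applied to $w^{-1}\alpha_1$ rather than to a genuine simple root) and that $\phi$ is $W$-equivariant with $\phi(\alpha_s)=\beta_s$; both are available. Note also that the computation is symmetric in $\alpha_1$ and $\alpha_2$, so once the sign of $\langle\alpha_1,\phi(\alpha_2)\rangle$ is matched with that of $\langle\alpha_2,\phi(\alpha_1)\rangle$ in one direction, the converse directions require no separate argument.
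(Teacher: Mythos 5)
Your proposal is correct and follows essentially the same route as the paper's own proof: write $\alpha_2=w\alpha_s$, use $W$-invariance of the pairing and $W$-equivariance of $\phi$ to reduce to $\langle w^{-1}\alpha_1,\beta_s\rangle$, apply Lemma~\ref{lem:1.35} to the root $w^{-1}\alpha_1$, and unwind back to $\langle\alpha_2,\phi(\alpha_1)\rangle$. Nothing is missing.
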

\begin{proof}
Write $\alpha_2 = w\alpha_s $ for some $w\in W$ and $s\in S$. Given the 
 $W$-invariance of $\langle\, ,\,\rangle$  and the  $W$-equivariance of $\phi$, we have
$$
\langle \alpha_1, \phi(\alpha_2)\rangle =\langle \alpha_1, w\beta_s\rangle
=\langle w^{-1}\alpha_1, \beta_s\rangle.
$$
It then follows from Lemma~\ref{lem:1.35} that $\langle w^{-1}\alpha_1,
\beta_s\rangle>0$ precisely when 
$$
\langle \alpha_s, \phi(w^{-1}\alpha_1)\rangle=\langle w\alpha_s,
\phi(\alpha_1)\rangle =\langle \alpha_2, \phi(\alpha_1)\rangle>0.
$$
The rest of the desired result follows in a similar way.
\end{proof}

\section[Comparison With Standard Geometric Realization]
{Comparison with the Standard Geometric Realization of Coxeter Groups}
\label{sec:cwsgr}
In this section we recover the root systems of Coxeter groups in the sense of
\cite{HH81} or \cite{HM} as special cases of root systems arising from a
Coxeter datum. 
Subsequently we give comparison results between such special cases and the more general
root systems arising from a Coxeter datum. These comparisons will 
provide useful reduction of the non-orthogonal representations studied in 
the previous sections into those of \cite[Ch.V]{HH81} or \cite[\S 5.3--5.4]{HM}.

Fix a Coxeter datum $\mathscr{C}=(\, S, V_1, V_2, \Pi_1, \Pi_2,
\langle\,,\,\rangle\,)$, and let $V$ be a vector space over $\R$ with a basis 
$\Pi= \{\,\gamma_s \mid s\in S \,\}$ in bijective correspondence with $S$.
Suppose that $(\, ,\, ) \colon  V\times V \to \R$ is a symmetric bilinear form
satisfying the following conditions:
\begin{itemize}
\item [(C1')] $(\gamma_s, \gamma_s) = 1$, for all $s\in S$;
\item [(C2')] $(\gamma_s, \gamma_t)\leq 0$, for all distinct $s, t \in S$;
\item [(C3')] $(\gamma_s, \gamma_t)^2 =\langle \alpha_s, \beta_t \rangle \langle
\alpha_t, \beta_s \rangle,$ for all $s, t \in S$.
\end{itemize}
Then $\mathscr{C}''=(\,S, V, V,\Pi, \Pi, (\,,\,)\,)$ is
a free Coxeter datum with the same Coxeter parameters $m_{st}$, ($s, t\in S$) 
as those of $\mathscr{C}$, and hence 
$\mathscr{C}$ and $\mathscr{C''}$  are associated to
the same abstract Coxeter system $(W, R)$. It is easily checked that
$\Phi_1(\mathscr{C''})~=~\Phi_2(\mathscr{C''})$, allowing us to write $\Phi$ 
in places of $\Phi_1(\mathscr{C''})$ and $\Phi_2(\mathscr{C''})$. Furthermore, 
we write $\Phi^+$ and $\Phi^-$ for the corresponding set of positive roots 
and negative roots respectively. It is also readily checked that $W$ can be 
faithfully embedded into the orthogonal group of the bilinear form $(\,,\,)$ on $V$.
This recovers the orthogonal representation of $W$ as defined in 
\cite[Ch.V]{BN68}, \cite{HH81} and \cite[\S 5.3--5.4]{HM}.
We refer to such $V$ as the \emph{standard geometric realization (Tits
representation)} of $W$ or simply the \emph{standard Tits
representation} of $W$.

It follows from Lemma \ref{lem:eqf} that $(\,,\,)$ is $W$-invariant. 
It is well-known (and can be readily checked) that 
if $x$ and $\lambda x$ are both in $\Phi$ for some constant $\lambda$, 
then $\lambda=\pm 1$. Since $\mathscr{C''}$ is free,  it follows that each  
$\gamma\in \Phi$  can be written uniquely in the form 
$\gamma=\sum_{s\in S} \lambda_s \gamma_s$.
We say that $\lambda_s$ is the \emph{coefficient} of $\gamma_s$ in $\gamma$, and denote it 
by $\coeff_{s}(\gamma)$.
Finally, to simplify
notation, for any $\gamma \in \Phi$ we write $\dep (\gamma)$ in place of
$\dep_{\mathscr{C''}, 1}(\gamma)(=\dep_{\mathscr{C''}, 2}(\gamma))$, and for
$\gamma_1, \gamma_2 \in \Phi$ we simply write $\gamma_1 \preceq \gamma_2$ when $\gamma_1\preceq_1 \gamma_2$ 
(and $\gamma_1 \preceq_2\gamma_2$).

Similar arguments as for Proposition \ref{pp:eqv} gives us the following:
\begin{proposition}
\label{pp:fs}
There are $W$-equivariant maps $\phi_1 \colon \Phi_1(\mathscr{C}) \to \Phi$, and 
$\phi_2 \colon \Phi_2(\mathscr{C}) \to \Phi$, satisfying
$\phi_1(\alpha_s)= \gamma_s = \phi_2(\beta_s)$,
for all $s\in S$.
\qed
\end{proposition}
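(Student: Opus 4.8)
The plan is to mimic the proof of Proposition~\ref{pp:eqv} almost verbatim, using the free Coxeter datum $\mathscr{C}''$ in place of the free datum $\mathscr{C}'$. First I would set up the linear maps: since $\mathscr{C}''$ is free, its simple roots $\Pi$ form a basis of $V$, so there are unique linear maps $\psi_1\colon V\to V$ and $\psi_2\colon V\to V$ determined by $\psi_1(\alpha'_s)=\gamma_s$ and $\psi_2(\beta'_s)=\gamma_s$ where $\mathscr{C}'$ is the free datum attached to $\mathscr{C}$ from Section~\ref{sec:rts&cc}; composing with the bijections $\pi_i\colon \Phi_i(\mathscr{C}')\leftrightarrow\Phi_i(\mathscr{C})$ of Proposition~\ref{pp:eqv}, it suffices to produce $W$-equivariant maps $\Phi_i(\mathscr{C}')\to\Phi$. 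Exactly as in Section~\ref{sec:rts&cc}, the relations $\psi_i f'_i(r_s)=f''_i(r_s)\psi_i$ hold on generators (both sides send $\alpha'_s\mapsto -\gamma_s$, and agree on the complementary subspace by the reflection formulas, since condition (C3') matches the products of off-diagonal pairings), hence $\psi_i(wz')=w\psi_i(z')$ for all $w\in W$; so each $\psi_i$ is a $W$-module homomorphism carrying simple roots to simple roots, and therefore maps $\Phi_i(\mathscr{C}')$ into $\Phi$ and is $W$-equivariant. Defining $\phi_i=\psi_i\circ\pi_i^{-1}$ gives the desired maps with $\phi_1(\alpha_s)=\gamma_s=\phi_2(\beta_s)$.

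The key steps, in order, are: (1) introduce the free datum $\mathscr{C}'$ and recall $\pi_1,\pi_2$; (2) verify that the symmetric form $(\,,\,)$ on $V$ arises from a free Coxeter datum $\mathscr{C}''$ with the same parameters, which is already stated in the excerpt — in particular condition (C3') guarantees that for each pair $\{s,t\}$ the value $(\gamma_s,\gamma_t)^2$ equals the product $\langle\alpha_s,\beta_t\rangle\langle\alpha_t,\beta_s\rangle$ occurring in (C3), so $m''_{st}=m_{st}$; (3) define $\psi_i\colon V'_i\to V$ linearly on the bases $\Pi'_i$ by $\alpha'_s\mapsto\gamma_s$, $\beta'_s\mapsto\gamma_s$; (4) check the intertwining $\psi_i\rho_{V'_i}(s)=\rho_V(s)\psi_i$ on generators and extend to all of $W$, exactly paralleling the derivation of \eqref{eq:homs}; (5) conclude $\psi_i(\Phi_i(\mathscr{C}'))\subseteq\Phi$ and $W$-equivariance; (6) set $\phi_i=\psi_i\circ\pi_i^{-1}$ and read off $\phi_1(\alpha_s)=\gamma_s=\phi_2(\beta_s)$.

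The one point requiring genuine care — the ``main obstacle'', though a mild one — is step (4): one must check that $\rho_V(s)\psi_1=\psi_1\rho_{V'_1}(s)$ as linear maps $V'_1\to V$, not merely on $\Pi'_1$. Since $\rho_{V'_1}(s)(\alpha'_t)=\alpha'_t-2\langle\alpha'_t,\beta'_s\rangle'\alpha'_s$ and $\rho_V(s)(\gamma_t)=\gamma_t-2(\gamma_t,\gamma_s)\gamma_s$, the identity on the basis element $\alpha'_t$ reduces to $(\gamma_t,\gamma_s)=\langle\alpha'_t,\beta'_s\rangle'=\langle\alpha_t,\beta_s\rangle$. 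This equality of scalars does \emph{not} follow from (C3') alone (which only pins down the product $(\gamma_s,\gamma_t)(\gamma_t,\gamma_s)=(\gamma_s,\gamma_t)^2$ up to sign of each factor); however, by (C2') and (C2) both $(\gamma_s,\gamma_t)$ and $\langle\alpha_s,\beta_t\rangle$ are $\le 0$, and the $m_{st}=2$ cases are handled since then all four quantities vanish by the remark following (C4), so the signs match and the intertwining is exactly that needed for the reflection formulas to transport. (This mirrors precisely the verification carried out in Section~\ref{sec:rts&cc} for the pair $\mathscr{C},\mathscr{C}'$, where the pairings literally agree; here they agree up to the sign bookkeeping just described.) The remaining steps are formal and identical to the proof of Proposition~\ref{pp:eqv}, so no further detail is needed.
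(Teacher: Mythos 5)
There is a genuine gap, and it sits exactly at the point you flagged as the ``main obstacle'': the intertwining relation $\psi_1\rho_{V_1'}(s)=\rho_V(s)\psi_1$ is false in general, and your sign argument does not rescue it. Applying both sides to $\alpha_t'$, you need the scalar identity $(\gamma_t,\gamma_s)=\langle\alpha_t,\beta_s\rangle$. But (C3') together with (C2') forces $(\gamma_t,\gamma_s)=-\sqrt{\langle\alpha_s,\beta_t\rangle\langle\alpha_t,\beta_s\rangle}$, the (negative) \emph{geometric mean} of the two off-diagonal pairings; this equals $\langle\alpha_t,\beta_s\rangle$ only when $\langle\alpha_s,\beta_t\rangle=\langle\alpha_t,\beta_s\rangle$, and the whole point of the non-orthogonal setting is that the pairing need not be symmetric. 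Matching signs is not enough: two negative numbers of different magnitude are still different. The paper's own running example in Section~\ref{sec:rts&cc} kills the construction: there $\langle\alpha_r,\beta_s\rangle=-1/4$ and $\langle\alpha_s,\beta_r\rangle=-1$, so $(\gamma_r,\gamma_s)=-1/2$, and your $\psi_1$ sends $\rho_{V_1'}(s)\alpha_r'=\alpha_r'+\tfrac{1}{2}\alpha_s'$ to $\gamma_r+\tfrac{1}{2}\gamma_s$, whereas $r_s\gamma_r=\gamma_r+\gamma_s$. In particular $\psi_1$ does not even map $\Phi_1(\mathscr{C}')$ into $\Phi$ (by Lemma~\ref{lem:ge1} no positive root of $\Phi$ has a nonzero coefficient equal to $1/2$), so $\phi_1=\psi_1\circ\pi_1^{-1}$ is not a map into $\Phi$ at all.

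The underlying obstruction is that $\phi_1$ is genuinely not the restriction of any linear map: it rescales roots non-linearly (compare Lemma~\ref{lem:inv}, where the companion map $\phi$ satisfies $\phi(\lambda\alpha)=\tfrac{1}{\lambda}\phi(\alpha)$). What ``similar arguments as for Proposition~\ref{pp:eqv}'' actually means is: define $\phi_1$ directly on roots by $w\alpha_s\mapsto w\gamma_s$ and prove well-definedness, i.e.\ that $w\alpha_s=\lambda\alpha_t$ with $\lambda>0$ implies $w\gamma_s=\gamma_t$. This is established by the same induction on $\ell(w)$ as in the proof of Proposition~\ref{pp:eqv}, using Lemma~\ref{lem:1.24} and the dihedral formulas of Lemma~\ref{lem:storder}; carried out in $V$ these give $(w_{\{r,s\}}r_s)\gamma_s=\gamma_s$ in the even case and $(w_{\{r,s\}}r_s)\gamma_s=\frac{-\cos(\pi/m_{rs})}{(\gamma_r,\gamma_s)}\gamma_r=\gamma_r$ in the odd case, so the induction closes; $W$-equivariance is then automatic from the definition. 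Your steps (1)--(2) and (6) are harmless framing, but steps (3)--(5) must be replaced by this direct well-definedness induction on the root system itself.
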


\begin{remark}
We stress that unlike $\pi_1$, $\pi_2$ of Proposition~\ref{pp:eqv} 
and $\phi$ of Proposition~\ref{pp:phimap}, the new maps
$\phi_1$ and $\phi_2$ are not injective in general, and we shall 
see more on this fact in Lemma~\ref{lem:coeff0}~(ii) below.
\end{remark}

\begin{lemma}
\label{lem:sign}
Suppose that $\alpha \in  \Phi_1(\mathscr{C})$. Then $\alpha \in
\Phi^+_1(\mathscr{C})$ implies that $\phi_1(\alpha)
\in \Phi^+$, and $\alpha \in \Phi^-_1(\mathscr{C})$ implies that 
$\phi_1(\alpha)\in \Phi^-$.
\end{lemma}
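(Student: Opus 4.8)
The plan is to reduce the statement to the combinatorial characterization of positive roots via the length function, which is available for \emph{every} Coxeter datum through Lemma~\ref{lem:key}. Write $\alpha = w\alpha_s$ for some $w \in W$ and $s \in S$. Since $\phi_1$ is $W$-equivariant with $\phi_1(\alpha_s) = \gamma_s$ (Proposition~\ref{pp:fs}), we have $\phi_1(\alpha) = w\gamma_s$. Observe that $\mathscr{C}''$ is a Coxeter datum sharing the abstract Coxeter system $(W,R)$ --- hence also the length function $\ell$ --- with $\mathscr{C}$, and that $\Phi = \Phi_1(\mathscr{C}'') = \Phi_2(\mathscr{C}'')$ with $\Phi = \Phi^+ \uplus \Phi^-$ by Lemma~\ref{lem:key}(i) applied to $\mathscr{C}''$.

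First I would note that, by Lemma~\ref{lem:key}(i) and (ii) applied to $\mathscr{C}$, the element $\alpha = w\alpha_s$ lies in $\Phi_1^+(\mathscr{C})$ precisely when $\ell(wr_s) = \ell(w)+1$, and in $\Phi_1^-(\mathscr{C})$ precisely when $\ell(wr_s) = \ell(w)-1$; these two alternatives are mutually exclusive and exhaustive. Likewise, applying Lemma~\ref{lem:key}(i) and (ii) to $\mathscr{C}''$, the root $w\gamma_s \in \Phi$ lies in $\Phi^+$ precisely when $\ell(wr_s) = \ell(w)+1$, and in $\Phi^-$ precisely when $\ell(wr_s) = \ell(w)-1$. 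Combining the two characterizations: if $\alpha \in \Phi_1^+(\mathscr{C})$ then $\ell(wr_s) = \ell(w)+1$, whence $\phi_1(\alpha) = w\gamma_s \in \Phi^+$; and if $\alpha \in \Phi_1^-(\mathscr{C})$ then $\ell(wr_s) = \ell(w)-1$, whence $\phi_1(\alpha) = w\gamma_s \in \Phi^-$. (Alternatively, once the positive case is established one can deduce the negative case from $\Phi_1^-(\mathscr{C}) = -\Phi_1^+(\mathscr{C})$ together with the identity $\phi_1(-\alpha) = -\phi_1(\alpha)$, which follows from $-\alpha = (wr_s)\alpha_s$; but doing both cases directly via lengths is cleaner.)

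I do not expect a substantial obstacle here; the work is essentially a translation through the length-function description of $\Phi_i^{\pm}$. The two points that require a little care are: (a) the choice of $w$ and $s$ with $\alpha = w\alpha_s$ is immaterial, both because the characterization of membership in $\Phi_1^{\pm}(\mathscr{C})$ in terms of $\ell(wr_s)$ holds for every such representative and because $\phi_1$ is already known to be well defined; and (b) Lemma~\ref{lem:key}(ii) is stated with the simultaneous conditions ``$w\alpha_s \in \Phi_1^+(\mathscr{C})$ and $w\beta_s \in \Phi_2^+(\mathscr{C})$'', but by Remark~\ref{rm:sign} (and, for $\mathscr{C}''$, the equality $\Phi_1(\mathscr{C}'') = \Phi_2(\mathscr{C}'')$) these collapse to a single sign condition, so the comparison between the two data goes through without friction.
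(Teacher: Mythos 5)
Your argument is correct and is essentially identical to the paper's own proof: both write $\alpha = w\alpha_s$, use Lemma~\ref{lem:key}~(ii) applied to $\mathscr{C}$ to read off the sign of $\alpha$ from $\ell(wr_s)$, and then apply Lemma~\ref{lem:key}~(ii) to $\mathscr{C}''$ together with the $W$-equivariance of $\phi_1$ to conclude that $w\gamma_s$ has the corresponding sign in $\Phi$. The extra care you take about well-definedness and the collapsing of the two sign conditions via Remark~\ref{rm:sign} is sound but not needed beyond what the paper already establishes.
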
 
\begin{proof}
We may write $\alpha = w \alpha_r$, for suitable
$w\in W$ and $r\in S$. If $\alpha \in \Phi^+_1(\mathscr{C})$, then
Lemma~\ref{lem:key}~(ii) yields that $\ell(w r_r) = \ell(w)+1$, in which case
Lemma \ref{lem:key}~(ii) applied to the Coxeter datum $\mathscr{C''}$ yields
that 
$\phi_1(\alpha)=\phi_1(w \alpha_r) = w
\phi_1(\alpha_r)= w \gamma_r \in \Phi^+$.
Likewise we see that $\alpha \in \Phi^-_1(\mathscr{C})$ implies that
$\phi_1(\alpha) \in\Phi^-$. 
\end{proof}

Using the same argument as in Lemma \ref{lem:depeq} we have:
\begin{lemma}
\label{lem:dpinv}
Suppose that $\alpha \in \Phi^+_1(\mathscr{C})$. Then 
$\dep_{\mathscr{C},1}(\alpha) = \dep(\phi_1(\alpha))$.
\qed
\end{lemma}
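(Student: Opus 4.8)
The plan is to mimic verbatim the proof of Lemma~\ref{lem:depeq}, substituting the (non-injective) map $\phi_1$ from Proposition~\ref{pp:fs} for the bijection $\phi$, and Lemma~\ref{lem:sign} for Corollary~\ref{co:bijection}. First note that, since $\alpha\in\Phi^+_1(\mathscr{C})$, Lemma~\ref{lem:sign} gives $\phi_1(\alpha)\in\Phi^+$, so that $\dep(\phi_1(\alpha))$ is well-defined and a proof of the stated equality makes sense.

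To get $\dep(\phi_1(\alpha))\le\dep_{\mathscr{C},1}(\alpha)$, I would choose $w\in W$ with $\ell(w)=\dep_{\mathscr{C},1}(\alpha)$ and $w\alpha\in\Phi^-_1(\mathscr{C})$. Applying the $W$-equivariance of $\phi_1$ and Lemma~\ref{lem:sign} to $w\alpha$ gives $w\phi_1(\alpha)=\phi_1(w\alpha)\in\Phi^-$, whence $\dep(\phi_1(\alpha))\le\ell(w)=\dep_{\mathscr{C},1}(\alpha)$. For the reverse inequality I would choose $v\in W$ with $\ell(v)=\dep(\phi_1(\alpha))$ and $v\phi_1(\alpha)\in\Phi^-$, so that $\phi_1(v\alpha)=v\phi_1(\alpha)\in\Phi^-$; it then remains to deduce $v\alpha\in\Phi^-_1(\mathscr{C})$, which yields $\dep_{\mathscr{C},1}(\alpha)\le\ell(v)=\dep(\phi_1(\alpha))$, and equality follows.

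The one point requiring care — the only place this argument differs in substance from Lemma~\ref{lem:depeq} — is that $\phi_1$ is not injective, so I cannot recover $v\alpha$ by inverting $\phi_1$. Instead I would argue by sign: $v\alpha\in\Phi_1(\mathscr{C})=\Phi^+_1(\mathscr{C})\uplus\Phi^-_1(\mathscr{C})$ by Lemma~\ref{lem:key}~(i), and were $v\alpha\in\Phi^+_1(\mathscr{C})$, Lemma~\ref{lem:sign} would force $\phi_1(v\alpha)\in\Phi^+$, contradicting $\phi_1(v\alpha)\in\Phi^-$ since $\Phi=\Phi^+\uplus\Phi^-$ (Lemma~\ref{lem:key}~(i) applied to $\mathscr{C}''$). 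Hence $v\alpha\in\Phi^-_1(\mathscr{C})$, as needed. I expect this sign bookkeeping — playing the two disjoint-union decompositions off one another to compensate for the failure of injectivity — to be the only mild obstacle; everything else is a direct transcription of the proof of Lemma~\ref{lem:depeq}.
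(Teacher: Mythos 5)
Your proof is correct and is essentially the argument the paper intends, since the paper simply asserts the lemma follows ``using the same argument as in Lemma~\ref{lem:depeq}.'' Your explicit handling of the non-injectivity of $\phi_1$ --- deducing $v\alpha\in\Phi^-_1(\mathscr{C})$ from $\phi_1(v\alpha)\in\Phi^-$ via the two disjoint-union decompositions and Lemma~\ref{lem:sign} rather than by inverting the map --- is exactly the adaptation the paper leaves implicit.
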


\begin{corollary}
\label{co:sign}
Suppose that $\alpha \in \Phi_1(\mathscr{C})$, and $s\in S$. Then 
\begin{align*}
&(\phi_1(\alpha), \gamma_s) > 0 \text{ if and only if } \langle \alpha, \beta_s \rangle > 0,\\
\noalign{\hbox{and}}
&(\phi_1(\alpha), \gamma_s) = 0 \text{ if and only if } \langle \alpha, \beta_s \rangle = 0,\\
\noalign{\hbox{and}}
&(\phi_1(\alpha), \gamma_s) < 0 \text{ if and only if } \langle \alpha, \beta_s \rangle < 0.
\end{align*}
\end{corollary}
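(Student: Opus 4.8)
The plan is to transcribe the proof of Lemma~\ref{lem:1.35}, replacing Lemma~\ref{lem:depeq} by Lemma~\ref{lem:dpinv} and invoking Lemma~\ref{lem:1.33} for \emph{both} $\mathscr{C}$ and the free Coxeter datum $\mathscr{C}''$ (whose bilinear pairing is $(\,,\,)$ and whose simple roots are the $\gamma_s$, so that all earlier results apply to it). By Lemma~\ref{lem:key}~(i) every root of $\Phi_1(\mathscr{C})$ is positive or negative, and writing $\alpha = w\alpha_t$ one has $-\alpha = (wr_t)\alpha_t$, so $\phi_1(-\alpha) = -\phi_1(\alpha)$ by $W$-equivariance of $\phi_1$; since both $\langle\,\cdot\,,\beta_s\rangle$ and $(\phi_1(\,\cdot\,),\gamma_s)$ change sign under $\alpha\mapsto-\alpha$, it suffices to treat $\alpha\in\Phi_1^+(\mathscr{C})$, in which case $\phi_1(\alpha)\in\Phi^+$ by Lemma~\ref{lem:sign}.

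First I would establish the auxiliary fact that, for $\alpha\in\Phi_1^+(\mathscr{C})$, we have $\alpha\in\R\{\alpha_s\}$ if and only if $\phi_1(\alpha)=\gamma_s$. If $\alpha=\mu\alpha_s$ with $\mu>0$, then $r_s\phi_1(\alpha)=\phi_1(r_s\alpha)=-\phi_1(\alpha)\in\Phi^-$ while $\phi_1(\alpha)\in\Phi^+$, so $\widehat{\phi_1(\alpha)}\in N_1(r_s)=\{\widehat{\gamma_s}\}$ by Lemma~\ref{lem:simp}~(i) applied to $\mathscr{C}''$; as the only positive scalar multiple of $\gamma_s$ lying in $\Phi$ is $\gamma_s$ itself, this forces $\phi_1(\alpha)=\gamma_s$. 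Conversely, if $\phi_1(\alpha)=\gamma_s$ then $\phi_1(r_s\alpha)=-\gamma_s\in\Phi^-$, so $r_s\alpha\notin\Phi_1^+(\mathscr{C})$ by Lemma~\ref{lem:sign}, whence $\widehat\alpha\in N_1(r_s)=\{\widehat{\alpha_s}\}$ by Lemma~\ref{lem:simp}~(i) applied to $\mathscr{C}$, i.e.\ $\alpha\in\R\{\alpha_s\}$. This settles the degenerate case $\alpha=\mu\alpha_s$ ($\mu>0$): there $\langle\alpha,\beta_s\rangle=\mu>0$ and $(\phi_1(\alpha),\gamma_s)=(\gamma_s,\gamma_s)=1>0$, consistent with the claim.

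For the remaining case, assume $\alpha\in\Phi_1^+(\mathscr{C})\setminus\R\{\alpha_s\}$, so $\phi_1(\alpha)\in\Phi^+\setminus\R\{\gamma_s\}$ by the auxiliary fact. Then $r_s\alpha\in\Phi_1^+(\mathscr{C})$ — otherwise $\widehat\alpha\in N_1(r_s)=\{\widehat{\alpha_s}\}$, contradicting $\alpha\notin\R\{\alpha_s\}$ — and similarly $r_s\phi_1(\alpha)\in\Phi^+$, so all four depths below are defined. Lemma~\ref{lem:1.33} applied to $\mathscr{C}$ gives that $\dep_{\mathscr{C},1}(r_s\alpha)-\dep_{\mathscr{C},1}(\alpha)$ equals $-1$, $0$, or $+1$ according as $\langle\alpha,\beta_s\rangle$ is positive, zero, or negative, and applied to $\mathscr{C}''$ gives that $\dep(r_s\phi_1(\alpha))-\dep(\phi_1(\alpha))$ equals $-1$, $0$, or $+1$ according as $(\phi_1(\alpha),\gamma_s)$ is positive, zero, or negative. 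These two integers coincide: $r_s\phi_1(\alpha)=\phi_1(r_s\alpha)$ by $W$-equivariance, while Lemma~\ref{lem:dpinv} gives $\dep_{\mathscr{C},1}(\alpha)=\dep(\phi_1(\alpha))$ and $\dep_{\mathscr{C},1}(r_s\alpha)=\dep(\phi_1(r_s\alpha))$. Hence $\langle\alpha,\beta_s\rangle$ and $(\phi_1(\alpha),\gamma_s)$ have the same sign, which is exactly the three asserted equivalences.

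Since the argument is in essence a transcription of the proof of Lemma~\ref{lem:1.35}, I expect no substantial new idea to be needed; the one place calling for care — the anticipated minor obstacle — is the handling of the side conditions $\alpha\notin\R\{\alpha_s\}$ and $\phi_1(\alpha)\notin\R\{\gamma_s\}$, namely checking that $r_s$ keeps such roots positive so that the relevant depths are defined, together with the auxiliary equivalence $\alpha\in\R\{\alpha_s\}\Leftrightarrow\phi_1(\alpha)=\gamma_s$ that disposes of the degenerate case.
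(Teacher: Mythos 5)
Your proof is correct and follows exactly the route the paper takes: the paper's own proof is the one-liner ``Follows from Lemma~\ref{lem:dpinv} and Lemma~\ref{lem:1.33} applied to $\mathscr{C''}$,'' i.e.\ compare the depth increments of $r_s$ on $\alpha$ and on $\phi_1(\alpha)$ using the depth-preservation of $\phi_1$. Your extra care with the degenerate case $\alpha\in\R\{\alpha_s\}$ (which could also be dispatched by citing Lemma~\ref{lem:coeff0}~(ii)) and with positivity of $r_s\alpha$ merely fills in details the paper leaves implicit.
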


\begin{proof}
Follows from Lemma \ref{lem:dpinv} and Lemma \ref{lem:1.33} applied to $\mathscr{C''}$.
\end{proof}

Next we have a well-known result on Coxeter groups (a proof can be found in
\cite{BBHT}, in the discussion immediately before Lemma 2.1) 
which we shall use repeatedly in later calculations.

\begin{lemma}
\label{lem:minleng}
Suppose that $I\subseteq S$ and $w\in W$. Let $W_I$ denote the standard 
parabolic subgroup in $W$ corresponding to $I$, as defined in Remark~\ref{rm:para}. 
Choose $w' \in w W_I$ to be of minimal length in the left coset 
of $W_{I}$ in $W$ containing $w$. Then
$\ell(w'v)=\ell(w')+\ell(v)$ for all $v \in W_I$. 
\qed
\end{lemma}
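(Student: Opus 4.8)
The plan is to deduce this from the exchange condition for the Coxeter system $(W,R)$, arguing by induction on $\ell(v)$. First I would record the seed of the induction: since $w'$ has minimal length in $wW_I$ and $r_s\in W_I$ for every $s\in I$, we have $\ell(w'r_s)\ge\ell(w')$, and hence $\ell(w'r_s)=\ell(w')+1$ for all $s\in I$, because $\ell(w'r_s)$ differs from $\ell(w')$ by exactly $1$ (Lemma~\ref{lem:key}~(ii)). This is precisely the assertion for all $v\in W_I$ with $\ell(v)\le 1$, so it remains to run the inductive step.

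Given $v\in W_I$ with $\ell(v)\ge 1$, I would write $v=v''r_s$ with $s\in I$, $v''\in W_I$ and $\ell(v'')=\ell(v)-1$; by the inductive hypothesis $\ell(w'v'')=\ell(w')+\ell(v'')$, so it suffices to prove $\ell(w'v''r_s)=\ell(w'v'')+1$. Supposing instead that $\ell(w'v''r_s)<\ell(w'v'')$, I would concatenate reduced expressions $w'=r_{a_1}\cdots r_{a_p}$ and $v''=r_{b_1}\cdots r_{b_q}$ (legitimate as a reduced expression for $w'v''$ since $\ell(w'v'')=p+q$) and apply the exchange condition with the generator $r_s$, so that $w'v''r_s$ is obtained from this word by deleting one letter. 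The crux is the ensuing dichotomy. If the deleted letter lies in the $v''$-block, then left-cancelling $w'$ shows $v''r_s$ has an expression of length $q-1$, contradicting $\ell(v''r_s)=\ell(v)=q+1$. If the deleted letter lies in the $w'$-block, then $w'v''r_s=\widehat{w'}\,v''$ with $\ell(\widehat{w'})\le p-1$, whence $w't=\widehat{w'}$ where $t:=v''r_s(v'')^{-1}\in W_I$; but then $w't\in w'W_I=wW_I$ and $\ell(w't)<\ell(w')$, contradicting the minimality of $w'$. Thus $\ell(w'v''r_s)=\ell(w')+\ell(v)$, and the induction closes.

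I expect the only delicate point to be making this dichotomy airtight — in particular observing that the two cases exhaust all possibilities for the deleted letter and that $t=v''r_s(v'')^{-1}$ genuinely lies in the subgroup $W_I$, which is immediate from Remark~\ref{rm:para}. As an alternative staying entirely inside the paper's framework, one could instead show that $N_1(w')\cap\widehat{\Phi_{1I}}=\emptyset$: minimality of $w'$ forces $w'$ to carry every positive root of $\Phi_{1I}$ to a positive root of $\Phi_1$, which I would prove by induction on $\dep_{\mathscr{C}_I,1}$ using Lemma~\ref{lem:1.33} (the base case being the simple roots and their positive multiples, all of which have depth $1$, and the step using that $\gamma=r_s\gamma+2\langle\gamma,\beta_s\rangle\alpha_s$ with both $w'(r_s\gamma)$ and $w'\alpha_s$ positive). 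Then, writing $N_1(w'v)=v^{-1}N_1(w')\dotplus N_1(v)$ by Lemma~\ref{lem:simp}~(iii) and noting $N_1(v)\subseteq\widehat{\Phi_{1I}}$ from formula~(\ref{eq:N1}), one gets $N_1(v)\cap v^{-1}N_1(w')=\emptyset$, hence $N_1(v)\subseteq N_1(w'v)$, hence $\ell(w'v)=\ell(w')+\ell(v)$ by Lemma~\ref{lem:simp}~(iv). I would present the exchange-condition argument, since it is shorter, and refer to \cite{BBHT} for a textbook treatment.
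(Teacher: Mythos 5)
The paper offers no internal proof to compare yours against: Lemma~\ref{lem:minleng} is stated with a \qed\ and the reader is referred to the discussion preceding Lemma~2.1 of \cite{BBHT}. Your exchange-condition argument is the standard textbook proof, and its central dichotomy is sound: deleting a letter from the $v''$-block contradicts $\ell(v''r_s)=\ell(v'')+1$, while deleting from the $w'$-block yields $\widehat{w'}=w't$ with $t=v''r_s(v'')^{-1}\in W_I$ and $\ell(\widehat{w'})\le\ell(w')-1$, contradicting the minimality of $w'$ in $w'W_I=wW_I$. The one step you pass over is the very first move of the inductive step: writing $v=v''r_s$ with $s\in I$, $v''\in W_I$ and $\ell(v'')=\ell(v)-1$ presupposes that some $s\in I$ satisfies $\ell(vr_s)<\ell(v)$, where $\ell$ is the length in $W$ rather than the word length of $v$ in the generators indexed by $I$; this compatibility of the two length functions is of essentially the same depth as the lemma itself and should not be assumed silently. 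It is, however, easily supplied from the paper's own machinery: for $v\in W_I\setminus\{1\}$, Lemma~\ref{lem:key}~(ii) applied to the datum $\mathscr{C}_I$ produces $s\in I$ with $v\alpha_s$ a negative linear combination of $\Pi_{1I}$, hence of $\Pi_1$ (the two actions agree by Remark~\ref{rm:para}), and then Lemma~\ref{lem:key}~(ii) for $\mathscr{C}$ gives $\ell(vr_s)=\ell(v)-1$. The same implicit fact resurfaces in your alternative $N_1$-based sketch when you invoke formula~(\ref{eq:N1}) to get $N_1(v)\subseteq\widehat{\Phi_{1I}}$, since that formula requires a reduced expression of $v$ using only letters from $I$. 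With this point made explicit, either of your arguments is a complete proof; the $N_1$-based one has the merit of staying entirely inside the paper's root-theoretic framework, whereas the exchange-condition route is shorter and matches what the cited source does.
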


\begin{proposition}
\label{pp:1.44}
For each $\alpha \in \Phi_1(\mathscr{C})$ and each $r\in S$, 
$$\coeff_{r}(\alpha)\coeff_{r}(\phi(\alpha)) \geq
(\coeff_{r}(\phi_1(\alpha)))^2. $$
\end{proposition}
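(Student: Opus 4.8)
The plan is to reduce the inequality to a statement about the ``free'' Coxeter datum $\mathscr{C}'$ built in Section~\ref{sec:rts&cc}, and then to exploit the fact that $\mathscr{C}''$ sits ``below'' $\mathscr{C}'$ via a further quotient map. First I would set up notation: write $\alpha = w\alpha_s$ for suitable $w \in W$ and $s \in S$, and recall the three comparison maps in play — the $W$-equivariant bijection $\pi_1 \colon \Phi_1(\mathscr{C}') \to \Phi_1(\mathscr{C})$ of Proposition~\ref{pp:eqv}, its analogue $\pi_2$ on the $V_2$-side, the bijection $\phi \colon \Phi_1(\mathscr{C}) \to \Phi_2(\mathscr{C})$ of Proposition~\ref{pp:phimap}, and the (non-injective) maps $\phi_1,\phi_2$ of Proposition~\ref{pp:fs}. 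By definition $\coeff_r(\alpha) = \coeff_r(\pi_1^{-1}(\alpha))$, $\coeff_r(\phi(\alpha)) = \coeff_r(\pi_2^{-1}(\phi(\alpha)))$, and $\coeff_r(\phi_1(\alpha))$ is the (genuine) coefficient in the free datum $\mathscr{C}''$. The key observation is that there is a $W$-equivariant linear surjection $\psi_1 \colon V_1' \to V$ sending $\alpha_s' \mapsto \gamma_s$ which intertwines $\pi_1$ with $\phi_1$ (i.e. $\phi_1 \circ \pi_1 = \psi_1$ on $\Phi_1(\mathscr{C}')$, by uniqueness of $W$-equivariant maps agreeing on simple roots), and similarly a $\psi_2 \colon V_2' \to V$ with $\phi_2 \circ \pi_2 = \psi_2$. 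Since $\psi_1$ is coordinate-projection-compatible with the bases $\Pi_1'$ and $\Pi$, we get $\coeff_r(\phi_1(\alpha)) = \coeff_r(\psi_1(\pi_1^{-1}\alpha)) = \coeff_r(\pi_1^{-1}\alpha) = \coeff_r(\alpha)$ — wait, that would make the inequality an equality, so this is too naive; the point is rather that $\psi_1$ collapses several $\alpha_s'$ with the same $\gamma_s$-image only when several generators have equal diagrams, which does not happen here since $S$ indexes both; the genuine subtlety is that $\phi_1$ itself is not injective on roots even though $S$ is fixed.

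So the real argument must run on the $V_1'$-side directly. Here is the plan in order. Step 1: fix $r \in S$ and reduce, using $W$-equivariance of all maps and $W$-invariance of all three pairings (Lemma~\ref{lem:eqf} for $\langle\,,\,\rangle$ and $\langle\,,\,\rangle'$, and its $\mathscr{C}''$-analogue for $(\,,\,)$), to the case where $\alpha$ lies in a specific normal form with respect to the left coset $wW_{\{r\}}$; concretely, pick $w$ of minimal length in its coset $wW_{\{r\}}$ (Lemma~\ref{lem:minleng}), so that $\ell(wr_r) = \ell(w)+1$. Step 2: translate both sides into statements about $\mathscr{C}'$ via $\pi_1,\pi_2$. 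Using Proposition~\ref{pp:phi} and its consequences, the element $\pi_2^{-1}(\phi(\alpha)) = w\beta_s'$ in $\Phi_2(\mathscr{C}')$, so $\coeff_r(\phi(\alpha)) = \coeff_r(w\beta_s')$ and $\coeff_r(\alpha) = \coeff_r(w\alpha_s')$. Meanwhile $\coeff_r(\phi_1(\alpha)) = \coeff_r(w\gamma_s)$ in $\Phi \subseteq V$. Step 3 (the crux): prove the pointwise inequality
\begin{equation*}
\coeff_r(w\alpha_s')\cdot \coeff_r(w\beta_s') \geq \bigl(\coeff_r(w\gamma_s)\bigr)^2
\end{equation*}
by induction on $\ell(w)$, using the explicit reflection formulas. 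When $\ell(w) = 0$ both sides are $\delta_{rs}$ and equality holds. For the inductive step write $w = r_t w''$ with $\ell(w'') = \ell(w)-1$; apply the reflection $\rho_{V_1'}(t)$, $\rho_{V_2'}(t)$, $\rho_V(t)$ respectively. The coefficient of $\alpha_r'$ only changes when $r = t$ (it becomes $\coeff_t - 2\langle \text{(the root)}, \beta_t'\rangle'$), and the analogous thing happens for $\beta_r'$ and $\gamma_r$. Denote the root in question at stage $w''$ by $\alpha^{(1)} = w''\alpha_s' \in \Phi_1(\mathscr{C}')$ (positive, by the coset-minimality arrangement of Step~1), $\beta^{(1)} = w''\beta_s'$, $\gamma^{(1)} = w''\gamma_s$. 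One has $\langle \alpha^{(1)}, \beta_t'\rangle' = \langle \pi_1\alpha^{(1)}, \beta_t\rangle$, $\langle \alpha_t', \beta^{(1)}\rangle' = \langle \alpha_t, \pi_2 \beta^{(1)}\rangle$, and $(\gamma^{(1)}, \gamma_t)$. The case $r \neq t$ is immediate from the inductive hypothesis. The case $r = t$ requires showing, abbreviating $a = \coeff_r(\alpha^{(1)})$, $b = \coeff_r(\beta^{(1)})$, $c = \coeff_r(\gamma^{(1)})$, $x = -2\langle \pi_1\alpha^{(1)}, \beta_r\rangle \geq 0$, $y = -2\langle \alpha_r, \pi_2\beta^{(1)}\rangle \geq 0$, $z = -2(\gamma^{(1)}, \gamma_r)$ (signs from Corollary~\ref{co:sign1} and Corollary~\ref{co:sign} forcing $x,y$ to have the same sign, and $z^2 = xy$ by (C3')!), that
\begin{equation*}
(a+x)(b+y) \geq (c+z)^2,
\end{equation*}
given $ab \geq c^2$ and $xy = z^2$. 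By Cauchy–Schwarz (AM–GM on the cross terms) $ay + bx \geq 2\sqrt{ab}\sqrt{xy} \geq 2|c||z| \geq 2cz$, so $(a+x)(b+y) = ab + ay+bx+xy \geq c^2 + 2cz + z^2 = (c+z)^2$. This is the heart of the matter.

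The main obstacle I anticipate is Step~1 together with the sign-matching needed to make Step~3 go through: one must ensure that throughout the induction the relevant roots stay positive (so that their canonical coefficients are the ``correct'' non-negative quantities and $\dep$/length bookkeeping works), and one must know that $\langle \pi_1\alpha^{(1)}, \beta_r\rangle$ and $\langle \alpha_r, \pi_2\beta^{(1)}\rangle$ have the same sign and that $(\gamma^{(1)}, \gamma_r)^2$ equals their product. The sign-agreement of the two $V_i$-pairings is exactly Corollary~\ref{co:sign1} (applied after transporting $\pi_2\beta^{(1)} = \phi(\pi_1\alpha^{(1)})$, using Proposition~\ref{pp:phimap}), and the comparison with $(\gamma^{(1)},\gamma_r)$ is Corollary~\ref{co:sign} plus the $W$-invariance and (C3'); but one still needs the key quadratic identity $(\,\cdot\,,\gamma_r)^2 = \langle\,\cdot\,,\beta_r\rangle\langle\alpha_r,\phi(\cdot)\rangle$ for arbitrary roots, not just simple ones — this should follow by another short $W$-equivariance reduction to the simple-root case, where it is precisely condition (C3')/(C3) up to the scaling, together with Proposition~\ref{pp:phi}. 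Packaging these sign/quadratic facts cleanly, rather than the AM–GM step itself, will be where the real work lies; once they are in hand the induction is routine.
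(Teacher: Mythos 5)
Your overall strategy (transfer everything to the free data, induct on $\ell(w)$, finish with AM--GM on the cross terms) is in the right spirit, but the inductive step as you have set it up does not close. You peel one generator off the \emph{left}, $w=r_tw''$, so that only the $t$-th coordinate changes, by $x=-2\langle \pi_1\alpha^{(1)},\beta_t\rangle$ on the $V_1$ side (and similarly $y$, $z$). You then assert $x,y\ge 0$; Corollaries \ref{co:sign1} and \ref{co:sign} only give that $x$, $y$ and $z$ share a \emph{sign}, and that sign is genuinely uncontrolled: already in the dihedral group with $m_{st}=3$ one has $r_s r_t\alpha_s=\alpha_t$ with intermediate root $r_t\alpha_s=\alpha_s+\alpha_t$ and $\langle \alpha_s+\alpha_t,\beta_s\rangle=1/2>0$, so $x<0$ while all roots involved stay positive. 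When $x,y,z<0$ the needed inequality $ay+bx\ge 2cz$ reads $a|y|+b|x|\le 2c|z|$, which is the \emph{reverse} of what AM--GM provides; and the purely formal implication you invoke is false, e.g.\ $a=4$, $b=1$, $c=2$, $x=y=z=-1$ satisfies $ab\ge c^2$ and $xy=z^2$ yet $(a+x)(b+y)=0<1=(c+z)^2$. A second, independent problem is the identity $xy=z^2$, i.e.\ $(\phi_1(\alpha^{(1)}),\gamma_t)^2=\langle \pi_1\alpha^{(1)},\beta_t\rangle\langle\alpha_t,\phi(\pi_1\alpha^{(1)})\rangle$ for a general root $\alpha^{(1)}$: the $W$-equivariance reduction you sketch does not work because only one argument of the pairing is simple (you cannot move one argument without moving the other), and in fact the paper establishes only the inequality $\ge$ between these quantities (Proposition \ref{pp:fmq}), by a separate induction that logically comes \emph{after} Proposition \ref{pp:1.44}. (Your opening "Step 1" coset reduction modulo $W_{\{r\}}$ also plays no role, since $\coeff_r$ is not $W$-invariant.)

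The paper avoids both difficulties by choosing a right descent $t$ (so $\ell(wr_t)=\ell(w)-1$) and splitting off the \emph{entire} dihedral right segment: $w=w_1w_2$ with $w_1$ of minimal length in $w\langle\{r_s,r_t\}\rangle$ and $w_2$ alternating in $r_s,r_t$. The dihedral formulas of Lemma \ref{lem:storder} give $w_2\alpha_s=p\alpha_s+\lambda q\alpha_t$, $w_2\beta_s=p\beta_s+\tfrac{q}{\lambda}\beta_t$, $w_2\gamma_s=p\gamma_s+q\gamma_t$ with the \emph{same} $p,q\ge 0$ and $\lambda>0$, while Lemma \ref{lem:minleng} guarantees $w_1\alpha_s$ and $w_1\alpha_t$ are positive roots with $\ell(w_1)<\ell(w)$. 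Hence all six coefficients $y,y',y'',z,z',z''$ are nonnegative, every cross term in the expansion of $xx'-x''^2$ carries a nonnegative prefactor $p^2$, $q^2$ or $pq$, and AM--GM applies legitimately; no identity or inequality between the pairings is needed at all. If you want to salvage your one-step approach you would have to handle the case $\langle \pi_1\alpha^{(1)},\beta_t\rangle>0$ by a different mechanism; absorbing the whole rank-two segment at once, as the paper does, is precisely what makes that case disappear.
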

\begin{proof}
Replace $\alpha$ by $-\alpha$ if needed, we may assume that $\alpha \in
\Phi^+_1(\mathscr{C})$, and we may write $\alpha =w \alpha_s$, where $w\in W$
and
$s\in S$. The proof is based on  an induction on $\ell(w)$. If $\ell(w)=0$ then
the result clearly holds with an equality. Thus we may assume that 
$\ell(w) \geq 1$, and choose
$t\in S$ such that $\ell(wr_t)=\ell(w)-1$. Observe that $w=w_1 w_2$, where
$w_1$ is of minimal length in the coset $w \langle \{r_s, r_t \}\rangle$, and
$w_2$ is an alternating product of $r_s$ and $r_t$, ending in $r_t$. Then Lemma
\ref{lem:minleng} yields that $\ell(w)= \ell(w_1)+\ell(w_2)$, $\ell(w_1
r_s)=\ell(w_1)+1$, and $\ell(w_1 r_t )= \ell(w_1)+1$. Consequently  $w_1 \alpha_s\in
\Phi^+_1(\mathscr{C})$ and
$w_1 \alpha_t \in \Phi^+_1(\mathscr{C})$ by Lemma~\ref{lem:key}~(ii). 
The formulas in Lemma~\ref{lem:storder}
yield
that $w_2 \alpha_s = p\alpha_s+ \lambda q \alpha_t$, where $\lambda$ is a
positive constant and $pq \geq 0$. If $p$ and $q$ are both negative, then 
$$\alpha=w\alpha_s = w_1 w_2 \alpha_s = w_1(p\alpha_s +\lambda q\alpha_t)= p
w_1\alpha_s + \lambda q w_1\alpha_t\in \Phi_1^-(\mathscr{C}),$$
contradicting the assumption that $\alpha\in \Phi_1^+(\mathscr{C})$. Therefore
$p, q \geq 0$. By Lemma~\ref{lem:l1}, $\{\alpha_s, \alpha_t\}$ is
linearly independent, hence $\coeff_s(w_2\alpha_s)=p$, and
$\coeff_t(w_2 \alpha_s)=\lambda q$. Similar calculations as in Lemma~\ref{lem:storder}  yield that 
$$ w_2 \gamma_s = p \gamma_s + q \gamma_t,\quad\text{and}\quad w_2 \beta_s
= p \beta_s + \tfrac{q}{\lambda} \beta_t.$$
And Lemma ~\ref{lem:l1} implies that 
$\coeff_s(w_2 \beta_s)=p$ and $\coeff_t(w_2\beta_s)=\tfrac{q}{\lambda}$.
Next we set 
\begin{align*}
&x = \coeff_{r}(\alpha) \text{ , } x' = \coeff_{r}(\phi(\alpha)) \text{ , }
x'' = \coeff_{r}(\phi_1(\alpha));\\
\noalign{\hbox{and}}
&y = \coeff_{r}(w_1\alpha_s)\text{ , } y'  = \coeff_{r}(w_1 \beta_s) \text{ ,
} y'' = \coeff_{r}(w_1 \gamma_s);\\
\noalign{\hbox{and}}
&z = \coeff_{r}(w_1\alpha_t)\text{ , } z' = \coeff_{r}(w_1 \beta_t) \text{ , }
z'' = \coeff_{r}(w_1 \gamma_t).
\end{align*}
Then
\begin{align}
\label{eq:sum}
x x' - x''^2&= (p y+\lambda q z)(py'+\tfrac{1}{\lambda}q z')-(py''+qz'')^2\nonumber\\
&= p^2 (yy'-y''^2)+q^2 (zz'-z''^2)+pq (\tfrac{1}{\lambda}yz' + \lambda z y' - 2
y'' z'').
\end{align}
Since $\ell(w_1)<\ell(w)$, it follows from the inductive hypothesis that 
$$yy' \geq (y'')^2 \qquad\text{and}\qquad zz' \geq (z'')^2,
$$
and so the first two summands in the last line of (\ref{eq:sum}) are nonnegative.
Next apply the geometric mean and arithmetic mean
inequality to the terms $\tfrac{1}{\lambda}yz'$ and $\lambda y'z$ , and we can conclude that 
$\tfrac{1}{\lambda} yz'+\lambda y'z\geq 2\sqrt{yy' zz'}$. But the inductive hypothesis  yields 
that $ yy' zz' \geq (y'' z'')^{2}$, 
showing that the third summand in the last line of(\ref{eq:sum}) is also nonnegative, whence
$xx' -x''^2 \geq 0$, and the desired result follows by induction.
\end{proof}

\begin{lemma}
\label{lem:coeff0}
Suppose that $\alpha \in \Phi_1(\mathscr{C})$.

\noindent (i)\quad Let $t\in S$. Then 
$$\begin{cases}
\coeff_t(\alpha)=0 &\text{if and only if } \coeff_t(\phi_1(\alpha))=0,\\
\coeff_{t}(\alpha) > 0 &\text{if and only if } \coeff_{t} (\phi_1(\alpha)) > 0,\\
\coeff_{t}(\alpha) < 0 &\text{if and only if } \coeff_{t} (\phi_1(\alpha)) < 0 .
\end{cases}
$$
\noindent (ii) \quad The situation $\phi_1(\alpha) =\pm\gamma_s$ ($s\in S$) arises
if and only if $\alpha =\lambda \alpha_s$, for some nonzero constant $\lambda$.
\end{lemma}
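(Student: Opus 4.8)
The plan is to prove (i) by rerunning the induction that underlies Proposition~\ref{pp:1.44}, and then to read off (ii) from~(i). For (i), I would first reduce to the case $\alpha\in\Phi_1^+(\mathscr{C})$. Writing a root as $\alpha=w\alpha_r$ we have $-\alpha=(wr_r)\alpha_r$, so $\phi_1(-\alpha)=-\phi_1(\alpha)$ and, since $\pi_1^{-1}(-\alpha)=-\pi_1^{-1}(\alpha)$, also $\coeff_t(-\alpha)=-\coeff_t(\alpha)$ and $\coeff_t(\phi_1(-\alpha))=-\coeff_t(\phi_1(\alpha))$ for every $t\in S$; since $\Phi_1(\mathscr{C})=\Phi_1^+(\mathscr{C})\uplus\Phi_1^-(\mathscr{C})$ by Lemma~\ref{lem:key}, each of the three equivalences for an arbitrary root follows from the corresponding one for a positive root. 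When $\alpha\in\Phi_1^+(\mathscr{C})$ every $\coeff_t(\alpha)\ge 0$, and $\coeff_t(\phi_1(\alpha))\ge 0$ because $\phi_1(\alpha)\in\Phi^+$ (Lemma~\ref{lem:sign}); so only the equivalence $\coeff_t(\alpha)=0\iff\coeff_t(\phi_1(\alpha))=0$ needs an argument.

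I would prove this by induction on $\ell(w)$, where $\alpha=w\alpha_s$; the case $\ell(w)=0$ is immediate. For $\ell(w)\ge 1$, since $\alpha\in\Phi_1^+(\mathscr{C})$ forces $\ell(wr_s)>\ell(w)$ (Lemma~\ref{lem:key}), I may pick $t\ne s$ with $\ell(wr_t)<\ell(w)$ and invoke the very factorization $w=w_1w_2$ from the proof of Proposition~\ref{pp:1.44}: there $w_1$ has minimal length in $w\langle r_s,r_t\rangle$, $w_2$ is the reduced alternating product of $r_s,r_t$ ending in $r_t$, $\ell(w_1)<\ell(w)$, the roots $w_1\alpha_s$ and $w_1\alpha_t$ are positive, and, as computed there, $w_2\alpha_s=p\alpha_s+\lambda q\alpha_t$ and $w_2\gamma_s=p\gamma_s+q\gamma_t$ with the \emph{same} $p,q\ge 0$ and with $\lambda>0$. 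Applying $w_1$ and using the $W$-equivariance of $\phi_1$ (Proposition~\ref{pp:fs}), one reads off, for every $r\in S$, that $\coeff_r(\alpha)=p\,\coeff_r(w_1\alpha_s)+\lambda q\,\coeff_r(w_1\alpha_t)$ while $\coeff_r(\phi_1(\alpha))=p\,\coeff_r(\phi_1(w_1\alpha_s))+q\,\coeff_r(\phi_1(w_1\alpha_t))$, all coefficients on the right being non-negative. By the inductive hypothesis $\coeff_r(w_1\alpha_s)=0\iff\coeff_r(\phi_1(w_1\alpha_s))=0$, and likewise for $w_1\alpha_t$; since $\lambda>0$ and a sum of non-negative reals vanishes precisely when each summand does, the two expressions vanish simultaneously, which closes the induction.

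For (ii): if $\phi_1(\alpha)=\pm\gamma_s$, then $\coeff_t(\phi_1(\alpha))$ is nonzero for $t=s$ and zero otherwise, so by (i) the same holds for $\coeff_t(\alpha)$; hence $\pi_1^{-1}(\alpha)=\coeff_s(\alpha)\,\alpha_s'$, and applying $\pi_1$ gives $\alpha=\coeff_s(\alpha)\,\alpha_s=\lambda\alpha_s$ with $\lambda:=\coeff_s(\alpha)\ne 0$. Conversely, if $\alpha=\lambda\alpha_s\in\Phi_1(\mathscr{C})$ with $\lambda\ne 0$, write $\alpha=w\alpha_r$; the proof of Proposition~\ref{pp:eqv} gives $w\alpha_r'=\lambda\alpha_s'$, so $\pi_1^{-1}(\alpha)=\lambda\alpha_s'$, i.e. $\coeff_t(\alpha)$ is $\lambda$ for $t=s$ and $0$ otherwise. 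By (i), $\coeff_t(\phi_1(\alpha))$ is then nonzero for $t=s$ and zero otherwise, so $\phi_1(\alpha)=c\gamma_s$ for some $c\ne 0$; since $\phi_1(\alpha)$ and $\gamma_s$ both lie in $\Phi$, and a scalar multiple of an element of $\Phi$ can lie in $\Phi$ only when the scalar is $\pm 1$, we conclude $c=\pm 1$, i.e. $\phi_1(\alpha)=\pm\gamma_s$.

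The step I expect to be the main obstacle is the implication $\coeff_r(\phi_1(\alpha))=0\Rightarrow\coeff_r(\alpha)=0$: Proposition~\ref{pp:1.44} supplies only the inequality $\coeff_r(\alpha)\coeff_r(\phi(\alpha))\ge\coeff_r(\phi_1(\alpha))^2$, which yields the reverse implication but nothing in this direction, so one genuinely has to re-run its induction while tracking which canonical coefficients vanish, rather than quoting it as a black box.
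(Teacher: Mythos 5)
Your proposal is correct and follows essentially the same route as the paper: the same reduction to positive roots, the same factorization $w=w_1w_2$ through the dihedral coset with the parallel expansions $w_2\alpha_r=p\alpha_r+\lambda q\alpha_s$ and $w_2\gamma_r=p\gamma_r+q\gamma_s$, and the same induction on $\ell(w)$. The only cosmetic difference is that you carry both directions of the vanishing equivalence through the induction, whereas the paper dispatches one direction by citing Proposition~\ref{pp:1.44} and only inducts on the implication $\coeff_t(\phi_1(\alpha))=0\Rightarrow\coeff_t(\alpha)=0$.
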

\begin{proof}
(i)\quad Lemma \ref{lem:key} (i) applied to the Coxeter datum $\mathscr{C''}$ yields that
$\Phi=\Phi^+\uplus \Phi^-$. Hence we only need to verify that $\coeff_t(\alpha)=0$ if and only
if $\coeff_t(\phi_1(\alpha))=0$, for the rest of (i) follows readily from Lemma~\ref{lem:sign}. 
By Proposition \ref{pp:1.44} we only need to show that the condition
$\coeff_{t}(\phi_1(\alpha))=0$ implies that $\coeff_{t}(\alpha)=0$. 
Replacing $\alpha$ by $-\alpha$ if needs be, we may assume that $\alpha \in
\Phi^+_1(\mathscr{C})$, and write $\alpha =w \alpha_r$ for some $w\in W$ and $r\in S$. If
$\ell(w)=0$ then there is nothing to prove. Thus we may assume that $\ell(w)\geq
1$ and proceed by an induction on $\ell(w)$. Choose $s\in S$ such that
$\ell(wr_s) = \ell(w)-1$. Now  $w = w_1 w_2$, where $w_1\in w \langle\{ r_r,
r_s\} \rangle$ is of minimal length, and $w_2$ is an alternating product of $r_r
$ and $r_s$ ending in $r_s$. Then Lemma \ref{lem:minleng} implies that $w_2$ is a right 
segment of $w$. Since $\alpha=w \alpha_r$ is positive, it follows from 
Lemma~\ref{lem:simp}~(iv) that 
$w_2 \alpha_r$ is positive too.
Direct calculations as those in Lemma~\ref{lem:storder} show that
$$w_2 \gamma_r = p \gamma_r + q \gamma_s, \quad w_2 \alpha_r = p \alpha_r +
\lambda q \alpha_s \ \text{ and } \ w_2 \beta_r = p\beta_r
+\tfrac{q}{\lambda}\beta_s$$
for some non-negative constants $p$ and $q$, and some positive constant $\lambda$.
Now Lemma~\ref{lem:l1} yields that $\coeff_r(w_2 \alpha_r)=p$, $\coeff_s(w_2\alpha_r)=\lambda q$, 
$\coeff_r(w_2\beta_r) =p$ and $\coeff_s(w_2\beta_r)=\tfrac{q}{\lambda}$. 
Then 
\begin{align*}
0&=\coeff_{t}(\phi_1(\alpha))=\coeff_{t}(w\gamma_r)=\coeff_{t}
(w_1 (p\gamma_r+q\gamma_s))\\
 &=p\coeff_{t}(w_1\gamma_r)+q\coeff_{t}(w_1\gamma_s).
\end{align*}   
By Lemma \ref{lem:minleng} and Lemma \ref{lem:key} (ii) 
(applied to the Coxeter datum $\mathscr{C''}$), $w_1 \gamma_r$ and
$w_1 \gamma_s$ are both positive, and hence it follows from the above that 
$$p\coeff_{t}(w_1 \gamma_r) = q \coeff_{t}(w_1\gamma_s)=0.$$
Then the inductive hypothesis yields that 
$$p\coeff_{t}(w_1\alpha_r) = q \coeff_{t}(w_1 \alpha_s)=0,$$
and therefore 
$$
\coeff_{t}(\alpha)=\coeff_{t}(w\alpha_r)=p\coeff_{t}(w_1\alpha_r)+\lambda q \coeff_{t}(w_1\alpha_s)=0
$$
as required, and (i) follows by induction.

\noindent (ii)\quad Follows readily from part (i) above.
\end{proof}

\begin{remark}
\label{rem:also0}
Suppose that $\beta \in \Phi_2(\mathscr{C})$, and $t\in S$. Then the same 
arguments as those used in the proof of the above Lemma yield that 
$$\begin{cases}
\coeff_t(\beta)=0 &\text{ if and only if  } \coeff_t(\phi_2(\beta))=0,\\
\coeff_{t}(\beta) > 0 &\text{ if and only if  } \coeff_{t} (\phi_2(\beta)) > 0,\\
\coeff_{t}(\beta) < 0 &\text{ if and only if  } \coeff_{t} (\phi_2(\beta)) < 0 .
\end{cases}
$$
\end{remark}

The next result is taken from \cite{BB98}: 
\begin{lemma}\textup{(Brink \cite[Proposition 2.1]{BB98})}
\label{lem:ge1}
Suppose that $\gamma \in \Phi$ and $r\in S$.
Then $\coeff_{r}(\gamma) >0$ implies that $\coeff_{r}(\gamma)\geq
1$. 

Further, suppose that $0 < \coeff_{r} (\gamma) <2$. Then either
$\coeff_{r}(\gamma) =1$ or $\coeff_{r}(\gamma) = 2
\cos(\frac{\pi}{m_{r_1 r_2}})$, where $r_1$, $r_2\in S$ with $4 \leq m_{r_1 r_2}
< \infty$. 
\qed
\end{lemma}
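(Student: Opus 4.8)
The root system $\Phi=\Phi_1(\mathscr{C}'')=\Phi_2(\mathscr{C}'')$ lives inside the space $V$, whose basis is $\Pi=\{\,\gamma_s\mid s\in S\,\}$, and it is built from the $W$-invariant symmetric bilinear form $(\,,\,)$; thus $\coeff_s$ records a genuine coordinate of a root, exactly as in the classical setting of \cite[Ch.V]{BN68} or \cite[\S5.3]{HM}. The plan is simply to reprove the statement by the argument of \cite[Proposition~2.1]{BB98}, which goes through verbatim here: the only properties of $(\,,\,)$ it needs are $(\gamma_s,\gamma_s)=1$ (condition (C1')), $(\gamma_s,\gamma_t)\le0$ for distinct $s,t$ (condition (C2')), and the estimate $(\gamma_s,\gamma_t)\le-\tfrac12$ whenever $m_{st}\ge3$, which is immediate from (C3') since then $(\gamma_s,\gamma_t)^2=\langle\alpha_s,\beta_t\rangle\langle\alpha_t,\beta_s\rangle\ge\cos^2(\pi/3)=\tfrac14$. (A reader content to run $\mathscr{C}''$ with $(\gamma_s,\gamma_t)=-1$ at the infinite bonds may just cite \cite{BB98}.)

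Both assertions go by induction on $\dep(\gamma)$ (Definition~\ref{df:dp}, specialised to $\mathscr{C}''$ with the abbreviation fixed just above the statement). If $\dep(\gamma)=1$ then $\gamma=\gamma_t$ for some $t\in S$ and $\coeff_r(\gamma)\in\{0,1\}$, so both claims hold. If $\dep(\gamma)\ge2$ then $\gamma$ is not a scalar multiple of a simple root, and since $(\gamma,\gamma)=1=\sum_{t}\coeff_t(\gamma)(\gamma,\gamma_t)>0$ with all $\coeff_t(\gamma)\ge0$ there is some $s\in S$ with $(\gamma,\gamma_s)>0$; by Lemma~\ref{lem:1.33} applied to $\mathscr{C}''$, $\gamma'':=r_s\gamma\in\Phi^+$ has $\dep(\gamma'')=\dep(\gamma)-1$, and the same lemma forces $(\gamma'',\gamma_s)<0$. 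Now $\gamma=r_s\gamma''=\gamma''-2(\gamma'',\gamma_s)\gamma_s$, so $\coeff_r(\gamma)=\coeff_r(\gamma'')$ when $r\ne s$ and we are done by induction, while for $r=s$ we have
\begin{equation*}
\coeff_r(\gamma)=\coeff_r(\gamma'')-2(\gamma'',\gamma_r)=\coeff_r(\gamma'')-2\sum_{t\in S}\coeff_t(\gamma'')\,(\gamma_t,\gamma_r),
\end{equation*}
in which each $\coeff_t(\gamma'')$ is $0$ or at least $1$ by induction, and each summand with $t\ne r$ is $\le0$. If $\coeff_r(\gamma'')>0$ this gives $\coeff_r(\gamma)\ge\coeff_r(\gamma'')\ge1$; if $\coeff_r(\gamma'')=0$, then any single nonzero summand already contributes at least $1$ to $\coeff_r(\gamma)$, using $\coeff_t(\gamma'')\ge1$ and $(\gamma_t,\gamma_r)\le-\tfrac12$. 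Either way $\coeff_r(\gamma)$ is $0$ or $\ge1$, which is the first assertion.

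For the refinement one carries the hypothesis $0<\coeff_r(\gamma)<2$ through the same induction. Under this bound a careful analysis of the displayed relation, using the inductive description of the $\coeff_t(\gamma'')$, collapses the general step to a single rank-two dihedral subsystem: $\coeff_r(\gamma)$ turns out to equal one of the coefficients occurring in the positive roots of the parabolic root system attached to a single pair $\{r_t,r_r\}$. Those coefficients, read off from Lemma~\ref{lem:storder} (or Remarks~\ref{rmk:st}) applied inside this dihedral subsystem, are $\sin(j\pi/m_{tr})/\sin(\pi/m_{tr})$ for $1\le j<m_{tr}$ when $m_{tr}<\infty$, and grow like $1,3,5,\dots$ when $m_{tr}=\infty$; the only ones lying strictly between $0$ and $2$ are $1$ and $2\cos(\pi/m_{tr})$ with $4\le m_{tr}<\infty$. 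The one genuinely delicate point — and the main obstacle — is precisely this collapse to the dihedral case: keeping exact track, under the constraint $<2$, of which of the $\coeff_t(\gamma'')$ and which products $\coeff_t(\gamma'')(\gamma_t,\gamma_r)$ can simultaneously occur, so as to exclude every configuration but the dihedral one. The peeling-off induction itself is routine, and in any event the entire lemma may simply be quoted from \cite{BB98}.
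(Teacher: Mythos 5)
The paper does not prove this lemma at all: it is quoted verbatim from Brink \cite[Proposition 2.1]{BB98} with a \qed, so your fallback position --- checking that $\mathscr{C}''$ satisfies exactly the hypotheses of Brink's setting (namely $(\gamma_s,\gamma_s)=1$, $(\gamma_s,\gamma_t)=-\cos(\pi/m_{st})$ for $m_{st}<\infty$ and $(\gamma_s,\gamma_t)\le-1$ for $m_{st}=\infty$, which follow from (C1')--(C3') and (C2')) and then citing \cite{BB98} --- is precisely what the paper does, and is sufficient.

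Your additional self-contained argument for the first assertion is correct and worth having: the depth induction, the observation that $(\gamma,\gamma)=1>0$ forces some $(\gamma,\gamma_s)>0$, the case split on $r\ne s$ versus $r=s$, and the use of $(\gamma_t,\gamma_r)\le-\tfrac12$ whenever $m_{tr}\ge3$ together with the inductive dichotomy $\coeff_t(\gamma'')\in\{0\}\cup[1,\infty)$ all check out. (One small point of care in the $r=s$ branch: the inequality $\coeff_r(\gamma)\ge\coeff_r(\gamma'')$ comes from $(\gamma'',\gamma_r)<0$ applied to the \emph{whole} inner product, not term by term in your displayed sum, since the $t=r$ summand there is negative; your text does invoke the right fact, but the display could mislead.) For the second assertion, however, you have only described the key step --- the reduction of the general inductive step to a single dihedral parabolic under the constraint $0<\coeff_r(\gamma)<2$ --- without executing it; as you yourself say, that collapse is the genuinely delicate point, so as written this part is a sketch, not a proof. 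Since the paper itself supplies no proof and you explicitly license quoting \cite{BB98}, this does not invalidate the proposal, but if you intend the reproof to stand on its own you must supply that combinatorial analysis.
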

Combining the results in Proposition~\ref{pp:1.44} to Lemma~\ref{lem:ge1} (inclusive) and 
an argument similar to the one used in the proof of Proposition~\ref{pp:1.44}, 
 we may deduce the following: 
\begin{proposition}
\label{pp:geq1}
Let $\alpha \in \Phi_1(\mathscr{C})$ and  $t\in S$ be arbitrary. Then
$$
\coeff_{t}(\alpha) > 0 \text{ if and only if } \coeff_{t}(\phi(\alpha)) >0,
$$ 
and in this case $\coeff_{t}(\alpha) \coeff_{t}(\phi(\alpha))\geq
1$. In particular, 
$$\phi(\supp(\alpha))=\supp (\phi(\alpha)).$$
Furthermore, suppose that  
$1\leq
\coeff_{t}(\alpha)\coeff_{t}(\phi(\alpha)) <4$.
Then either 
$$\coeff_{t}(\alpha)\coeff_{t}(\phi(\alpha)) =1, \text{ or else } 
\coeff_{t}(\alpha)\coeff_{t}(\phi(\alpha)) =4 \cos^2(\frac{\pi}{m}),$$
where $m=m_{r_1 r_2}$, for some $r_1, r_2 \in S$ with $4\leq m < \infty$.
In particular, if $\alpha \in \Phi^+_1(\mathscr{C})$ then
$\coeff_{t} (\alpha) \coeff_{t}(\phi(\alpha)) =1$  if and only if
$\coeff_{t}(\phi_1(\alpha)) =1$.
\qed
\end{proposition}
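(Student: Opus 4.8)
The plan is to reduce every assertion to the orthogonal realization via the identity $\phi_2\circ\phi=\phi_1$. First I would observe that $\phi_2\circ\phi$ and $\phi_1$ are $W$-equivariant maps $\Phi_1(\mathscr{C})\to\Phi$ agreeing on simple roots (both send $\alpha_s\mapsto\gamma_s$), hence coincide; also $\phi(-\delta)=-\phi(\delta)$ and $\phi_1(-\delta)=-\phi_1(\delta)$, and canonical coefficients are additive. Feeding $\phi_2\circ\phi=\phi_1$ into Lemma~\ref{lem:coeff0}(i) and Remark~\ref{rem:also0} then yields, for all $\delta\in\Phi_1(\mathscr{C})$ and $t\in S$, the equivalences $\coeff_t(\delta)>0\Leftrightarrow\coeff_t(\phi_1(\delta))>0\Leftrightarrow\coeff_t(\phi(\delta))>0$, and likewise with $<$ and with $=$ in place of $>$. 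Since Proposition~\ref{pp:1.44} gives $\coeff_t(\delta)\coeff_t(\phi(\delta))\geq(\coeff_t(\phi_1(\delta)))^2$ and Lemma~\ref{lem:ge1} gives $\coeff_t(\phi_1(\delta))>0\Rightarrow\coeff_t(\phi_1(\delta))\geq1$, the first assertion follows at once, and the ``$=$'' equivalence together with $\phi(\alpha_s)=\beta_s$ gives $\phi(\supp(\alpha))=\supp(\phi(\alpha))$.

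The substance is a refinement that I would prove by induction on $\ell(w)$, exactly in the style of the proof of Proposition~\ref{pp:1.44}: \emph{if $\alpha=w\alpha_s\in\Phi_1^+(\mathscr{C})$ and $t\in S$ with $\coeff_t(\phi_1(\alpha))<2$, then $\coeff_t(\alpha)\coeff_t(\phi(\alpha))=(\coeff_t(\phi_1(\alpha)))^2$}. The case $\ell(w)=0$ is immediate. For the inductive step I would pick $u\in S$ with $\ell(wr_u)<\ell(w)$ and write $w=w_1w_2$ with $w_1$ of minimal length in $w\langle r_s,r_u\rangle$ and $w_2$ the alternating tail ending in $r_u$; as in Proposition~\ref{pp:1.44}, $w_1\alpha_s,w_1\alpha_u\in\Phi_1^+(\mathscr{C})$ (so $w_1\gamma_s,w_1\gamma_u\in\Phi^+$ and $w_2\alpha_s,w_2\gamma_s$ are positive), and $w_2\alpha_s=p\alpha_s+\lambda q\alpha_u$, $w_2\gamma_s=p\gamma_s+q\gamma_u$, $w_2\beta_s=p\beta_s+\tfrac q\lambda\beta_u$ with $p,q\geq0$, $\lambda>0$. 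Writing $X,X',X''$ for the $t$-coefficients of $\alpha,\phi(\alpha),\phi_1(\alpha)$ and $Y,Y',Y'';Z,Z',Z''$ for those of $w_1\alpha_s,w_1\beta_s,w_1\gamma_s;w_1\alpha_u,w_1\beta_u,w_1\gamma_u$, the computation of Proposition~\ref{pp:1.44} gives
\begin{equation*}
XX'-(X'')^2=p^2\bigl(YY'-(Y'')^2\bigr)+q^2\bigl(ZZ'-(Z'')^2\bigr)+pq\bigl(\tfrac1\lambda YZ'+\lambda ZY'-2Y''Z''\bigr),
\end{equation*}
a sum of nonnegative terms. By Lemma~\ref{lem:ge1}, each of $p,q$ (coefficients of $w_2\gamma_s\in\Phi^+$) and each of $Y'',Z''$ (coefficients of $w_1\gamma_s,w_1\gamma_u\in\Phi^+$) is $0$ or $\geq1$. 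Since $X''=pY''+qZ''<2$: if $p>0$ then $p\geq1$ and $Y''\leq X''/p<2$, so the inductive hypothesis applied to $w_1\alpha_s$ forces $YY'=(Y'')^2$ (and if $p=0$ the first bracket drops out anyway); similarly the second bracket is annihilated; and the cross term vanishes since $C:=\tfrac1\lambda YZ'+\lambda ZY'-2Y''Z''$ equals $0$ whenever $Y''=0$ or $Z''=0$ (then $Y=Y'=0$ or $Z=Z'=0$ by the ``$=$'' equivalence), while $Y'',Z''\geq1$ together with $p,q\geq1$ would force $X''\geq2$. Hence $XX'=(X'')^2$, closing the induction.

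Granting the refinement, the proposition follows quickly. Given $1\leq\coeff_t(\alpha)\coeff_t(\phi(\alpha))<4$, Proposition~\ref{pp:1.44} forces $(\coeff_t(\phi_1(\alpha)))^2<4$; after replacing $\alpha$ by $-\alpha$ if necessary (which changes neither $\coeff_t(\alpha)\coeff_t(\phi(\alpha))$ nor $(\coeff_t(\phi_1(\alpha)))^2$) we may assume $\alpha\in\Phi_1^+(\mathscr{C})$, so $\coeff_t(\phi_1(\alpha))\geq0$ by Lemma~\ref{lem:sign} and therefore $0\leq\coeff_t(\phi_1(\alpha))<2$; the refinement then gives $\coeff_t(\alpha)\coeff_t(\phi(\alpha))=(\coeff_t(\phi_1(\alpha)))^2$, so $(\coeff_t(\phi_1(\alpha)))^2\geq1$ and thus $0<\coeff_t(\phi_1(\alpha))<2$, and Lemma~\ref{lem:ge1} pins $\coeff_t(\phi_1(\alpha))$ to $1$ or $2\cos(\pi/m_{r_1r_2})$ with $4\leq m_{r_1r_2}<\infty$, i.e.\ the product is $1$ or $4\cos^2(\pi/m)$. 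For the final clause, when $\alpha\in\Phi_1^+(\mathscr{C})$ one has $\coeff_t(\phi_1(\alpha))\geq0$: if $\coeff_t(\phi_1(\alpha))=1$ the refinement gives the product equal to $1$, and conversely if the product equals $1$ then $(\coeff_t(\phi_1(\alpha)))^2<4$, the refinement applies, and $(\coeff_t(\phi_1(\alpha)))^2=1$ forces $\coeff_t(\phi_1(\alpha))=1$. The main obstacle is the inductive step, specifically showing that the cross term $C$ must vanish under the hypothesis $\coeff_t(\phi_1(\alpha))<2$; this is precisely where Brink's integrality statement (Lemma~\ref{lem:ge1})---not merely the inequality of Proposition~\ref{pp:1.44}---is indispensable.
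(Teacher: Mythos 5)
Your proof is correct and follows precisely the route the paper itself indicates (the paper omits the argument, saying only that the proposition follows by combining Proposition~\ref{pp:1.44} through Lemma~\ref{lem:ge1} with an induction in the style of Proposition~\ref{pp:1.44}); your refinement that $\coeff_t(\alpha)\coeff_t(\phi(\alpha))=(\coeff_t(\phi_1(\alpha)))^2$ whenever $\coeff_t(\phi_1(\alpha))<2$, proved by re-running the decomposition $w=w_1w_2$ and using Brink's integrality to kill all three summands, is exactly the missing ingredient. No gaps.
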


\begin{proposition}
\label{pp:fmq}
Suppose that $\alpha_1, \alpha_2 \in \Phi_1(\mathscr{C})$. Then 
\begin{equation*}
\langle \alpha_1, \phi(\alpha_2) \rangle \langle \alpha_2, \phi(\alpha_1)\rangle
\geq (\phi_1(\alpha_1),\phi_1(\alpha_2))^2.
\end{equation*}
\end{proposition}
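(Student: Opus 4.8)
The plan is to first turn the two–root inequality into a one–root inequality and then argue along the lines of the proof of Proposition~\ref{pp:1.44}. Writing $\alpha_2=w\alpha_s$ and using the $W$–invariance of $\langle\ ,\ \rangle$ (Lemma~\ref{lem:eqf}) and of $(\ ,\ )$, together with the $W$–equivariance of $\phi$ and $\phi_1$, one gets $\langle\alpha_1,\phi(\alpha_2)\rangle=\langle w^{-1}\alpha_1,\beta_s\rangle$, $\langle\alpha_2,\phi(\alpha_1)\rangle=\langle\alpha_s,\phi(w^{-1}\alpha_1)\rangle$ and $(\phi_1(\alpha_1),\phi_1(\alpha_2))=(\phi_1(w^{-1}\alpha_1),\gamma_s)$; so the assertion is equivalent to
$$\langle\xi,\beta_s\rangle\,\langle\alpha_s,\phi(\xi)\rangle\ \geq\ (\phi_1(\xi),\gamma_s)^2\qquad(\ast)$$
for all $\xi\in\Phi_1(\mathscr{C})$ and all $s\in S$. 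Since $\phi(-\xi)=-\phi(\xi)$ and $\phi_1(-\xi)=-\phi_1(\xi)$ (both immediate from $W$–equivariance, as $-\gamma=wr_{s_0}\gamma_{s_0}$ when $\gamma=w\gamma_{s_0}$), replacing $\xi$ by $-\xi$ changes neither side, so I would assume $\xi\in\Phi_1^+(\mathscr{C})$ and induct on $\ell(w)$, where $\xi=w\alpha_{s_0}$.

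When $\ell(w)=0$ one has $\xi=\alpha_{s_0}$, and $(\ast)$ reads $\langle\alpha_{s_0},\beta_s\rangle\langle\alpha_s,\beta_{s_0}\rangle\geq(\gamma_{s_0},\gamma_s)^2$, which is in fact an \emph{equality} by (C3) and (C3'). For the inductive step I would reuse the decomposition from Proposition~\ref{pp:1.44}: pick $t$ with $\ell(wr_t)=\ell(w)-1$, write $w=w_1w_2$ with $w_1$ of minimal length in $w\langle\{r_{s_0},r_t\}\rangle$ and $w_2$ an alternating product of $r_{s_0},r_t$ ending in $r_t$, so that $\ell(w_1)<\ell(w)$ and, by Lemma~\ref{lem:minleng} and Lemma~\ref{lem:key}(ii), $w_1\alpha_{s_0},w_1\alpha_t\in\Phi_1^+(\mathscr{C})$. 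The formulas of Lemma~\ref{lem:storder}, applied in $V_1$, $V_2$ and (for the standard realization) $V$, give $w_2\alpha_{s_0}=p\,\alpha_{s_0}+\lambda q\,\alpha_t$, $w_2\beta_{s_0}=p\,\beta_{s_0}+\tfrac{q}{\lambda}\,\beta_t$ and $w_2\gamma_{s_0}=p\,\gamma_{s_0}+q\,\gamma_t$ with $p,q\geq0$ and one and the same $\lambda>0$, the coincidence of $\lambda$ being forced by $\lambda\langle\alpha_t,\beta_{s_0}\rangle=\tfrac1\lambda\langle\alpha_{s_0},\beta_t\rangle=(\gamma_{s_0},\gamma_t)$, i.e. by (C3'). (If $m_{s_0t}=2$ then $q=0$, $p=1$ and $\xi=w_1\alpha_{s_0}$, so $(\ast)$ is immediate from the inductive hypothesis.) Abbreviating $a=\langle w_1\alpha_{s_0},\beta_s\rangle$, $a'=\langle\alpha_s,\phi(w_1\alpha_{s_0})\rangle$, $a''=(\phi_1(w_1\alpha_{s_0}),\gamma_s)$ and, similarly, $b,b',b''$ with $w_1\alpha_t$ replacing $w_1\alpha_{s_0}$, a direct computation gives
$$\langle\xi,\beta_s\rangle\langle\alpha_s,\phi(\xi)\rangle-(\phi_1(\xi),\gamma_s)^2
=p^2(aa'-a''^2)+q^2(bb'-b''^2)+pq\Bigl(\tfrac1\lambda ab'+\lambda a'b-2a''b''\Bigr).$$

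The inductive hypothesis, applied to the shorter roots $w_1\alpha_{s_0}$ and $w_1\alpha_t$, yields $aa'\geq a''^2$ and $bb'\geq b''^2$, so the first two summands are $\geq0$; moreover, by Corollaries~\ref{co:sign} and \ref{co:sign1} each of the triples $(a,a',a'')$ and $(b,b',b'')$ consists of reals of one common sign, whence $aa'\geq0$ and $bb'\geq0$. If $a$ and $b$ have the same sign (equivalently $a''b''\geq0$), then $ab'\geq0$ and $a'b\geq0$, and the arithmetic–geometric mean inequality gives $\tfrac1\lambda ab'+\lambda a'b\geq 2\sqrt{ab'\cdot a'b}=2\sqrt{aa'\,bb'}\geq 2\sqrt{a''^2b''^2}=2a''b''$, so the cross term is nonnegative and $(\ast)$ follows for $\xi$.

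I expect the only genuine obstacle to be the remaining case, in which $a$ and $b$ have strictly opposite signs: there the same AM–GM estimate shows the cross term is $\leq0$, so one is forced to bound it against the first two summands, i.e. to establish the discriminant inequality $\bigl(\tfrac1\lambda ab'+\lambda a'b-2a''b''\bigr)^2\leq 4(aa'-a''^2)(bb'-b''^2)$, together with the obvious degenerate versions when $aa'=a''^2$ or $bb'=b''^2$. This cannot follow from $aa'\geq a''^2$ and $bb'\geq b''^2$ alone, and will require the explicit values of $p,q$ and $\lambda$ supplied by Lemma~\ref{lem:storder} (equivalently, the constraints on products of canonical coefficients coming from Brink's Lemma~\ref{lem:ge1} and Proposition~\ref{pp:geq1}), exploiting that $w_2$ ranges over the root system of the dihedral group $\langle r_{s_0},r_t\rangle$ and that $\tfrac1\lambda\langle\alpha_{s_0},\beta_t\rangle=\lambda\langle\alpha_t,\beta_{s_0}\rangle=(\gamma_{s_0},\gamma_t)$ exactly matches the corresponding quantity in the orthogonal realization, where $(\ast)$ degenerates to an identity.
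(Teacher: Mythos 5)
Your reduction to the one--root inequality $(\ast)$, the base case, and the algebraic expansion into $p^2(aa'-a''^2)+q^2(bb'-b''^2)+pq\bigl(\tfrac1\lambda ab'+\lambda a'b-2a''b''\bigr)$ are all sound, and formally this is the same algebra the paper performs. But the case you flag as ``the only genuine obstacle'' is a genuine gap, and it cannot be closed from the information your induction provides. Everything your inductive hypothesis and the sign-coherence results (Corollaries~\ref{co:sign} and~\ref{co:sign1}) give you about the six quantities is: $aa'\ge a''^2$, $bb'\ge b''^2$, and that each triple $(a,a',a'')$, $(b,b',b'')$ is coherently signed. Taking $a=a'=1$, $a''=\epsilon$, $b=b'=-1$, $b''=-\delta$ with $0<\epsilon\ne\delta$ small, $\lambda=1$ and $p=q=1$ satisfies all of these constraints, yet makes your expression equal to $-(\epsilon-\delta)^2<0$. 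So in the mixed-sign case the desired nonnegativity is not a consequence of the inductive hypothesis at all; it would have to come from additional structural input about which $(p,q,\lambda,a,\dots,b'')$ actually occur, and your proposal does not supply that. The underlying reason is that your decomposition $\xi=p\,w_1\alpha_{s_0}+\lambda q\,w_1\alpha_t$, obtained from the minimal coset representative $w_1$, gives no control whatsoever over the signs of $\langle w_1\alpha_{s_0},\beta_s\rangle$ and $\langle w_1\alpha_t,\beta_s\rangle$ relative to each other.

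The paper's proof is organized precisely to manufacture that sign control, and it does so by a dual decomposition: it inducts on $\dep_{\mathscr{C},1}(\alpha_1)$ rather than on $\ell(w)$, keeps $\alpha_1$ whole, and instead decomposes the \emph{transformed simple root}. Concretely, it first disposes of the case $\langle\alpha_1,\beta_s\rangle>0$ by applying $r_s$ (which lowers depth by Lemma~\ref{lem:1.33}) and invoking the inductive hypothesis directly. In the remaining case $\langle\alpha_1,\beta_s\rangle\le0$ it picks $t$ with $r_t\alpha_1\prec_1\alpha_1$ and takes $w$ to be the \emph{longest} alternating word in $r_s,r_t$ ending in $r_t$ with $\dep_{\mathscr{C},1}(w\alpha_1)=\dep_{\mathscr{C},1}(\alpha_1)-\ell(w)$; it then expands $w\alpha_s=p\alpha_s+\lambda q\alpha_t$, $w\beta_s=p\beta_s+\tfrac{q}{\lambda}\beta_t$, $w\gamma_s=p\gamma_s+q\gamma_t$ against the single, depth-reduced root $w\alpha_1$. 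The maximality of $w$, combined with the fact that each letter of $w$ strictly lowers the depth of $\alpha_1$, forces \emph{both} $\langle w\alpha_1,\beta_s\rangle\le0$ and $\langle w\alpha_1,\beta_t\rangle\le0$; Lemma~\ref{lem:1.35} then makes all four factors entering the cross term coherently signed, so the AM--GM step goes through unconditionally. That is the idea missing from your argument; to repair it you would have to replace the minimal-coset-representative splitting of $\alpha_1$ by a depth-governed construction of this kind.
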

\begin{proof}
Since both $\langle\, ,\, \rangle $ and $(\,,\,)$ are $W$-invariant, and $\phi$,
$\phi_1$ are $W$-equivariant, we may replace $\alpha_1$ and $\alpha_2$ by
$u\alpha_1$ and $u \alpha_2$ for a suitable $u\in W$ so that $\alpha_2
=\alpha_s$ for some $s\in S$. Furthermore, replace $\alpha_1$ by $-\alpha_1$ if
needs be, we may assume that $\alpha_1 \in \Phi^+_1(\mathscr{C})$. We proceed with an induction
on the depth of $\alpha_1$.

If $\dep_{\mathscr{C},1}(\alpha_1) =1$, then $\alpha_1 =\lambda \alpha_r$, 
for some positive constant $\lambda$ and some $r\in S$. 
It follows from Lemma~\ref{lem:coeff0}~(ii) that
$\phi_1(\alpha_1) = \gamma_r$. Furthermore, Lemma \ref{lem:inv} yields that
$\phi(\alpha_1) = \tfrac{1}{\lambda} \beta_r$, and hence 
\begin{multline*}
\langle \alpha_1, \phi(\alpha_2) \rangle \langle \alpha_2, \phi(\alpha_1)
\rangle = \lambda \langle \alpha_r, \beta_s \rangle \tfrac{1}{\lambda} \langle
\alpha_s, \beta_r \rangle\\
= \langle \alpha_r, \beta_s \rangle \langle \alpha_s,
\beta_r \rangle 
=(\gamma_r, \gamma_s)^2 
=(\phi_1(\alpha_1), \phi_1(\alpha_2)).
\end{multline*}
Thus we may assume that $\dep_{\mathscr{C},1}(\alpha_1) > 1$. First, if $\langle \alpha_1,
\beta_s \rangle >0$ then Lemma \ref{lem:1.33} yields that $r_s \alpha_1 \prec_1
\alpha_1$, and hence 
\begin{align*}
\langle \alpha_1, \beta_s \rangle \langle \alpha_s,
\phi(\alpha_1)\rangle
&= \langle r_s \alpha_1, r_s \beta_s \rangle \langle r_s \alpha_s, r_s\phi(
\alpha_1) \rangle\\
&=(- \langle r_s\alpha_1, \beta_s \rangle)(- \langle \alpha_s,
\phi(r_s \alpha_1) \rangle) \\
 &\geq (\phi_1(r_s \alpha_1), \gamma_s)^2 \ \ \ \text{  (by the inductive
hypothesis)}\\
&= (\phi_1(\alpha_1), -\gamma_s)^2\\
&= (\phi_1(\alpha_1), \gamma_s)^2,
\end{align*}
as required. Thus we may further assume that $\langle \alpha_1, \beta_s \rangle
\leq 0$.

Next let $t\in S$ be such that $r_t \alpha_1 \prec_1 \alpha_1$. Then Lemma
\ref{lem:1.33} yields that  $\langle \alpha_1, \beta_t \rangle >0$, and, in
particular, $s\neq t$. Now let $w\in W$ be a maximal length alternating product of
$r_s$ and $r_t$ ending in $r_t$ such that $\dep_{\mathscr{C},1}(w\alpha_1) =
\dep_{\mathscr{C},1}(\alpha_1)-\ell(w)$. In particular, $w\neq 1$. 
Thus $\dep_{\mathscr{C},1}(w\alpha_1) \lneq \dep_{\mathscr{C},1}(\alpha_1)$, 
and so the inductive hypothesis yields that
\begin{align}
\label{eq:fmq2}
&\langle w \alpha_1, \beta_s \rangle \langle \alpha_s, \phi(w\alpha_1) \rangle
\geq (\gamma_s, \phi_1(w\alpha_1))^2 \\
\noalign{\hbox{and}}
\label{eq:fmq3}
&\langle w \alpha_1, \beta_t \rangle \langle \alpha_t, \phi(w\alpha_1) \rangle
\geq (\gamma_t, \phi_1(w\alpha_1))^2.
\end{align}
Next, for any reduced expression  $w=r_{s_1}\cdots r_{s_l}$ ($s_i\in S$),  
it is readily checked that $r_{s_l} \alpha_1 \prec_1
\alpha_1$, and so it follows from Lemma~\ref{lem:1.33} that
 $\langle \alpha_1, \beta_{s_l} \rangle >0$. Therefore $s_l\neq
s$, and $w$ has no reduced expression ending in $r_s$. That is, $w$ is a
product of $r_s$ and $r_t$ with strictly fewer than $m_{st}$ factors. Thus by
Lemma~\ref{lem:coeffpos} or a direct calculation as that in Lemma~\ref{lem:storder}, there are
non-negative constants $p, q$ and positive constant $\lambda$ such that 
$$w\gamma_s = p \gamma_s + q \gamma_t \  \text{ and } \  w\alpha_s = p\alpha_s
+\lambda q \alpha_t \  \text{ and } \  w\beta_s = p\beta_s + \tfrac{q}{\lambda}
\beta_t.$$
Thus 
\begin{align*}
&\qquad\langle \alpha_1, \beta_s \rangle \langle \alpha_s, \phi(\alpha_1)
\rangle -(\gamma_s, \phi_1(\alpha_1))^2\\
&=\langle w \alpha_1, w\beta_s \rangle \langle w\alpha_s, \phi(w\alpha_1)\rangle
-(w\gamma_s, \phi_1(w\alpha_1))^2\\
&\qquad\qquad\qquad\qquad\text{( since $\langle\,,\,\rangle$ and $(\,,\,)$ are
$W$-invariant)}\\
& =\langle w \alpha_1, p\beta_s +\tfrac{q}{\lambda}\beta_t\rangle \langle
p\alpha_s + \lambda q \alpha_t, \phi(w\alpha_1) \rangle -(\phi_1(w\alpha_1),
p\gamma_s+q\gamma_t)^2\\
& =\underbrace{p^2(\langle w\alpha_1, \beta_s \rangle \langle \alpha_s,
\phi(w\alpha_1)\rangle -(\phi_1(w\alpha_1), \gamma_s)^2)}_\text{A}\\
&\qquad +\underbrace{q^2(\langle w\alpha_1, \beta_t \rangle \langle \alpha_t,
\phi(w\alpha_1)\rangle -(\phi_1(w\alpha_1), \gamma_t)^2)}_\text{B}\ +\ \ C.
\end{align*}
where 
\begin{align*}
C& = pq(\ \tfrac{1}{\lambda}\langle w\alpha_1, \beta_t \rangle \langle \alpha_s,
\phi(w\alpha_1)\rangle +\lambda \langle w\alpha_1, \beta_s\rangle \langle
\alpha_t, \phi(w\alpha_1)\rangle\\
 &\qquad -2(\phi_1(w\alpha_1), \gamma_s)(\phi_1(w\alpha_1), \gamma_t) \ ).
\end{align*}
It follows from (\ref{eq:fmq2}) and (\ref{eq:fmq3}) that A and B are both
nonnegative.  It follows from the geometric mean and arithmetic mean inequality
that 
\begin{align*}
\tfrac{1}{\lambda}\langle w\alpha_1, \beta_t \rangle& \langle \alpha_s,
\phi(w\alpha_1)\rangle + \lambda \langle w\alpha_1, \beta_s \rangle \langle
\alpha_t, \phi(w\alpha_1)\rangle \\
& \geq 2 \sqrt{\langle w\alpha_1, \beta_s \rangle \langle \alpha_s,
\phi(w\alpha_1)\rangle \langle w\alpha_1, \beta_t \rangle \langle \alpha_t,
\phi(w\alpha_1)\rangle}\\
& \geq 2(\phi_1(w\alpha_1), \gamma_s)(\phi_1(w\alpha_1), \gamma_t) \ \ \ \ \text{ (by
(\ref{eq:fmq2}) and (\ref{eq:fmq3}))},
\end{align*}
that is, $C \geq 0$ as well. Therefore $\langle \alpha_1, \beta_s \rangle
\langle \alpha_s, \phi(\alpha_1)\rangle \geq (\gamma_s, \phi_1(\alpha_1))^2$, and
the desired result follows by induction.
\end{proof}

\section{Tits cones and a non-positivity result}
\label{sec:nonpos}

Let $\mathscr{C}=(\, S, V_1, V_2, \Pi_1, \Pi_2, \langle\,,\,\rangle \,)$ 
be a fixed Coxeter datum, and let $(W, R)$ be the corresponding 
Coxeter system. In this section we study a class of cones associated
to $\mathscr{C}$ that are analogous to the \emph{Tits cones} in the 
classical setting (as defined in \cite{VB71} or \cite[\S 5.13]{HM}). Furthermore, 
we investigate certain $W$-invariant sets in $V_1$ and $V_2$ that are
closely related to these cones.
The key result of this section is 
a generalization to  \cite[Proposition 1.2]{MX82} and 
\cite[Proposition 3.4]{HT97}. For this section we impose one additional 
condition on $\mathscr{C}$, namely:
\begin{itemize}
 \item [(C6)] $V_1 =\spa(\Pi_1)$ and $V_2=\spa(\Pi_2)$,
\end{itemize}
and we retain all other conventions and notation of earlier sections. 


\begin{notation}
For each $i\in \{1, 2\}$ and $I\subseteq S$, recall the notation of
$\Pi_{iI}$ and $V_{iI}$ introduced in Remark \ref{rm:para}, and set 
$P_{iI}=\PLC(\Pi_{iI})\cup\{0\}$. When $I=S$ we write $P_i$ in place 
of $P_{iS}$. 
\end{notation}


For each $i\in \{1, 2\}$ and subset $I$ of $S$ recall the notation $W_I$ 
(introduced in Remark ~\ref{rm:para}) 
of the standard parabolic subgroup of $W$ corresponding to $I$, it is clear that $W_I$
acts (faithfully) on $V_{iI}$. This allows us to specify a $W_I$-action on $\hm(V_{iI}, \R)$
as follows: if $w\in W_I$ and $g\in \hm(V_{iI}, \R)$ then $wg \in \hm(V_{iI}, \R)$ is given 
by $(wg)v=g(w^{-1}v)$ for all $v\in V_{iI}$. Naturally, when $I=S$ the Coxeter group $W$ acts on 
$\hm(V_i, \R)$ in a similar way.

\begin{notation}
\label{def:tits}
For each $i\in \{1, 2\}$ and subset $I$ of $S$ we set
\begin{align*}
 P^\#_{iI}&=\{\,f\in \hm(V_{iI}, \R)\mid f(x)\geq 0 \text{ for all $x\in P_{iI}$}\,\},\\
\noalign{\hbox{and}}
 P_{iI}^{\#\#}&=\{\, x\in V_{iI}\mid f(x)\geq 0\text{ for all $f\in P_{iI}^{\#}$}\}.
\end{align*}
Moreover, we set $U_{iI}=\bigcup_{w\in W_I} w P_{iI}^\#$, and write
$$
U_{iI}^\#=\{x\in V_{iI}\mid f(x)\geq 0 \text{ for all $f\in U_{iI}$}\}.
$$
When $I=S$ we write $P_i^\#$, $P_i^{\#\#}$, $U_i$ and $U_i^\#$ in places of $P_{iS}^\#$, 
$P_{iS}^{\#\#}$, $U_{iS}$ and $U_{iS}^\#$ respectively.
\end{notation}

We call a (convex) subset of a real vector space a \emph{cone} 
if it is  closed under addition and multiplication by positive scalars. 
It is clear that $P_{iI}$, $P_{iI}^\#$ and $P_{iI}^{\#\#}$ ($i=1,2$) are cones for each $I\subseteq S$.   

\begin{lemma}
For each $i\in \{1, 2\}$ and each $f\in \hm(V_i, \R)$, set 
$$
\Neg(f)=\{\widehat{x}\in \widehat{\Phi_i}\mid \text{ $x\in \Phi_i^+$ and $f(x)<0$}\}.
$$
Then $U_i=\{f\in \hm(V_i, \R)\mid |\Neg(f)|<\infty  \}$.
\end{lemma}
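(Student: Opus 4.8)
The plan is to prove the two inclusions separately, working with the description of $U_i$ as $\bigcup_{w\in W}wP_i^\#$ and the characterization of $N_i(w)$ given in Lemma~\ref{lem:simp} and equations \eqref{eq:N1}, \eqref{eq:N2}. First I would show that $f\in U_i$ implies $|\Neg(f)|<\infty$. Write $f=wg$ with $g\in P_i^\#$, so that $g(x)\geq 0$ for all $x\in P_i=\PLC(\Pi_i)\cup\{0\}$, i.e.\ $g$ is nonnegative on every positive root. For $x\in\Phi_i^+$ we have $f(x)=(wg)(x)=g(w^{-1}x)$, which is $<0$ only if $w^{-1}x\in\Phi_i^-$, equivalently (by the alternative characterization of $N_i$ noted just before Lemma~\ref{lem:simp}) only if $\widehat{x}\in N_i(w^{-1})$. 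Hence $\Neg(f)\subseteq N_i(w^{-1})$, which has cardinality $\ell(w^{-1})=\ell(w)<\infty$ by Lemma~\ref{lem:simp}~(ii). This direction is routine.

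The substantive direction is: if $|\Neg(f)|<\infty$ then $f\in U_i$. The natural strategy is induction on $n=|\Neg(f)|$. If $n=0$ then $f(x)\geq 0$ for every $x\in\Phi_i^+$, hence $f\geq 0$ on $\PLC(\Pi_i)$ and on $0$, so $f\in P_i^\#\subseteq U_i$. For the inductive step, suppose $\Neg(f)\neq\emptyset$; I would first argue that some \emph{simple} root lies in $\Neg(f)$, i.e.\ $f(\alpha_s)<0$ for some $s\in S$ (using the notation $\alpha_s$, $\beta_s$ according to $i=1,2$). Indeed if $f(\alpha_s)\geq 0$ for all $s\in S$ then $f\geq 0$ on all of $\PLC(\Pi_i)$, forcing $\Neg(f)=\emptyset$. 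Pick such an $s$ and consider $f'=r_s f$ (the action being $(r_s f)(x)=f(r_s x)$, and $r_s$ here means $\rho_{V_i}(s)$, or rather the image of $r_s$ under $f_i$). I claim $|\Neg(f')|=|\Neg(f)|-1$: the map $\widehat{x}\mapsto\widehat{r_s x}$ is a bijection of $\widehat{\Phi_i}$ that sends $\Phi_i^+\setminus\{\widehat{\alpha_s}\}$ to itself (this is the mild generalization of \cite[\S5.6]{HM} underlying Lemma~\ref{lem:simp}~(i)), and $f'(x)=f(r_s x)<0$ iff $f(r_s x)<0$; tracking $\widehat{\alpha_s}$ separately, $\widehat{\alpha_s}\in\Neg(f)$ but $f'(\alpha_s)=f(r_s\alpha_s)=f(-\alpha_s)=-f(\alpha_s)>0$, so $\widehat{\alpha_s}\notin\Neg(f')$; and for $\widehat{x}\neq\widehat{\alpha_s}$ with $x\in\Phi_i^+$, $\widehat{x}\in\Neg(f')$ iff $\widehat{r_s x}\in\Neg(f)$. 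Hence $\Neg(f')$ is in bijection with $\Neg(f)\setminus\{\widehat{\alpha_s}\}$, giving the count. By induction $f'\in U_i$, say $f'=wg$ with $g\in P_i^\#$; then $f=r_s f'=(r_s w)g\in U_i$.

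The step I expect to require the most care is verifying that the correspondence $\widehat{x}\mapsto\widehat{r_s x}$ really does restrict to a bijection of $\widehat{\Phi_i^+}\setminus\{\widehat{\alpha_s}\}$ and is compatible with the equivalence classes $\widehat{\phantom{x}}$ — that is, that $r_s$ sends nonzero scalar multiples to nonzero scalar multiples (immediate, since $r_s$ is linear and invertible) and sends a positive non-simple root to a positive root. This is exactly the content behind Lemma~\ref{lem:simp}~(i) together with Lemma~\ref{lem:key}~(i), and I would cite those rather than reprove the combinatorics; the only genuinely new bookkeeping is checking that the sign change $f(\alpha_s)<0\mapsto f'(\alpha_s)>0$ is strict, which is clear since $f(\alpha_s)\neq 0$. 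Once that bijection is in hand the cardinality computation $|\Neg(f')|=|\Neg(f)|-1$ and the induction close the argument.
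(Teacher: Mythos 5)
Your proposal is correct and follows essentially the same route as the paper: the forward inclusion via $\Neg(f)\subseteq N_i(w^{-1})$ and Lemma~\ref{lem:simp}~(ii), and the reverse inclusion by induction on $|\Neg(f)|$, picking a simple root with $f(\alpha_s)<0$ and showing $|\Neg(r_sf)|=|\Neg(f)|-1$. Your bijection argument simply makes explicit the step the paper dismisses as ``readily checked.''
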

\begin{proof}
 It is enough to prove the $U_1$ case.
Let $f\in U_1$ be arbitrary. Then $f=wg$ for some $w\in W$ and $g\in P_1^\#$. 
Suppose that $x\in \Phi_1^+$ such that $f(x)<0$. Then $(wg) x=g(w^{-1}x)<0$. 
Since $g\in P_1^\#$, it follows that $w^{-1}x\in \Phi_1^-$, that is, 
$\widehat{x}\in N_1(w^{-1})$. Now since $N_1(w^{-1})$ is of finite size, it follows 
that $U_1\subseteq \{f\in \hm(V_1, \R)\mid |\Neg(f)|<\infty  \}$. Conversely, suppose 
that $f\in \hm(V_1, \R)$ with $|\Neg(f)|<\infty$. If $|\Neg(f)|=0$ then 
$f\in P_1^\#\subseteq U_1$. Thus we may assume that $|\Neg(f)|>0$, and proceed with 
an induction. Observe that if  $|\Neg(f)|>0$ then there exists some $s\in S$ such that 
$f(\alpha_s)<0$. It is then readily checked that $|\Neg(r_s f)|=|\Neg(f)|-1$, and hence
it follows from our inductive hypothesis that $r_s f\in U_1$. Since $U_1$ is clearly 
$W$-invariant, it follows that $f\in U_1$, and therefore 
$\{f\in \hm(V_1, \R)\mid |\Neg(f)|<\infty  \} \subseteq U_1$, as required.
\end{proof}

The above lemma yields that $U_1$ and $U_2$ are cones. In fact, 
for each $I\subseteq S$ we can show that $U_{1I}$ and $U_{2I}$ are cones. 
These cones generalize the notion of the Tits cones as defined, for example, in \cite[\S 5.13]{HM}.  

\begin{definition}
 We call $U_1$ and $U_2$ the \emph{Tits cones} of the Coxeter datum $\mathscr{C}$; and
for each $I\subseteq S$ we call $U_{1I}$ and $U_{2I}$ the Tits cones of the the Coxeter
datum $\mathscr{C}_I=(\, I, V_{1I}, V_{2I}, \Pi_{1I}, \Pi_{2I}, \langle, \rangle_I\,)$
(where $\langle, \rangle_I$ is the restriction of $\langle, \rangle$ to $V_{1I}\times V_{2I}$).
\end{definition}

For each $i\in \{1, 2\}$ observe that:
\begin{equation}
\label{eq:cone}
\begin{split}
U_i^\# &= \{\,v\in V_i \mid \text{$(wf) v \geq 0$ for all $f\in P_i^\#$ and $w\in
W$ }\,\}\\
&=\bigcap_{w\in W}\{\,v\in V_i \mid \text{$f(w^{-1}v)\geq 0$ for all
$f\in P_i^\#$}\,\}\\
&=\bigcap_{w\in W}\{\,wv\in V_i \mid \text{$f(v)\geq 0$ for all $f\in
P_i^\#$}\,\}\\
&=\bigcap_{w\in W} wP_i^{\#\#},
\end{split}
\end{equation}
and similarly, 
\begin{equation}
 \label{eq:UI_i}
U_{iI}^\#=\bigcap_{w\in W_{I}} w P_{iI}^{\#\#}, \text{ for each 
$I\subseteq S$}. 
\end{equation}

The following is a well-known result (a proof can be found in 
\cite[Notes (b) and (c), Page 4]{RB96}).

\begin{lemma}
\label{lem:ex&cl}
Let $I\subseteq S$ be finite. For each $i\in \{1, 2\}$,  
the condition  $0\notin \PLC(\Pi_{iI})$ is equivalent to the existence of
an $f_i\in \hm(V_{iI}, \R)$ such that $f_i(x) >0$ for all $x\in \Pi_{iI}$. 
Furthermore, $P_{iI}^{\#\#}=P_{iI}$.
\qed
\end{lemma}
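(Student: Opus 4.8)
The plan is to treat the two assertions separately, both by elementary finite-dimensional convexity, exploiting that $I$ is finite so that $V_{iI}=\spa(\Pi_{iI})$ is finite-dimensional and $\hm(V_{iI},\R)$ is its ordinary dual.

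For the stated equivalence, the ``if'' direction is immediate: if $f_i\in\hm(V_{iI},\R)$ satisfies $f_i(x)>0$ for every $x\in\Pi_{iI}$, then for any $\sum_{s\in I}\lambda_s\alpha_s\in\PLC(\Pi_{iI})$ (with $\lambda_s\ge 0$ and $\lambda_{s'}>0$ for some $s'$) one has $f_i\bigl(\sum_{s\in I}\lambda_s\alpha_s\bigr)=\sum_{s\in I}\lambda_sf_i(\alpha_s)>0$, so $0\notin\PLC(\Pi_{iI})$. For the ``only if'' direction, suppose $0\notin\PLC(\Pi_{iI})$. I would observe that the convex hull $C$ of the finite set $\Pi_{iI}$ is a compact convex subset of $V_{iI}$ not containing $0$: indeed, if $0=\sum_{s\in I}\mu_s\alpha_s$ were a convex combination, then $\sum_{s\in I}\mu_s=1$ forces $\mu_{s'}>0$ for some $s'$, placing $0$ in $\PLC(\Pi_{iI})$, a contradiction. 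Applying the separating hyperplane theorem in the finite-dimensional space $V_{iI}$ then produces $f_i\in\hm(V_{iI},\R)$ and a constant $c>0$ with $f_i(y)\ge c$ for all $y\in C$; in particular $f_i(\alpha_s)>0$ for every $s\in I$, as required.

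For the identity $P_{iI}^{\#\#}=P_{iI}$, the inclusion $P_{iI}\subseteq P_{iI}^{\#\#}$ is immediate from the definitions, since every $f\in P_{iI}^{\#}$ is by construction non-negative on $P_{iI}$. For the reverse inclusion I would argue contrapositively. Let $x\in V_{iI}\setminus P_{iI}$; since $0\in P_{iI}$ we have $x\ne 0$, so $x$ fails to be a non-negative linear combination of the finite generating set $\Pi_{iI}$. By Farkas' lemma (equivalently, since the finitely generated cone $P_{iI}=\PLC(\Pi_{iI})\cup\{0\}$ is closed, by the bipolar theorem for convex cones) there exists $f\in\hm(V_{iI},\R)$ with $f(\alpha_s)\ge 0$ for all $s\in I$ and $f(x)<0$. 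The first condition gives $f\ge 0$ on all of $P_{iI}$, that is, $f\in P_{iI}^{\#}$, and then $f(x)<0$ shows $x\notin P_{iI}^{\#\#}$. Hence $P_{iI}^{\#\#}\subseteq P_{iI}$, completing the proof.

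I do not anticipate a serious obstacle here; the one point that demands care is the passage between positive linear combinations in the sense of $\PLC$ (which insists on at least one strictly positive coefficient) and convex combinations or arbitrary non-negative combinations, together with the observation that the finiteness of $I$ is exactly what legitimizes the compactness and Farkas arguments \emph{inside} $V_{iI}$ rather than in the possibly infinite-dimensional ambient space $V_i$.
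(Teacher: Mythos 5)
Your proof is correct and complete. Note that the paper does not actually prove this lemma itself --- it states it with a reference to \cite[Notes (b) and (c), Page 4]{RB96} --- so there is no in-paper argument to compare against; your route (strict separation of $0$ from the compact convex hull of the finite set $\Pi_{iI}$ for the equivalence, and Farkas' lemma together with the closedness of the finitely generated cone $P_{iI}$ for $P_{iI}^{\#\#}=P_{iI}$) is exactly the standard argument such a citation points to, and you correctly isolate the two places where finiteness of $I$ is indispensable.
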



\begin{lemma}
\label{lem:P**}
For each subset $I$ of $S$ and for each $i\in \{1, 2\}$ we have
$$P_i^{\#\#}\cap V_{iI} \subseteq P_{iI}^{\#\#}.$$ 
\end{lemma}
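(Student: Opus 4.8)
The plan is to unwind the definitions and use the fact, already recorded in \eqref{eq:cone} and \eqref{eq:UI_i}, that $P_i^{\#\#}$ is cut out by the linear functionals in $P_i^\#$, and similarly $P_{iI}^{\#\#}$ by those in $P_{iI}^\#$. So let $x \in P_i^{\#\#} \cap V_{iI}$; I must show $f(x) \geq 0$ for every $f \in P_{iI}^\# \subseteq \hm(V_{iI},\R)$. The obstacle is that such an $f$ is only defined on the subspace $V_{iI}$, whereas the hypothesis $x \in P_i^{\#\#}$ gives information only against functionals defined on all of $V_i$ that are nonnegative on the \emph{larger} cone $P_i = \PLC(\Pi_i)\cup\{0\}$. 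So the heart of the argument is an extension step: given $f \in P_{iI}^\#$, produce some $g \in P_i^\#$ with $g|_{V_{iI}} = f$ (or at least with $g(x) = f(x)$).

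First I would take $f \in P_{iI}^\#$ arbitrary, so $f \colon V_{iI} \to \R$ is linear with $f(\alpha_s) \geq 0$ (resp. $f(\beta_s)\geq 0$) for all $s \in I$. The idea is to extend $f$ to a linear functional $g$ on $V_i = \spa(\Pi_i)$ (here condition (C6) is used) by prescribing its values on the remaining simple roots: set $g(\alpha_s) = f(\alpha_s)$ for $s \in I$ and $g(\alpha_s) = c_s$ for $s \in S \setminus I$, where the constants $c_s \geq 0$ are to be chosen. Any such assignment extends to a well-defined linear functional on $\spa(\Pi_i)$ provided it is consistent with the linear relations among the $\alpha_s$; the cleanest way to guarantee consistency is to pick a basis for $V_i$ consisting of a subset of $\Pi_i$ that, after discarding, still spans — but one has to be careful because $I$-indexed simple roots need not be part of such a basis. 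A more robust route: choose the $c_s$ large enough. By Lemma~\ref{lem:ex&cl} applied with $I = S$ (when $S$ is finite; the general case needs an additional argument) there is an $h \in \hm(V_i,\R)$ with $h(\alpha_s) > 0$ for all $s \in S$; then for a suitable large real $t$ the functional $g = (\text{extension of } f \text{ by zero where possible}) + t\,h$ can be arranged to be nonnegative on all of $\Pi_i$, hence lies in $P_i^\# = P_i^{\#\#\#}$ by Lemma~\ref{lem:ex&cl}, while still satisfying $g|_{V_{iI}} = f$.

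The cleanest formulation avoids choosing bases: it suffices to show that for $f \in P_{iI}^\#$ and the \emph{fixed} vector $x \in V_{iI}$, there is $g \in P_i^\#$ with $g(x) = f(x)$, since then $f(x) = g(x) \geq 0$ because $x \in P_i^{\#\#}$. Write $x = \sum_{s \in I} \lambda_s \alpha_s$. The plan is: consider the linear functional $f$ on $V_{iI}$; by Hahn--Banach type reasoning (finite-dimensional, if $I$ is finite, or a Zorn's lemma extension in general) extend $f$ to \emph{some} linear $\tilde g$ on $V_i$; then correct $\tilde g$ by adding $t h$ where $h(\alpha_s) > 0$ for all $s$ and $t$ is chosen so large that $\tilde g(\alpha_s) + t h(\alpha_s) \geq 0$ for the finitely many $s \in S \setminus I$ with $\tilde g(\alpha_s) < 0$ (finiteness here may need $S$ finite, or an extra hypothesis — this is the point I expect to be the main obstacle, since for infinite $S$ no single $t$ need work and $h$ need not exist). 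For $s \in I$ we keep $g(\alpha_s) = \tilde g(\alpha_s) + t h(\alpha_s) = f(\alpha_s) + t h(\alpha_s)$, which is still $\geq 0$; so $g \in P_i^\#$, but now $g(x) = f(x) + t\,h(x)$, which is not $f(x)$. To fix this last discrepancy one chooses $h$ to also vanish on $V_{iI}$, i.e. $h \in P_{iI}^\perp$ with $h(\alpha_s)>0$ for $s\notin I$: such an $h$ exists iff $0 \notin \PLC(\Pi_{i,S\setminus I})$ modulo $V_{iI}$, which follows from (C5). Then $g(x) = f(x)$, completing the argument. I would write the proof in the finite-$I$ case first via Lemma~\ref{lem:ex&cl}, and expect the bookkeeping around the extension functional $h$ (existence, positivity on $\Pi_i \setminus \Pi_{iI}$, vanishing on $V_{iI}$) to be the delicate part.
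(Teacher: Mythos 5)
You have correctly isolated the crux of the matter: one wants to extend a given $f\in P_{iI}^{\#}$ to some $g\in P_i^{\#}$ with $g(x)=f(x)$, after which $f(x)=g(x)\ge 0$ follows from $x\in P_i^{\#\#}$. This is in fact exactly the step on which the paper's own (very short) proof rests: there one writes $V_i=V_{iI}\oplus V_{iI}'$ for a vector-space complement $V_{iI}'$, extends $f$ by zero on $V_{iI}'$, and asserts that the resulting functional lies in $P_i^{\#}$. So your instinct about where the difficulty sits is sound, and your plan is essentially a more cautious version of the paper's argument.

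However, your proposed completion has a genuine gap at precisely the point you flag as delicate. The auxiliary functional $h$ with $h|_{V_{iI}}=0$ and $h(\alpha_s)>0$ for all $s\in S\setminus I$ need not exist, and its existence does \emph{not} follow from (C5). What is actually required is that no nontrivial nonnegative combination of $\{\alpha_s\mid s\in S\setminus I\}$ lie in $V_{iI}$; this is strictly stronger than (C5), which only forbids nonnegative relations among \emph{all} of $\Pi_i$ summing to $0$. Concretely, conditions (C1)--(C6) do not exclude a relation of the form $\alpha_u=\alpha_r+\alpha_s-\alpha_t$ with $r,s,t\in I$ and $u\notin I$ (such a relation can be realized by a Coxeter datum with $m_{rs}=m_{tu}=\infty$, all other parameters equal to $2$, and $\Pi_2$ linearly independent). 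In that situation $\alpha_u\in V_{iI}$, so every $h$ vanishing on $V_{iI}$ has $h(\alpha_u)=0$ and cannot repair the sign; worse, every linear extension $g$ of $f$ satisfies $g(\alpha_u)=f(\alpha_r)+f(\alpha_s)-f(\alpha_t)$, which is negative for a suitable $f\in P_{iI}^{\#}$ (take $f$ dual to $\alpha_t$ on $V_{iI}$), so \emph{no} extension of $f$ lies in $P_i^{\#}$ at all. Indeed, in that configuration $\alpha_u$ itself lies in $P_i^{\#\#}\cap V_{iI}$ but not in $P_{iI}^{\#\#}=P_{iI}$ (by Lemma~\ref{lem:ex&cl}), so the extension strategy --- yours, and the paper's one-line version of it --- cannot be completed without either an additional hypothesis on $I$ (e.g.\ that $\Pi_i\cap V_{iI}=\Pi_{iI}$, or freeness of $\mathscr{C}$) or an argument that such relations cannot occur in the Coxeter data actually under consideration. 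Your secondary worries about infinite $S$ (no uniform $t$, and Lemma~\ref{lem:ex&cl} being stated only for finite index sets) are also legitimate, but they are subordinate to this more basic obstruction, which already appears when $S$ is finite.
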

\begin{proof}
For each $i\in \{1, 2\}$ write $V_i =V_{iI}\oplus
V_{iI}'$ where $V_{iI}'$ denotes 
a (vector space) complement of $V_{iI}$ in $V_i$. Clearly, every $g_i \in
P_{iI}^\#$ gives rise to a linear functional $g_i'$ in $P_i^\#$ as follows: for
any $v \in V_i$ there is a unique decomposition of the form $v=v_{iI} +v_{iI}'$ where
$v_{iI}\in V_{iI}$ and $v_{iI}'\in V_{iI}'$, and we simply set 
$g_i'(v)=g_i(v_{iI})$. Now let $v\in P_i^{\#\#}\cap V_{iI}$, and 
 $f\in P_{iI}^\#$ be arbitrary. Then $f(v)=f'(v)\geq 0$ (because $f'\in P_i^\#$ and $v\in
P_i^{\#\#}$), and hence it follows that
$v\in P_{iI}^{\#\#}$, as required.
\end{proof}

Let $x_i\in V_i$ ($i=1, 2$) be arbitrary. Given condition (C6) of $\mathscr{C}$, 
it follows that $x_i\in V_{iI}$ for some finite subset $I$ of $S$. From this
fact we may prove the following:
\begin{lemma}
\label{lem:P_i}
Suppose that $x_i \in U_i^\#$ (for each $i\in \{1,2\}$). Let $I$ be a  subset of $S$
such that $x_i\in V_{iI}$. Then $x_i\in U_{iI}^\#$. Furthermore, if $I$ is finite
then $x_i\in P_{iI}$. 
\end{lemma}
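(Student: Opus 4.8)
The plan is to reduce everything to the descriptions of the cones already at hand, namely $U_i^\#=\bigcap_{w\in W}wP_i^{\#\#}$ from \eqref{eq:cone} and $U_{iI}^\#=\bigcap_{w\in W_I}wP_{iI}^{\#\#}$ from \eqref{eq:UI_i}, together with Lemma~\ref{lem:P**} and Lemma~\ref{lem:ex&cl}. So fix $i\in\{1,2\}$, let $x_i\in U_i^\#$ and let $I\subseteq S$ with $x_i\in V_{iI}$. To show $x_i\in U_{iI}^\#$ it suffices, by \eqref{eq:UI_i}, to show $x_i\in wP_{iI}^{\#\#}$ for every $w\in W_I$; that is, $w^{-1}x_i\in P_{iI}^{\#\#}$ for every $w\in W_I$.

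First I would observe that, since $w\in W_I$ and $W_I$ acts on $V_{iI}$ (as recorded in Remark~\ref{rm:para}), we have $w^{-1}x_i\in V_{iI}$. Next, from $x_i\in U_i^\#=\bigcap_{v\in W}vP_i^{\#\#}$ we get in particular $x_i\in wP_i^{\#\#}$, i.e. $w^{-1}x_i\in P_i^{\#\#}$. Combining these two facts, $w^{-1}x_i\in P_i^{\#\#}\cap V_{iI}$, and Lemma~\ref{lem:P**} then gives $w^{-1}x_i\in P_{iI}^{\#\#}$, exactly what was needed. As $w\in W_I$ was arbitrary, $x_i\in\bigcap_{w\in W_I}wP_{iI}^{\#\#}=U_{iI}^\#$.

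For the ``furthermore'' clause, take $w=1$ in $U_{iI}^\#=\bigcap_{w\in W_I}wP_{iI}^{\#\#}$ to get $x_i\in U_{iI}^\#\subseteq P_{iI}^{\#\#}$. If $I$ is finite, then since $\mathscr{C}_I$ is itself a Coxeter datum (Remark~\ref{rm:para}) condition (C5) for $\mathscr{C}_I$ gives $0\notin\PLC(\Pi_{iI})$; equivalently, this is inherited from (C5) of $\mathscr{C}$ by padding coefficients with zeros. Hence Lemma~\ref{lem:ex&cl} applies and yields $P_{iI}^{\#\#}=P_{iI}$, so $x_i\in P_{iI}$, completing the argument.

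I do not expect a genuine obstacle here: the only points requiring a moment's care are that $W_I$ preserves $V_{iI}$ (so that the coset translates $w^{-1}x_i$ stay inside $V_{iI}$, which is what makes Lemma~\ref{lem:P**} applicable) and that the hypothesis $0\notin\PLC(\Pi_{iI})$ of Lemma~\ref{lem:ex&cl} is indeed available from (C5). Everything else is a direct unravelling of the definitions in Notation~\ref{def:tits} and the displayed identities \eqref{eq:cone} and \eqref{eq:UI_i}.
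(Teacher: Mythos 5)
Your proof is correct and is essentially the paper's own argument: the paper writes the same reasoning as a chain of set inclusions ($U_i^\#\cap V_{iI}\subseteq\bigcap_{w\in W_I}w(P_i^{\#\#}\cap V_{iI})\subseteq\bigcap_{w\in W_I}wP_{iI}^{\#\#}=U_{iI}^\#$, using that $W_I$ preserves $V_{iI}$ and Lemma~\ref{lem:P**}), and then invokes Lemma~\ref{lem:ex&cl} for the finite case, exactly as you do element-wise. No differences of substance.
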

\begin{proof}
Observe that for each $i\in \{1, 2\}$, 
\begin{align*}
x_i &\in U_i^\#\cap V_{iI}\\
    &\subseteq\bigcap\limits_{w\in W_{I}}(w P_i^{\#\#}\cap V_{iI})\\
    &=\bigcap\limits_{w\in W_{I}}w\ ( P_i^{\#\#}\cap V_{iI}) \ &\text{(since
$W_{I}$ preserves $V_{iI}$)}\\
    &\subseteq\bigcap\limits_{w\in W_{I}} w P_{iI}^{\#\#} =U_{iI}^\#&\text{(by Lemma
\ref{lem:P**})}.
\end{align*}
Finally, it follows from Lemma \ref{lem:ex&cl} that 
$x_i\in \bigcap_{w\in W_{I}}w P_{iI}\subseteq P_{iI}$ whenever $I$ is finite.
\end{proof}

Now we are ready for the central result of this section:
\begin{theorem}
\label{th:Tits}
$\langle v_1, v_2\rangle \leq 0$, for all $v_1\in U_1^\#$ and $v_2\in U_2^\#$.
\end{theorem}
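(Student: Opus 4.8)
The plan is to reduce to a finite generating set, dispatch the finite-group case, prove the statement for a ``fundamental-domain'' family of vectors by a one-line computation, and then obtain the general case by a density-and-continuity argument whose density input carries all the content.

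\emph{Reductions and the easy case.} First, by (C6) there is a finite $I\subseteq S$ with $v_1\in V_{1I}$ and $v_2\in V_{2I}$ (take $I=I_1\cup I_2$ for individual finite supports); Lemma~\ref{lem:P_i} then gives $v_1\in U_{1I}^\#$ and $v_2\in U_{2I}^\#$ for $\mathscr{C}_I$, while $\langle v_1,v_2\rangle$ is computed inside $\mathscr{C}_I$, so it suffices to prove the theorem when $S$ is finite. If moreover $W$ is finite, then the longest element $w_0$ sends every $\alpha_s$ into $\Phi_1^-$, so $w_0P_1\subseteq-P_1$, whence $v\in U_1^\#$ forces $w_0v\in P_1\cap(-P_1)=\{0\}$ by (C5); thus $U_1^\#=U_2^\#=\{0\}$ and there is nothing to prove. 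So assume $S$ finite and $W$ infinite; then $P_i^{\#\#}=P_i$ by Lemma~\ref{lem:ex&cl}, so $U_i^\#=\bigcap_{w\in W}wP_i$ by (\ref{eq:cone}), and in particular $U_i^\#\subseteq\PLC(\Pi_i)\cup\{0\}$, $U_i^\#$ is $W$-invariant and closed, and $P_i$ is a pointed closed cone (again by (C5)). Now set $Z_1^0=\{x\in P_1:\langle x,\beta_s\rangle\le0\ \text{for all }s\in S\}$. For any $w\in W$, any $d\in Z_1^0$, and any $v_2\in U_2^\#$, Lemma~\ref{lem:eqf} gives $\langle wd,v_2\rangle=\langle d,w^{-1}v_2\rangle$; writing $w^{-1}v_2=\sum_s b_s\beta_s$ with all $b_s\ge0$ (possible since $w^{-1}v_2\in U_2^\#\subseteq\PLC(\Pi_2)\cup\{0\}$), we obtain $\langle wd,v_2\rangle=\sum_s b_s\langle d,\beta_s\rangle\le0$. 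Hence $\langle x,v_2\rangle\le0$ for every $x$ in $W\!\cdot\!Z_1^0=\bigcup_{w\in W}wZ_1^0$ and every $v_2\in U_2^\#$; this is the ``special-element'' version, in the spirit of \cite[Proposition~1.2]{MX82} and \cite[Proposition~3.4]{HT97}.

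\emph{The general case.} It remains to pass from $W\!\cdot\!Z_1^0$ to all of $U_1^\#$. For fixed $v_2\in U_2^\#$ the map $x\mapsto\langle x,v_2\rangle$ is a linear functional on the finite-dimensional space $V_1$, hence continuous, and we have just shown it is $\le0$ on $W\!\cdot\!Z_1^0$; so it suffices to prove that $W\!\cdot\!Z_1^0$ is dense in $U_1^\#$, i.e. that $\bigcap_{w\in W}wP_1\subseteq\overline{\bigcup_{w\in W}wZ_1^0}$. Granting this, every $v_1\in U_1^\#$ is a limit of elements of $W\!\cdot\!Z_1^0$, so $\langle v_1,v_2\rangle\le0$, which completes the proof. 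This density statement is the crux, and the main obstacle: it is the analogue, in the present non-orthogonal and possibly non-free setting, of the classical identification of the closure of the imaginary cone of $W$ with $\bigcap_{w\in W}wP$. I would prove it by adapting the classical argument: given $v_1\in U_1^\#$, one approximates $v_1$ by vectors that can be moved into $Z_1^0$ by a single element of $W$, exploiting that $S$ is finite, that $P_1$ is pointed and closed (so a linear functional $f$ positive on $\Pi_1$ exists and has compact sublevel sets on $P_1$), that $0\notin\PLC(\Pi_1)$, and that each step $x\mapsto r_sx$ with $\langle x,\beta_s\rangle>0$ strictly decreases $f$; the lack of linear independence of $\Pi_1$ causes no difficulty because all the positivity required is encoded in the cone $P_1$ rather than in individual coefficients of simple roots. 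Everything else is routine once the reductions above are in place.
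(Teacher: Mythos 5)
Your reductions to a finite $S$ and the disposal of the finite-$W$ case are sound, and your ``special-element'' computation is correct: for $d\in Z_1^0=\{x\in P_1:\langle x,\beta_s\rangle\le 0\ \forall s\}$ and $v_2\in U_2^\#$ one gets $\langle wd,v_2\rangle=\langle d,w^{-1}v_2\rangle\le 0$ from Lemma~\ref{lem:eqf} and the $W$-invariance of $U_2^\#$. But that part is precisely the already-known statement of \cite{MX82} and \cite{HT97} that the theorem is explicitly advertised as generalizing; the entire content of Theorem~\ref{th:Tits} lies in passing from $W\!\cdot\!Z_1^0$ to all of $U_1^\#$, and that is exactly the step you do not prove. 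Your proposed bridge is the density claim $\bigcap_{w\in W}wP_1\subseteq\overline{\bigcup_{w\in W}wZ_1^0}$, which you yourself flag as ``the crux'' and then dispatch with ``I would prove it by adapting the classical argument.'' That is not an adaptation of a routine lemma: it is (the non-orthogonal, possibly non-free analogue of) the identification of the closure of the imaginary cone with $\bigcap_w wP_1$, a substantial theorem whose classical proof relies on limit-root machinery and is not available off the shelf in the present setting, where $\Pi_1$ need not be linearly independent and the two spaces $V_1,V_2$ are linked only by a possibly degenerate pairing. The sketch you give (``approximate $v_1$ by vectors that can be moved into $Z_1^0$ by a single element of $W$'') also cannot work as stated for boundary points of $U_1^\#$ such as limits of root rays, which no single group element moves into $Z_1^0$; only a genuine limiting argument over unboundedly long words could succeed, and none is supplied. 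So the proof has a real gap at its central step.

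For comparison, the paper avoids density altogether and argues by a direct descent: assuming $\langle v_1,v_2\rangle=1$, it localizes to a finite $K\subseteq S$ via Lemma~\ref{lem:P_i}, chooses functionals $f_1,f_2$ exceeding $1$ on $\Pi_{1K},\Pi_{2K}$ (Lemma~\ref{lem:ex&cl}), and shows that from any $x\in U_{2K}^\#$ witnessing $\langle z,x\rangle\ge 1$ with $f_1(z)\le f_1(v_1)$ one can apply a single simple reflection $r_{s_0}$ to produce a new witness whose $f_2$-value drops by a fixed $\epsilon=2/(|K|f_1(v_1))$; finitely many steps then force $f_2$ to become negative on an element of $P_{2K}$, a contradiction. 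If you want to salvage your architecture, you would need to either prove the density statement in this generality (a paper-length task) or replace it with a quantitative descent of the above kind.
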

\begin{proof}
Suppose for a contradiction that there are $v_1\in U_1^\#$ and $v_2\in U_2^\#$ 
with $\langle v_1, v_2\rangle >0$. Replace $v_2$ by a positive scalar
multiple of itself if necessary, we may assume that $\langle v_1, v_2 \rangle
=1$. Let $I$ be a finite subset of $S$ with $v_1\in V_{1I}$, and let
$J$ be a finite subset of $S$ with $v_2\in V_{2J}$. Next, set 
$K=I\cup J$.  
Then Lemma~\ref{lem:P_i} yields that $v_1\in U_{1K}^\#$, and $v_2\in U_{2K}^\#$.
Since $K$ is finite, it follows from Lemma~\ref{lem:ex&cl} 
that there are linear functionals $f_1\in \hm(V_{1K}, \R)$ and $f_2\in \hm(V_{2K}, \R)$ 
such that $f_1(\alpha)>1$ for all $\alpha\in \Pi_{1K}$, and $f_2(\beta)>1$ for all $\beta\in \Pi_{2K}$. 
Now set 
\begin{align*}
\mathscr{A} =\{\,& x\in U_{2K}^\#\mid \text{$f_2(x)\leq f_2(v_2)$ and 
$\langle z, x\rangle \geq 1$}\\ &\qquad\qquad\text{for some $z\in U_{1K}^\#$
with $f_1(z)\leq f_1(v_1)$}\,\}.
\end{align*}
Observe that $\mathscr{A} \neq \emptyset$, since  $v_2\in \mathscr{A}$.

Next, put $\epsilon =\frac{2}{|K|f_1(v_1)}$. We claim that for any given $x\in
\mathscr{A}$, there exists $y\in \mathscr{A}$ with $f_2(y)\leq f_2(x)-\epsilon$.
If we could prove this claim, then starting with $x=v_2$, a finite repetition 
of this process will produce some 
$y\in \mathscr{A}\subseteq U_{2K}^\#\subseteq P_{2K}$ with $f_2(y)$ 
being negative, contradicting the fact that 
$f_2(y)\in  f_2(P_{2K})\subseteq (0, \infty)$. Thus all that remains to do is 
to prove the above claim. Given arbitrary $x\in \mathscr{A}$, let 
$z=\sum_{\alpha\in \Pi_{1K}}\lambda_\alpha \alpha\in U_{1K}^\#$ be such
that $\langle z, x\rangle \geq 1$, $f_1(z)\leq f_1(v_1)$, and 
$\lambda_\alpha \geq 0$ for all $\alpha\in \Pi_{1K}$. Note that these
conditions imply that 
$\sum_{\alpha\in \Pi_{1K}}\lambda_\alpha \langle \alpha, x\rangle \geq 1$, 
which in turn implies that 
$\lambda_{\alpha_{s_0}} \langle \alpha_{s_0}, x\rangle \geq \frac{1}{|K|}$ 
for some $s_0\in K$. Then
$$
\langle \alpha_{s_0}, x\rangle \geq \frac{1}{\lambda_{\alpha_{s_0}}|K|}\geq
\frac{1}{f_1(v_1)|K|}=\frac{\epsilon}{2},
$$
since $\lambda_{\alpha_{s_0}}\leq f_1(z)\leq f_1(v_1)$. 
Set $y=r_{s_0}x$. Observe that (\ref{eq:UI_i})
indicates that $U_{2K}^\#$ is $W_{K}$-invariant, and given that $r_{s_0}\in
W_{K}$, we have $y\in U_{2K}^\#$. Moreover, 
$$
f_2(y)=f_2(x)-2\langle \alpha_{s_0}, x\rangle f_2(\beta_{s_0})<f_2(x)-\epsilon<f_2(v_2).
$$
Thus to establish our claim, we only need to show that $y\in \mathscr{A}$, and this in turn amounts to
finding some $t\in U_{1K}^\#$
such that $\langle t, y\rangle \geq 1$, and $f_1(t)\leq f_1(v_1)$. First,
suppose that $\langle z, \beta_{s_0}\rangle \geq 0$. Put $t=r_{s_0}z$.
Since $U_{1K}^\#$ is $W_{K}$-invariant and $z\in U_{1K}^\#$, it follows that
$t\in U_{1K}^\#$. Moreover, 
$$
\langle t, y\rangle =\langle r_{s_0}z, r_{s_0}x\rangle =\langle
z, x\rangle\geq 1
$$
and 
$$
f_1(t)=f_1(z)-2\langle z, \beta_{s_0}\rangle f_1(\alpha_{s_0})\leq f_1(z)\leq f_1(v_1),
$$
thus proving $y\in \mathscr{A}$ in the case $\langle z, \beta_{s_0}\rangle \geq
0$. Next, suppose that $\langle z, \beta_{s_0}\rangle <0$. Then $t=z$ will do,
indeed, 
\begin{align*}
\langle z, y\rangle &=\langle z, x-2\langle \alpha_{s_0}, x\rangle \beta_{s_0}\rangle\\
                    &=\langle z, x\rangle -2\langle \alpha_{s_0}, x\rangle \langle z,
\beta_{s_0}\rangle\\
                    &\geq \langle z, x\rangle \\
                    &\geq 1, 
\end{align*}
and by our construction, $z\in U_{1K}^\#$ and $f_1(z)\leq f_1(v_1)$, thus proving
$y\in \mathscr{A}$ in the case $\langle z, \beta_{s_0}\rangle <0$ too. This
completes the proof of the claim, and hence the theorem follows.
\end{proof}

\section{Acknowledgments}
A few results presented in this paper are taken from the author's PhD
thesis~\cite{FU0} and the author wishes to thank Prof.~R.~B.~Howlett for all
his help and encouragement throughout the author's PhD candidature. The author
also wishes to thank Prof.~G.~I.~Lehrer and Prof. R.~Zhang  for supporting this work.
Due gratitude must also be paid to Prof. W.~A.~Casselman for his helpful comments
and suggestions.

\bibliographystyle{amsplain}

\providecommand{\bysame}{\leavevmode\hbox to3em{\hrulefill}\thinspace}
\providecommand{\MR}{\relax\ifhmode\unskip\space\fi MR }
\providecommand{\MRhref}[2]{%
  \href{http://www.ams.org/mathscinet-getitem?mr=#1}{#2}
}
\providecommand{\href}[2]{#2}

\end{document}